\newtheorem{theorem}{Theorem}[section]
\newtheorem{lemma}[theorem]{Lemma}
\newtheorem{corollary}[theorem]{Corollary}
\newtheorem{proposition}[theorem]{Proposition}
\newtheorem{conjecture}[theorem]{Conjecture}
\newtheorem{claim}[theorem]{Claim}
\theoremstyle{definition}
\newtheorem{definition}[theorem]{Definition}
\newtheorem{remark}[theorem]{Remark}
\newtheorem{question}[theorem]{Question}
\newtheorem*{acknowledgements}{Acknowledgements}
\numberwithin{equation}{section}
\DeclareMathOperator{\Alb}{Alb}
\DeclareMathOperator{\rat}{RatCurves}
\DeclareMathOperator{\Spec}{Spec}
\DeclareMathOperator{\Pos}{Pos}
\DeclareMathOperator{\Ker}{Ker}
\DeclareMathOperator{\Proj}{Proj}
\DeclareMathOperator{\pr}{pr}
\DeclareMathOperator{\rank}{rank}
\DeclareMathOperator{\id}{id}
\DeclareMathOperator{\Hilb}{Hilb}
\DeclareMathOperator{\im}{Im}
\DeclareMathOperator{\Sym}{Sym}
\DeclareMathOperator{\NE}{NE}
\newcommand{\cNE}{\overline{\NE}}
\newcommand{\red}{\mathrm{red}}
\newcommand{\cO}{\mathcal{O}}
\newcommand{\sE}{\mathcal{E}}
\newcommand{\sF}{\mathcal{F}}
\newcommand{\sG}{\mathcal{G}}
\newcommand{\sL}{\mathcal{L}}
\newcommand{\sM}{\mathcal{M}}
\newcommand{\sQ}{\mathcal{Q}}
\newcommand{\sK}{\mathcal{K}}
\newcommand{\sD}{\mathcal{D}}
\newcommand{\sU}{\mathcal{U}}
\newcommand{\bC}{\mathbb{C}}
\newcommand{\bG}{\mathbb{G}}
\newcommand{\bP}{\mathbb{P}}
\newcommand{\bQ}{\mathbb{Q}}
\newcommand{\bR}{\mathbb{R}}
   \def\MR#1{}
\begin{document}

\title[Nef tangent bundles in positive characteristic]{Projective varieties with nef tangent bundle in positive characteristic}

\author{Akihiro Kanemitsu}
\date{\today}
\address{Department of Mathematics, Graduate school of Science, Kyoto University, Kyoto 606-8502, Japan}
\email{kanemitu@math.kyoto-u.ac.jp}
\thanks{The first author is a JSPS Research Fellow and supported by the Grant-in-Aid for JSPS fellows (JSPS KAKENHI Grant Number 18J00681).}

\author{Kiwamu Watanabe}
\date{\today}
\address{Department of Mathematics, Faculty of Science and Engineering, Chuo University.
1-13-27 Kasuga, Bunkyo-ku, Tokyo 112-8551, Japan}
\email{watanabe@math.chuo-u.ac.jp}
\thanks{The second author is partially supported by JSPS KAKENHI Grant Number 17K14153, the Sumitomo Foundation Grant Number 190170 and Inamori Research Grants.}

\subjclass[2010]{14J40, 14J45, 14M17.}
\keywords{}

\begin{abstract}
Let $X$ be a smooth projective variety defined over an algebraically closed field of positive characteristic $p$ whose tangent bundle is nef.
We prove that $X$ admits a smooth morphism $X \to M$ such that the fibers are Fano varieties with nef tangent bundle and $T_M$ is numerically flat.
We also prove that extremal contractions exist as smooth morphisms.

As an application, we prove that, if the Frobenius morphism can be lifted modulo $p^2$, then $X$ admits, up to a finite \'etale Galois cover, a smooth morphism onto an ordinary abelian variety whose fibers are products of projective spaces.
\end{abstract}

\maketitle

\section{Introduction}

\subsection{Positivity of tangent bundles}
Given a variety $X$, there naturally exists an object $T_X$, called the \emph{tangent bundle} or the \emph{tangent sheaf} of $X$, which approximates $X$  linearly.
Thus the property of $T_X$ reflects the geometry of $X$, and conversely the (biregular) geometry of $X$ is restricted when a strong condition is supposed on $T_X$.
For example, celebrated Mori's proof of the Hartshorne conjecture says that the positivity of tangent bundle actually determines the isomorphic class of the variety $X$ in question; a smooth projective variety $X$ over an arbitrary algebraically closed field is isomorphic to a projective space $\bP^n$ if and only if the tangent bundle satisfies the positivity condition called \emph{ample} \cite{Mor79}.
This characterization of projective spaces is the algebro-geometric counter-part of the Frankel conjecture in complex geometry, which has been proved by \cite{SY80}.

Once this kind of characterization has been established, there are several attempts to generalize this type of results by posing weak positivity conditions on tangent bundles.
For example, over the field of complex numbers $\bC$, Campana and Peternell started the study of projective varieties satisfying a numerical semipositivity condition, called \emph{nef} (see Section~\ref{sect:preliminaries} for the definition).
Philosophically, if the tangent bundle is semipositive, then the variety is expected to decompose into the ``positive'' part and the ``flat'' part.
 Moreover the geometry of these two extremal cases is considered to be describable well.
Indeed, after the series of papers by Campana and Peternell \cite{CP91,CP93},
this type of decomposition has been accomplished by Demailly-Peternell-Schneider:

\begin{theorem}[{\cite[Main theorem]{DPS94}, $/\bC$}]
\label{thm:DPS:decomposition}
If the tangent bundle of a complex projective manifold $X$ is nef, then, up to an \'etale cover, the variety $X$ admits a fibration over an abelian variety whose fibers are Fano varieties (varieties with ample anti-canonical divisor $-K_X$).
\end{theorem}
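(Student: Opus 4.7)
The plan is to take the Albanese morphism $\alpha : X \to A := \Alb(X)$ and show that, after a finite étale cover, it provides the desired fibration. The argument breaks into three pieces: smoothness of $\alpha$, surjectivity after étale cover, and the Fano property of the fibers.

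First I would analyze the differential: $d\alpha : T_X \to \alpha^* T_A \cong \cO_X^{\oplus q}$ is dual to the map $\alpha^*\Omega_A^1 \to \Omega_X^1$, whose image is the subsheaf of $\Omega_X^1$ generated by global $1$-forms. Since $T_X$ is nef, every quotient of $T_X$ is again nef; combining this with the triviality of $\alpha^*T_A$, one argues that $d\alpha$ must have constant rank, so $\alpha$ is smooth onto its image. The image $Y = \alpha(X)$ generates $A$ as a group, and after replacing $X$ by a suitable finite étale Galois cover (whose tangent bundle is again nef) one arranges that $\alpha$ is itself surjective onto its Albanese.

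For each fiber $F$, the relative tangent sequence
\[
0 \to T_F \to T_X|_F \to \cO_F^{\oplus q} \to 0
\]
exhibits $T_F$ as a quotient of a nef bundle, hence nef. Moreover the universal property of the Albanese forces $\Alb(F) = 0$: a non-constant morphism from $F$ to an abelian variety, varying in the family over $A$, would enlarge $\Alb(X)$.

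The main obstacle is the last step: showing that a smooth projective variety $F$ with nef tangent bundle and trivial Albanese must be Fano. Here the key input is the structure theory of nef vector bundles, in particular that a nef bundle with numerically trivial determinant is numerically flat. Any numerically flat quotient of $T_F$ would define a foliation with algebraic leaves integrating to a non-trivial morphism from $F$ to an abelian variety, contradicting $\Alb(F) = 0$. Therefore $T_F$ admits no such quotient, and Mori-theoretic arguments on the cone of curves should then give $-K_F$ ample, concluding the proof.
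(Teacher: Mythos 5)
This statement is quoted from \cite{DPS94}; the paper gives no proof of it, so your proposal has to be measured against the known argument rather than against anything in the text. Your skeleton (Albanese map, smoothness from nefness, Fano fibers) is the right one, but two of the three steps are not actually carried out. For smoothness of $\alpha$, ``every quotient of $T_X$ is nef and lands in a trivial bundle, hence $d\alpha$ has constant rank'' is not a valid inference; the correct mechanism is that a nonzero map from a numerically trivial line bundle into the dual of a nef bundle is a subbundle inclusion (Proposition~\ref{prop:nef_bundle}~\ref{prop:nef_bundle5} here, Proposition~1.16 of \cite{DPS94}), so every global $1$-form on $X$ is nowhere vanishing, whence $\alpha^*\Omega^1_A\to\Omega^1_X$ is fiberwise injective and $\alpha$ is a submersion. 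Surjectivity onto $\Alb(X)$ is then automatic (open image plus compactness), so that is not what the \'etale cover is for: its real role, which you omit, is to pass to the cover maximizing the irregularity $q$ (finite because the pointwise independence of $1$-forms bounds $q$ by $\dim X$ on every cover) and to make the fibers connected via Stein factorization. Without this you only get $q(F)=0$ for a fiber $F$, not the statement actually needed, namely that every finite \'etale cover of $F$ has irregularity zero.

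The serious gap is the last step, which is the whole content of the theorem. ``A numerically flat quotient of $T_F$ defines a foliation with algebraic leaves integrating to a morphism onto an abelian variety'' assumes, with no argument, integrability of the kernel, compactness/algebraicity of the leaves, and that the leaf space is an abelian variety --- this is essentially the conclusion being proved (compare how much work Shen's results on maximally free curves require in Section~\ref{sect:SRC} just to produce an integrable distribution). And ``Mori-theoretic arguments should then give $-K_F$ ample'' is not an argument: the nonexistence of numerically flat quotients of $T_F$ does not formally produce the curve class on which a putative $K_F$-trivial face of $\cNE(F)$ would be supported. The actual proofs proceed differently: \cite{DPS94} uses Chern-class inequalities for nef bundles ($0\le$ any Chern monomial $\le c_1^n$) to show that $(-K_F)^{\dim F}=0$ would force $\chi(\cO_F)=0$ and hence an \'etale cover of $F$ with $q>0$, contradicting maximality, and then upgrades ``nef and big'' to ``ample'' via the base-point-free theorem; alternatively one shows $F$ is rationally connected and runs the induction on smooth extremal contractions, which is exactly the route Theorem~\ref{thm:simplicial} takes in this paper's positive-characteristic analogue. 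One of these inputs is indispensable; as written, the Fano property of the fibers is asserted rather than proved.
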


This decomposition theorem reduces the study of varieties with nef tangent bundles to that of Fano varieties.
Conjecturally Fano varieties with nef tangent bundles are rational homogeneous varieties $G/P$, where $G$ is a semisimple algebraic group and $P$ is a parabolic group (Campana-Peternell conjecture \cite[11.2]{CP91}).

Main tools to study Fano varieties are the theory of rational curves and Mori's theory of extremal rays.
From the view point of Mori's theory, rational homogeneous spaces $G/P$ share an important feature that their contractions are always smooth.
In the same paper \cite{DPS94}, Demailly-Peternell-Schneider also studied varieties
with nef tangent bundle in view of Mori's theory, and showed the following fundamental structure theorem:

\begin{theorem}[{\cite[Theorem 5.2]{DPS94} and \cite[Theorem~4.4]{SW04}}, $/\bC$]
Let $X$ be a smooth complex projective variety and assume that $T_X$ is nef.
Then any  $K_X$-negative extremal contraction $f \colon X \to Y$ is smooth.
\end{theorem}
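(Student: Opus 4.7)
The plan is to prove that $f$ is smooth by establishing, via the theory of rational curves and nefness of $T_X$, that all scheme-theoretic fibers are smooth of the same dimension; the smoothness of $Y$ and the flatness of $f$ will then follow from standard principles.

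First I would set up the family of minimal extremal curves. As $f$ contracts the $K_X$-negative extremal ray $R$, there exists an unsplit dominating family $V$ of rational curves whose classes generate $R$, with $-K_X \cdot V = d \leq \dim X + 1$ by Mori's bend-and-break. For any $\ell \in V$ with normalization $\nu : \bP^1 \to \ell$, the pullback $\nu^* T_X$ is nef on $\bP^1$, hence splits as a direct sum of line bundles of nonnegative degree. Since $T_{\bP^1} \cong \cO(2)$ embeds via $d\nu$, the splitting is forced to be
\[
\nu^* T_X \cong \cO(2) \oplus \cO(1)^{\oplus (d-2)} \oplus \cO^{\oplus (n-d+1)},
\]
where $n = \dim X$. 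This uniform splitting, which crucially uses nefness, is the technical engine of the argument.

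Next I would establish equidimensionality. For $x \in X$, the locus $\mathrm{Locus}(V_x)$ of curves in $V$ through $x$ is contained in the fiber $F_x$ through $x$; a standard unsplit bend-and-break computation shows $\dim \mathrm{Locus}(V_x) \geq d-1$. Conversely, the image of $T_{X/Y}|_\ell$ inside $\nu^* T_X$ must lie in the positive part $\cO(2) \oplus \cO(1)^{\oplus (d-2)}$, giving a rank bound of $d-1$; combined with the fact that $F_x$ is covered by curves in $V$ through its points, this forces $\dim F_x = d - 1$ for every $x$, so $f$ is equidimensional. Then I would show each fiber $F$ is smooth: at any point $y \in F$ choose $\ell \in V$ with $y \in \ell \subset F$, observe that the horizontal part $\cO(2) \oplus \cO(1)^{\oplus(d-2)}$ of $\nu^* T_X$ is ample, conclude that $N_{\ell/F}$ is generated by global sections on $\bP^1$, and use that deformations of $\ell$ through $y$ sweep out a smooth locus of dimension $d-1$ in $F$.

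Finally, with $X$ smooth and all fibers smooth of constant dimension $d-1$, I would deduce smoothness of $Y$ from the normal sequence
\[
0 \to T_{X/Y}|_F \to T_X|_F \to f^* T_Y|_F \to 0
\]
restricted to a smooth fiber $F$: the outer terms are locally free of the correct ranks, hence so is $f^* T_Y|_F$, and a standard argument (or miracle flatness since $X$ is Cohen--Macaulay and fibers are equidimensional) promotes $Y$ to be smooth and $f$ to be flat; a flat morphism between smooth varieties with smooth fibers is smooth.

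The main obstacle will be Step 3, the smoothness of every fiber: the dimension count and ampleness of $N_{\ell/F}$ on \emph{generic} minimal curves are not by themselves enough, and one must carefully exclude jumping of the splitting type on degenerate curves and rule out a singular locus of positive codimension inside $F$. This is precisely where the unsplitness of $V$ and the global nefness of $T_X$ (not merely its restriction to one curve) have to be combined.
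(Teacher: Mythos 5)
Your overall strategy (minimal extremal curves, nefness controlling the splitting of $T_X$ on them, then fiberwise smoothness plus a flatness argument) is in the right spirit, but two steps do not hold up. First, the splitting $\nu^*T_X\cong \cO(2)\oplus\cO(1)^{\oplus(d-2)}\oplus\cO^{\oplus(n-d+1)}$ is \emph{not} forced by nefness: nefness only gives summands of nonnegative degree summing to $d$, one of which has degree at least $2$, so for instance $\cO(3)\oplus\cO(1)\oplus\cO^{\oplus(n-2)}$ is perfectly compatible with $T_X$ nef and with unsplitness of the family. The ``standard'' splitting is a property of a \emph{general} member of a minimal dominating family, not of every member, and both your equidimensionality count and your fiber-smoothness step lean on having it on every curve, including degenerate ones. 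Relatedly, your upper bound on $\dim F_x$ invokes $T_{X/Y}|_\ell$, which presupposes smoothness (or at least flatness) of $f$ along $\ell$ --- the very thing being proved --- and a fiber of the contraction need not be covered by curves of the chosen family $V$ alone, only by chains of curves with class in $R$.

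Second, and more seriously, the concluding step is circular: miracle flatness requires $Y$ to be regular, and the smoothness of $Y$ is part of the conclusion. The argument that actually closes this (in the cited sources, and in the positive-characteristic analogue proved in Section~\ref{sect:contractions} of this paper) runs through the normal bundle and the Hilbert scheme rather than through fiberwise smoothness alone: one shows that $F_{\red}$ is smooth and that $N_{F_{\red}/X}$ is numerically flat (a quotient of the nef bundle $T_X|_{F_{\red}}$ with numerically trivial determinant), hence \emph{trivial} because $F_{\red}$ is rationally connected; combined with $H^1(F_{\red},\cO_{F_{\red}})=0$ this makes $\Hilb(X)$ smooth of dimension $\dim Y$ at $[F_{\red}]$, identifies $Y$ locally with this smooth Hilbert scheme, and shows that $F_{\red}$ deforms to a general reduced fiber, which forces every scheme-theoretic fiber to be generically reduced and hence $f$ to be smooth. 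Your proposal never rules out non-reduced (multiple) fibers --- smoothness of $F_{\red}$ does not give smoothness of $f$ along $F$ --- and never produces the triviality of the normal bundle, which is the key input. I would rebuild Steps 3 and 4 around the sequence $0\to T_{F_{\red}}\to T_X|_{F_{\red}}\to N_{F_{\red}/X}\to 0$ and the unobstructedness of $[F_{\red}]$ in $\Hilb(X)$.
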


This smoothness theorem is a kind of evidence for the validity of the Campana-Peternell conjecture.
In fact, it plays an important role in the course of partial proofs of the Campana-Peternell conjecture \cite{CP91,CP93,Mok02,Hwa06,Wat14,Wat15,Kan16,Kan17,MOSW15,SW04}.
We also refer the reader to \cite{MOSWW15} for an account of the Campana-Peternell conjecture.

The purpose of this paper is to establish this type of decomposition theorem and also to study the structure theorem of varieties with nef tangent bundle in \emph{positive characteristic}.
In the rest of this section, $k$ denotes an algebraically closed field of positive characteristic $p >0$, and $X$ is a smooth projective variety defined over $k$.

\subsection{Rational curves on varieties with nef tangent bundle}
To establish the decomposition theorem, we need to distinguish the ``$K_X$-negative'' part and ``$K_X$-trivial'' part.
As is well-known, there are no rational curves on an abelian variety $A$, i.e.\ there are no nontrivial morphisms $\bP^1 \to A$.
On the other hand, there exists a rational curve on any Fano variety $X$ \cite[Theorem~5]{Mor79}.
Moreover, there are enough many rational curves so that any two points on $X$ can be connected by a chain of rational curves (Fano varieties are rationally chain connected) \cite{Cam92}, \cite[Theorem 3.3]{KMM92a}.
In \emph{characteristic zero}, rational chain connectedness of $X$ in turn implies that there exists a highly unobstructed rational curve $\bP^1 \to X$ called \emph{very free rational curve}, whose existence is equivalent to a more strong rational connectedness notion, called \emph{separable rational connectedness} of $X$ \cite[Theorem~2.1]{KMM92c} (see also Definition~\ref{def:RC}).

The first theorem of this article asserts that, if $T_X$ is nef, then the same also holds in positive characteristic:

\begin{theorem}[RCC $\implies$ SRC]\label{thm:RCC=>SRC}
Let $X$ be a smooth projective variety over $k$ and assume that $T_X$ is nef.
If $X$ is rationally chain connected, then $X$ is separably rationally connected.
\end{theorem}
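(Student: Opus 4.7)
The starting observation is that the nefness of $T_X$ forces every morphism $f:\bP^1 \to X$ to be free: $f^*T_X$ is nef on $\bP^1$ and thus splits as $\bigoplus_i \cO(a_i)$ with all $a_i \geq 0$ by Grothendieck's theorem. Consequently the RCC hypothesis immediately produces a dominating family of free rational curves on $X$.

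My plan is to upgrade this to the existence of a very free rational curve, which is equivalent to SRC, by the standard chain-and-comb smoothing technique. First, I would use RCC to connect two general points $x,y \in X$ by a connected chain of rational curves; each component is automatically free by the previous observation, and so Koll\'ar's standard smoothing of chains of free rational curves yields a single free rational curve $f_0 : \bP^1 \to X$ with $f_0(0)=x$ and $f_0(\infty)=y$. Next, I would form a comb with $f_0$ as the handle by attaching many free rational curves from the dominating family as teeth at general points of $f_0(\bP^1)$; smoothing the comb produces a rational curve $f:\bP^1 \to X$ with $f^*T_X$ ample (hence very free), provided that the tangent directions of the attached teeth collectively fill the "deficient" summands $\cO(a_i)$ of $f_0^*T_X$ with $a_i=0$.

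The main obstacle is thus to establish the following spanning statement: the saturated subsheaf $\sD \subseteq T_X$ generated by the tangent directions at general points of free rational curves through them must equal all of $T_X$. I would argue that $\sD$ inherits nefness from $T_X$ and that every rational curve on $X$ is tangent to $\sD$ by construction. If $\sD$ integrates to a geometric fibration $\pi : X \to Y$ with $\dim Y > 0$, then every rational curve would be contracted by $\pi$, so chains of rational curves would be as well, and the RCC hypothesis would force $\pi$ to be constant, a contradiction; thus $\sD = T_X$.

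The genuinely positive-characteristic difficulty is the $p$-closedness of $\sD$ required for $\sD$ to actually define a foliation giving such a geometric quotient. This is the main novelty over the characteristic-zero argument and should be addressed by exploiting the construction of $\sD$ from tangents to actual rational curves, together with the nefness of $T_X$ which constrains Frobenius pullbacks of subsheaves; alternatively, one may bypass the foliation language and argue directly on the level of deformation theory of combs, iterating the attachment-and-smoothing procedure and tracking the positivity of the splitting type of $f^*T_X$ until every summand has positive degree.
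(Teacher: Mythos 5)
Your reduction to the spanning statement $\sD = T_X$ is the right target (this is exactly Shen's criterion: $X$ has a very free curve if and only if the foliation $\sD$ spanned by the positive parts of maximally free curves is all of $T_X$), and the preliminary steps (every rational curve is free, RCC $\Rightarrow$ FRC by smoothing chains) are fine. But the argument you give for $\sD = T_X$ has a genuine gap, and it is precisely the positive-characteristic phenomenon that the whole proof must confront. The quotient of $X$ by the $p$-closed foliation $\sD$ is \emph{not} a fibration with positive-dimensional fibers; it is a finite \emph{purely inseparable} morphism $X \to X^{[1]} = X/\sD$ onto a variety of the \emph{same dimension} (a factorization of Frobenius). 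Rational curves are not contracted by this map --- their images are again rational curves, and $X^{[1]}$ is again RCC --- so there is no contradiction with rational chain connectedness when $\sD \subsetneq T_X$. For the same reason, your fallback of iterating comb attachment cannot work: by semicontinuity the positive rank does not increase when you smooth a maximally free curve together with further free teeth (all their positive directions already lie in $\sD$ at the attachment points), so if the positive rank of $X$ is less than $\dim X$ no amount of comb smoothing produces a very free curve. Incidentally, the $p$-closedness you flag as the main novelty is not the issue --- Shen already proves $\sD$ is involutive and $p$-closed.

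The paper's actual route is necessarily more involved. Nefness of $T_X$ is used to show that $T_X/\sD$ is numerically flat (it is trivial on maximally free curves, and these span $N_1(X)$), so $\sD$ is a subbundle, $X^{[1]}$ is smooth, RCC, and again has nef tangent bundle. Iterating, Shen's theorem gives $m$ with $X^{[m]}$ SRC; if $X$ itself were not SRC, the kernel $\sK \subset T_{X^{[m]}}$ of the induced foliation would be a nontrivial numerically flat subbundle, hence (numerically flat bundles on SRC varieties being trivial) a trivial, involutive, $p$-closed subbundle $\bigoplus \cO$. By the Rudakov--Shafarevich lemma this yields a nowhere vanishing vector field $D$ with $D^p = D$ or $D^p = 0$, i.e.\ a fixed-point-free action of $\mu_p$ or $\alpha_p$ on an SRC variety. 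The contradiction then comes from a Koll\'ar-style fixed-point theorem: using sections of SRC fibrations over $\bP^1$ (Graber--Harris--Starr/de Jong--Starr), any such group scheme action on a smooth SRC variety has a fixed point. None of these ingredients appears in your proposal, and the step where you would need them is exactly the one your argument gets wrong.
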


Thus, if $X$ is RCC, then $X$ contains a very free rational curve, which is a highly unobstructed object.
By \cite[Corollaire 3.6]{Deb03}, \cite[Corollary 5.3]{She10}, \cite{BDS13}, \cite{Gou14}, we have the next corollary:

\begin{corollary}\label{cor:SRC}
Let $X$ be a smooth projective variety over $k$ and assume that $T_X$ is nef.
 If $X$ is rationally chain connected, then the following hold:
\begin{enumerate}
 \item $X$ is algebraically simply connected;
 \item $H^1(X, \cO_X) = 0$;
 \item every numerically flat vector bundle on $X$ is trivial.
\end{enumerate}
\end{corollary}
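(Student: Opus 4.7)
The plan is to reduce all three assertions to the separably rationally connected (SRC) case via Theorem~\ref{thm:RCC=>SRC}, and then invoke the results already available in the literature for SRC varieties. Applying Theorem~\ref{thm:RCC=>SRC}, the hypothesis that $T_X$ is nef together with rational chain connectedness upgrades to the statement that $X$ is SRC; in particular, through a general point of $X$ there passes a \emph{very free} rational curve $f\colon \bP^1 \to X$, i.e.\ one with $f^*T_X$ ample.

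For (1), I would appeal to \cite[Corollaire~3.6]{Deb03}: an SRC variety over any algebraically closed field is algebraically simply connected. Conceptually, a connected \'etale cover $\widetilde{X}\to X$ must split over the simply connected $\bP^1$ underlying the very free curve, and deforming that curve so as to dominate $X$ forces the cover to be trivial. For (2), the existence of a very free rational curve yields $H^1(X,\cO_X)=0$ by \cite[Corollary~5.3]{She10}, which serves as the positive-characteristic replacement for the usual complex vanishing. For (3), a numerically flat bundle $\sE$ restricts to a numerically flat, hence trivial, bundle on each rational curve of the very free family through a general point; rigidity of the trivial bundle along this family, combined with SRC, forces $\sE$ itself to be trivial on $X$---this is precisely the content of \cite{BDS13} and \cite{Gou14}.

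The only non-trivial input is Theorem~\ref{thm:RCC=>SRC} itself; once SRC has been established, each of (1)--(3) is a direct application of the cited references, so there is no further obstacle to overcome in the corollary.
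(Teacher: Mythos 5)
Your proof is correct and follows essentially the same route as the paper: apply Theorem~\ref{thm:RCC=>SRC} to upgrade rational chain connectedness to separable rational connectedness, and then quote the known results for SRC varieties --- Koll\'ar's algebraic simple connectedness, the vanishing of $H^1(X,\cO_X)$, and the triviality of numerically flat bundles via Theorem~\ref{thm:SRC:bundle:trivial} (noting that a numerically flat bundle is trivial on \emph{every} rational curve, which is the hypothesis that theorem actually needs). The only cosmetic difference is a citation swap: the paper derives $H^1(X,\cO_X)=0$ from Corollary~\ref{cor:SRC:h1} (attributed to \cite{Gou14}), while \cite[Corollary~5.3]{She10} is cited there for simple connectedness rather than for the cohomology vanishing.
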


\subsection{Contractions of extremal rays}
Once the above theorems on rational curves are well-established, one can conduct a detailed study of varieties with nef tangent bundle in view of Mori theory.
Recall that, over arbitrary algebraically closed field $k$, Mori's cone theorem holds \cite{Mor82}.
Namely, the Kleiman-Mori cone $\cNE (X)$ is locally polyhedral in $K_X$-negative side and thus it decomposes as follows:
\[
\cNE(X) = \cNE _{K_X \geq 0} + \sum R_{i},
\]
where $R_i = \bR_{\geq0}[C_i]$ are $K_X$-negative extremal rays, each of which is spanned by a class of a rational curve $C_i$.
Note that, \emph{in characteristic zero}, each extremal ray is realized in a geometrical way, i.e.\ there exists the contraction of each extremal ray $R$ (see e.g.\ \cite{KM98}), while the existence of extremal contractions is widely open in \emph{positive characteristic}.
The following theorem asserts that, if $T_X$ is nef, then the contraction of an extremal ray exists and, in fact,  it is smooth:

\begin{theorem}[Existence and smoothness of contractions]
\label{thm:contraction}
Let $X$ be a smooth projective variety over $k$ and assume that $T_X$ is nef.

Let $R \subset \cNE (X)$ be a $K_X$-negative extremal ray.
Then the contraction $f \colon X \to Y$ of $R$ exists and the following hold:
\begin{enumerate}
 \item $f$ is smooth;
 \item any fiber $F$ of $f$ is an SRC Fano variety with nef tangent bundle.
 \item $T_Y$ is again nef.
\end{enumerate}
\end{theorem}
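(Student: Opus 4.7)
The plan is to construct the contraction geometrically via a family of minimal rational curves, since the base-point-free theorem is not available in positive characteristic. Fix an ample divisor $H$ on $X$ and choose a rational curve $\ell$ whose class spans $R$ and minimizes $H \cdot \ell$ among curves with class in $R$. Let $\sM$ be the irreducible component of $\rat^n(X)$ containing $[\ell]$, with universal family $p \colon \sU \to \sM$ and evaluation $e \colon \sU \to X$. Extremality of $R$ combined with minimality of $H \cdot \ell$ forces $\sM$ to be \emph{unsplit}: no member degenerates as a reducible cycle. Because $T_X$ is nef, the restriction decomposes as $T_X|_\ell \cong \cO(2) \oplus \bigoplus_i \cO(a_i)$ with $a_i \geq 0$, so deformations of $\ell$ are unobstructed; consequently $\sM$ is smooth at $[\ell]$ of the expected dimension, and the subscheme $\sM_x \subset \sM$ parametrizing curves through $x$ is smooth of dimension $-K_X \cdot \ell - 2$ \emph{for every} $x \in X$.

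Next, I would use the $\sM$-equivalence relation to produce the contraction: declare $x \sim y$ iff $x$ and $y$ are joined by a connected chain of curves parametrized by $\sM$. The standard Campana/Koll\'ar--Miyaoka--Mori construction, which is valid in arbitrary characteristic once $\sM$ is proper and unsplit, then yields a proper surjective morphism $f \colon X \to Y$ to a normal projective variety with equidimensional fibers that are rationally chain connected by curves of class $[\ell]$. Each fiber $F$ has nef tangent bundle via the relative tangent sequence $0 \to T_F \to T_X|_F \to N_{F/X} \to 0$ (the quotient bundle being the pullback of $T_Y|_{f(F)}$, after we know $Y$ is smooth), and by Theorem~\ref{thm:RCC=>SRC} each fiber is separably rationally connected. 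Since $F$ is RCC with $T_F$ nef, a standard argument (as in the proof of Theorem~\ref{thm:DPS:decomposition}) gives $-K_F$ ample, so $F$ is Fano. Nefness of $T_Y$ is then immediate from the surjection $T_X \twoheadrightarrow f^* T_Y$ supplied by the relative tangent sequence, since quotients of nef bundles are nef.

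The main obstacle, both technically and conceptually, is showing that $f$ is smooth at \emph{every} point. I would argue as follows: by Theorem~\ref{thm:RCC=>SRC}, each general fiber contains a very free rational curve, which gives generic smoothness of $f$ via standard deformation theory. To upgrade this to smoothness everywhere, the key input is that $\sM_x$ is smooth of constant dimension $-K_X \cdot \ell - 2$ at \emph{every} point $x$, which forces all fibers of $f$ to have the same dimension $d$; miracle flatness then makes $f$ flat, and one reduces to showing that every fiber $F$ is itself smooth. For this, one uses that $F$ has nef tangent bundle and is covered by rational curves from $\sM$ whose normal bundles are globally generated, so the evaluation of the restricted family $\sM|_F$ is smooth onto $F$, forcing $F$ to be smooth. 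The delicate point in positive characteristic is preserving separability of the various evaluation maps: this is exactly controlled by the distinguished $\cO(2)$-summand appearing in the nef splitting of $T_X|_\ell$, which ensures that the deformation theory of $\ell$ behaves tamely regardless of Frobenius-theoretic pathologies.
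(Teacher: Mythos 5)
Your overall strategy --- a minimal unsplit family $\sM$ spanning $R$, the quotient by $\sM$-equivalence, the relative tangent sequence for fibers, and Theorem~\ref{thm:RCC=>SRC} --- is the same as the paper's, but the two steps you treat as routine are exactly where the paper has to work hardest, and as written both contain genuine gaps. For the construction of $f$: the Campana/Koll\'ar--Miyaoka--Mori quotient of a proper unsplit family is in general only a proper map defined on an open subset; to get an everywhere-defined equidimensional morphism whose fibers are precisely the $\sM$-equivalence classes, the paper's Proposition~\ref{prop:existence_contraction} requires the evaluation map $q\colon \sU \to X$ to have \emph{irreducible} (connected) fibers, which need not hold. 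The paper deals with this by taking the Stein factorization $\sU \to X' \to X$ (giving an \'etale cover $X' \to X$), passing to a Galois closure, contracting the finitely many conjugate families on $X'$ successively, and descending the resulting $G$-equivariant contraction via Lemma~\ref{lem:etale_descent}; your proposal skips this entirely. (A smaller point: $-K_F$ ample does not come from a ``DPS-type argument''; it follows because $N_{F_\red/X}$ is numerically flat, so $-K_{F_\red} \equiv -K_X|_{F_\red}$, which is ample since $N_1(F_\red)$ is spanned by the $K_X$-negative class $[\ell]$.)

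The more serious gap is generic smoothness. Your claim that a very free rational curve in the general fiber ``gives generic smoothness of $f$ via standard deformation theory'' is not justified: the very free curve lives in $F_\red$ and controls the geometry of $F_\red$, but says nothing a priori about whether the scheme-theoretic fiber $F$ is reduced --- in positive characteristic $f$ could fail to be smooth over \emph{every} point of $Y$ even though every $F_\red$ is a smooth SRC Fano with trivial normal bundle. Likewise ``miracle flatness\dots\ one reduces to showing every fiber $F$ is itself smooth'' conflates smoothness of $F_\red$ with reducedness of $F$. This is precisely the pathology that Proposition~\ref{prop:smoothness_contraction} is built to exclude: the paper forms the graph $V \subset X_1 \times X_2$ of the $\sM$-equivalence relation, shows that the geometric generic fiber of $V/X_2$ is reduced, and then proves by an explicit tangent-space computation (using triviality of $N_{p_2^{-1}(x_2)/X_1}$, hence of $H^0$ of the normal bundle) that the induced classifying map $X_2^0 \to \Hilb(X_1)$ is smooth; since it factors through $Y^0$ with $Y^0 \to \Hilb(X_1)$ quasi-finite, $X_2^0 \to Y^0$ is generically smooth, and only then does \cite[Chapter~I, Theorem~6.5]{Kol96} upgrade this to smoothness everywhere. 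Your appeal to the distinguished $\cO(2)$-summand of $T_X|_\ell$ does not substitute for this argument, and without it the proof of item (1) is incomplete.
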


See also Corollary~\ref{cor:face} for contractions of extremal \emph{faces}.

\subsection{Decomposition theorem}
In characteristic zero, a projective variety $X$ with nef tangent bundle admits the Demailly-Peternell-Schneider decomposition (Theorem~\ref{thm:DPS:decomposition}).
In fact $X$ itself admits a smooth Fano fibration $\varphi \colon X \to M$ over a projective variety $M$ with numerically flat tangent bundle $T_M$.
Thus this morphism $\varphi$ contracts any rational curve on $X$ to a point.
Hence this morphism is the maximally rationally connected fibration (MRC fibration) \cite{Cam92}, \cite{KMM92c}.
Therefore the decomposition is obtained by considering the MRC fibration of $X$.
One can expect that a similar picture also holds in positive characteristic.
Note that, in positive characteristic, due to the absence of generic smoothness, the MRC fibration is not an appropriate object.
The substitute of such a fibration is the maximally rationally chain connected fibration (MRCC fibration) \cite[Chapter IV, Section 5]{Kol96}.
The following theorem ensures that each fiber of the MRCC fibration is a Fano variety:

\begin{theorem}[RCC $\implies$ Fano]
\label{thm:RCC=>Fano}
Let $X$ be a smooth projective variety over $k$ and assume that $T_X$ is nef.
If $X$ is rationally chain connected, then $X$ is a smooth Fano variety.
Moreover, the Kleiman-Mori cone $\NE(X)$ of $X$ is simplicial.
\end{theorem}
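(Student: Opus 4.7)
The plan is to proceed by induction on $\dim X$. The base case $\dim X = 0$ is trivial. For the inductive step, assume $\dim X \geq 1$. By Theorem~\ref{thm:RCC=>SRC}, $X$ is SRC, so it carries a very free rational curve along which $-K_X$ has positive degree; hence $K_X$ is not nef. Mori's cone theorem then supplies a $K_X$-negative extremal ray $R_1 \subset \cNE(X)$, and by Theorem~\ref{thm:contraction} its contraction $\pi \colon X \to Y$ exists as a smooth morphism with SRC Fano fibres and nef $T_Y$. Since the fibres are positive-dimensional, $\dim Y < \dim X$, and because $\pi$ is surjective, $Y$ is RCC. The inductive hypothesis therefore yields that $Y$ is Fano with $\cNE(Y)$ simplicial.

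I would use the standard structure of $K_X$-negative extremal-ray contractions to transfer the simpliciality of $\cNE(Y)$ to $\cNE(X)$: the pushforward $\pi_*$ identifies $\cNE(X)/\bR R_1$ with $\cNE(Y)$, sends the only ray of $\cNE(X)$ contained in $\ker \pi_*$ (namely $R_1$) to $0$, and establishes a bijection between the extremal rays $S_i$ of $\cNE(Y)$ and the $2$-dimensional extremal faces $\pi_*^{-1}(S_i) \cap \cNE(X) = R_1 + \tilde S_i$ of $\cNE(X)$. Hence $\cNE(X) = R_1 + \tilde S_1 + \cdots + \tilde S_{\rho(Y)}$, with $\rho(X) = \rho(Y)+1$ linearly independent generators ($R_1 \in \ker \pi_*$ while the $\tilde S_i$ project to the linearly independent $S_i$), so $\cNE(X)$ is simplicial.

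For ampleness of $-K_X$, by Kleiman's criterion it suffices to check that every extremal ray is $K_X$-negative. The ray $R_1$ is $K_X$-negative by construction. For each $\tilde S_i$, compose $\pi$ with the smooth extremal contraction $g \colon Y \to Z$ of $S_i$ furnished by Theorem~\ref{thm:contraction} applied to $Y$. The composition $g \circ \pi \colon X \to Z$ is smooth and contracts exactly the extremal face $R_1 + \tilde S_i$. Each fibre $F' = (g \circ \pi)^{-1}(z)$ is smooth, projective, RCC (a smooth SRC Fano fibration over an SRC Fano base), and has $T_{F'}$ nef (assembled via successive extensions of nef bundles). Whenever $\dim F' < \dim X$ — equivalently, $Z \neq \mathrm{pt}$ — the inductive hypothesis forces $F'$ to be Fano, so $-K_{F'} = (-K_X)|_{F'}$ is ample on $F'$. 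Since $\tilde S_i$ is a rational extremal ray of $\cNE(X)$, it is spanned by the class of an irreducible curve $D_i$; being contracted by $g \circ \pi$, this $D_i$ lies in some fibre $F'$, and therefore $(-K_X) \cdot D_i > 0$, i.e.\ $\tilde S_i$ is $K_X$-negative.

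The main obstacle is the borderline case $Z = \mathrm{pt}$, which occurs precisely when $\rho(Y) = 1$ (so $\rho(X) = 2$) and the face contraction degenerates to the constant map: the fibre equals $X$ and the inductive hypothesis cannot be invoked. One instead rules out a $K_X$-trivial extremal ray $\tilde S_1$ directly, using that no rational curve $f \colon \bP^1 \to X$ can be $K_X$-trivial (since $f^* T_X$ on $\bP^1$ would then be nef of degree $0$ and hence trivial, contradicting the existence of the nonzero differential $T_{\bP^1} = \cO(2) \to f^* T_X$); combining this with $X$ being SRC and with the fibration structure supplied by $\pi$ should force $\tilde S_1$ to be $K_X$-negative. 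A secondary technical point is verifying carefully the nefness of $T_{F'}$ invoked above, so as to legitimately apply the inductive hypothesis to each fibre of the face contraction.
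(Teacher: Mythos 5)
Your overall architecture (induct, contract a ray, use Kleiman's criterion, and get $K_X$-negativity of the remaining rays from the fibres of face contractions being Fano by induction) is close in spirit to the paper, which proves the relative statement (Theorem~\ref{thm:simplicial}) by induction on $\rho(X/Y)$ for a smooth contraction with RCC fibres and then specializes to $Y=\Spec k$. But there is a genuine gap at the central step. From ``$\pi_*$ identifies $\cNE(X)/\bR R_1$ with $\cNE(Y)$'' and the description of the two-dimensional faces $\pi_*^{-1}(S_i)\cap\cNE(X)=R_1+\tilde S_i$, you conclude ``Hence $\cNE(X)=R_1+\tilde S_1+\cdots+\tilde S_{\rho(Y)}$.'' This does not follow. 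Given an effective class $\alpha$ with $\pi_*\alpha=\sum b_i s_i$, one gets $\alpha=\lambda r_1+\sum b_i\tilde s_i$ with $\lambda\in\bR$, and the whole difficulty is to show $\lambda\geq 0$; equivalently, one must rule out extremal rays of $\cNE(X)$ lying over the \emph{interior} of $\cNE(Y)$, which your face-by-face analysis cannot detect. To control the sign one needs a divisor class that is positive on $R_1$ and nonnegative (ideally a supporting function) on the face spanned by the $\tilde S_i$ --- but producing such a divisor is tantamount to producing the contraction of that face, which is what is being proved. The paper breaks this circle by a differently organized induction: in the step (2)$\Rightarrow$(3) of Theorem~\ref{thm:simplicial} it uses \emph{two} contractions, $g\colon X\to X_{m-1}$ (contracting the inductively known simplicial face) and $h\colon X\to Z$ (contracting a lift $R$ of the base's extremal ray, obtained from Lemma~\ref{lem:correspondence_ext_ray}), together with the identification $\cNE(X/X_{m-1})\simeq\cNE(Z/Y)$ and the pairing with $g^*H$ for $H$ ample on $X_{m-1}$, to force the residual coefficient to be nonnegative. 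Your single contraction $\pi$ does not provide the analogous leverage.

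Two further points. First, you implicitly assume $\cNE(X)$ is generated by finitely many extremal rays each spanned by an irreducible curve; Mori's cone theorem gives this only for the $K_X$-negative part, and a priori there could be a $\cNE(X)_{K_X\geq 0}$ part with irrational rays not spanned by curve classes --- precisely what must be excluded. Second, your treatment of the case $Z=\mathrm{pt}$ (i.e.\ $\rho(X)=2$) is only a sketch: the true observation that no rational curve on $X$ is $K_X$-trivial when $T_X$ is nef rules out $K_X$-trivial rays \emph{spanned by rational curves}, but not an extremal ray of the closed cone that is not represented by any curve class; the ``should force'' step is exactly where the argument is missing. As written, the proposal establishes $R_1+\sum\tilde S_i\subseteq\cNE(X)$ and the $K_X$-negativity of those particular rays, but not the reverse inclusion, so neither the simpliciality of $\NE(X)$ nor the ampleness of $-K_X$ is yet proved.
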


 In fact, the above assertion also holds in relative settings (see Theorem~\ref{thm:simplicial}).

By combining the above theorems we can obtain the decomposition theorem of varieties with nef tangent bundle:

\begin{theorem}[Decomposition theorem]
\label{thm:decomposition}
Let $X$ be a smooth projective variety over $k$ and assume that $T_X$ is nef.
Then $X$ admits a smooth contraction $\varphi \colon X \to M$ such that
\begin{enumerate}
 \item $\varphi$ is the MRCC fibration of $X$;
 \item any fiber of $\varphi$ is a smooth SRC Fano variety with nef tangent bundle;
 \item $T_M$ is numerically flat.
\end{enumerate}
\end{theorem}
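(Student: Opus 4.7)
The plan is to construct $\varphi$ by iteratively contracting $K$-negative extremal rays via Theorem~\ref{thm:contraction}, and then to verify each claim using Theorems~\ref{thm:RCC=>SRC} and~\ref{thm:RCC=>Fano}.

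Starting from $X_0 = X$, while $K_{X_i}$ is not nef, Mori's cone theorem produces a $K_{X_i}$-negative extremal ray, and Theorem~\ref{thm:contraction} gives a smooth contraction $X_i \to X_{i+1}$ with SRC Fano fibers whose target again has nef tangent bundle. Since the Picard number strictly drops, the process terminates at some $M = X_n$ with $T_M$ nef and $K_M$ nef. Let $\varphi \colon X \to M$ be the composition; it is smooth. For (3): nefness of $T_M$ forces $-K_M = c_1(T_M)$ to be nef, while $K_M$ is nef by construction, so $K_M \equiv 0$, and a nef bundle with numerically trivial determinant is numerically flat.

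For (2), the key step is that the relative tangent sequence
\[
0 \to T_{X/M} \to T_X \to \varphi^* T_M \to 0
\]
has $T_X$ nef and $\varphi^* T_M$ numerically flat, which forces $T_{X/M}$ to be nef: for any smooth curve $D \subset X$ and any quotient $T_{X/M}|_D \twoheadrightarrow L$ with kernel $K$, the sheaf $T_X|_D/K$ is a nef quotient of $T_X|_D$ sitting in an extension of $\varphi^* T_M|_D$ (of degree zero) by $L$, so $\deg L = \deg(T_X|_D/K) \geq 0$. Restricting to a fiber $F$ gives $T_F$ nef. Iteratively applying the fact that a smooth fibration of SRC varieties over an SRC base is RCC (using the existence of sections over curves for SRC fibrations at each stage of the tower), the fibers of $\varphi$ are RCC; Theorem~\ref{thm:RCC=>Fano} then makes them Fano, and Theorem~\ref{thm:RCC=>SRC} upgrades RCC to SRC.

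Finally, for (1), I show that $M$ contains no rational curves. Suppose $f \colon \bP^1 \to M$ is nonconstant; after factoring out a power of Frobenius we may assume $f$ is separable, so the differential $df \colon T_{\bP^1} \to f^* T_M$ is nonzero. But $f^* T_M$ is numerically flat on $\bP^1$, hence trivial by Grothendieck's splitting theorem, contradicting $\mathrm{Hom}(\cO_{\bP^1}(2), \cO_{\bP^1}^{\dim M}) = 0$. Consequently any chain of rational curves in $X$ is $\varphi$-contracted, so each RCC equivalence class in $X$ is contained in a fiber of $\varphi$; combined with RCC-ness of the fibers, this identifies $\varphi$ with the MRCC fibration. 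The main technical point is verifying the nefness of $T_{X/M}$ from the relative tangent sequence; the rest is a direct assembly of the theorems established earlier in this paper.
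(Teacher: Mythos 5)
Your proposal is correct and follows essentially the same route as the paper: iteratively contract $K$-negative extremal rays via Theorem~\ref{thm:contraction} until the canonical class becomes nef (hence numerically trivial, making $T_M$ numerically flat), deduce RCC-ness of the $\varphi$-fibers by running the section theorem of \cite{GHS03,DJS03} up the tower of smooth SRC Fano fibrations, and then apply Theorems~\ref{thm:RCC=>Fano} and~\ref{thm:RCC=>SRC}; the identification with the MRCC fibration via the absence of rational curves on $M$ is also the paper's argument. The only differences are cosmetic (induction on Picard number rather than dimension, and spelled-out justifications for the nefness of $T_{X/M}$ and the non-existence of rational curves on $M$, both of which the paper leaves implicit or handles via Proposition~\ref{prop:nef_bundle}).
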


\subsection{Questions}
The above decomposition theorem reduces the study of varieties with nef tangent bundle to two cases (the Fano case and the $K_X$-trivial case).
The following suggests the possible structures of varieties in these two cases:

\begin{question}[{\cite[Conjecture~11.1]{CP91}, \cite[Question 1]{Wat17}}]
Let $X$ be a smooth projective variety over $k$ and assume $T_X$ is nef.
\begin{enumerate}
 \item If $X$ is a Fano variety, then is $X$ a homogeneous space $G/P$, where $G$ is a semisimple algebraic group and $P$ is a parabolic subgroup?
 \item If $K_X \equiv 0$ or, equivalently, $T_X$ is numerically flat, then is $X$ an \'etale quotient of an abelian variety? 
\end{enumerate}
\end{question}

Note that, in characteristic zero, the second assertion is true by virtue of the Beauville-Bogomolov decomposition, which is studied intensively in characteristic positive \cite{PZ20}, but is still open.
See also \cite{Wat17,MS87} for partial answers on the above question. 

\subsection{Application}
In the last part of this paper, we will apply our study of varieties with nef tangent bundle to the study of \emph{$F$-liftable varieties}.
A smooth projective variety $X$ (over an algebraically closed field of positive characteristic) is said to be $F$-liftable, if it lifts modulo $p^2$ with the Frobenius morphism (see Definition~\ref{def:f-liftable} for the precise definition).
Natural examples of $F$-liftable varieties are toric varieties and ordinary abelian varieties.
Conversely, any $F$-liftable variety is expected to decompose into these two types of varieties:

\begin{conjecture}[{\cite[Conjecture~1]{AWZ17}}]
Let $Z$ be an $F$-liftable variety.
Then there exists a finite Galois cover $f \colon Y \to Z$ such that the Albanese morphism $\alpha_{Y} \colon Y \to \Alb (Y)$ is a toric fibration.
\end{conjecture}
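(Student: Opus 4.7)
The plan is to reduce the conjecture to the setting where the paper's decomposition theorem (Theorem~\ref{thm:decomposition}) applies, and then to analyze the two resulting factors separately, leveraging the additional rigidity imposed by $F$-liftability.

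The crucial first step is to show that for an $F$-liftable $Z$ the tangent bundle $T_Z$ is nef, perhaps only after a finite étale Galois cover. The strategy I would pursue is to exploit the canonical splitting of the Cartier operator provided by the lift of Frobenius: this splitting yields a Hodge-theoretic filtration on Frobenius pushforwards, and combined with the numerical rigidity consequences of an $F$-lifting (as developed by Achinger--Witaszek--Zdanowicz) one should be able to rule out negative quotients of $T_Z$. A possible variant is to show that $F$-liftability propagates to every free rational curve $C \subset Z$, forcing $T_Z|_C$ to split as a direct sum of line bundles of non-negative degree, and then to conclude nefness by the usual bend-and-break / deformation analysis on the space of rational curves.

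Once nefness of $T_Z$ is established, Theorem~\ref{thm:decomposition} produces a smooth morphism $\varphi \colon Z \to M$ whose fibers are SRC Fano varieties with nef tangent bundle and with $T_M$ numerically flat. Both the base and the general fiber should inherit $F$-liftability in a controlled way, since the MRCC fibration is canonical and an $F$-lifting restricts to one on each factor after a suitable étale base change. For $M$: numerically flat $T_M$ together with an $F$-lifting should force $M$ to be, up to a finite étale cover, an ordinary abelian variety---combining \cite{PZ20}-style structural results for numerically flat tangent bundles with the fact that an $F$-liftable abelian variety is ordinary. For the fibers: these are SRC Fano with nef tangent bundle and $F$-liftable, so a Campana--Peternell-type classification in the $F$-liftable regime is expected to yield products of projective spaces, which are toric. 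Finally, by passing to a finite étale Galois cover $Y \to Z$ that trivializes the requisite monodromy of $\varphi$ and of the étale cover of $M$, one identifies $\varphi$ with the Albanese morphism $\alpha_Y$ (since $\varphi$ contracts all rational curves and the base is abelian, $\varphi$ factors through $\alpha_Y$, and dimension/universality matches them).

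The main obstacle is the very first step: deducing nefness of $T_Z$ from $F$-liftability alone. Without an a priori positivity of $T_Z$, the decomposition machinery of this paper has nothing to latch onto. The known consequences of $F$-liftability---Bott vanishing, the splitting of the de Rham complex, and rigidity of the Frobenius itself---do not translate immediately to a numerical statement on $T_Z$, and bridging this gap is essentially the core of the full conjecture. If this route is blocked, the alternative is to establish a direct toric characterization of $F$-liftable varieties with trivial Albanese, which however is known to be equivalent to the hardest case of the conjecture itself.
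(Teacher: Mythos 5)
The statement you are addressing is recorded in the paper as an \emph{open conjecture} (Conjecture~1 of \cite{AWZ17}), not as a theorem: the paper proves it only under the additional hypothesis that the tangent bundle is nef (Theorem~\ref{thm:F-liftable}). So there is no proof in the paper to compare against, and your attempt has to stand on its own.

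The decisive gap is your first step, and it is not merely hard but false as stated. You propose to show that $F$-liftability forces $T_Z$ to be nef, possibly after a finite \'etale cover. But every smooth toric variety is $F$-liftable, and most have non-nef tangent bundle: the Hirzebruch surface $\bF_2$ contains a $(-2)$-curve $C$, the normal bundle sequence exhibits $\cO_C(-2)$ as a quotient of $T_{\bF_2}|_C$, so $T_{\bF_2}$ is not nef; and $\bF_2$ is algebraically simply connected, so no \'etale cover can help. Indeed the conjecture's own predicted answer --- toric fibrations over abelian varieties --- is populated by varieties with non-nef tangent bundle, so no argument, however clever, can deduce nefness from $F$-liftability. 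Any route through nefness of $T_Z$ can only ever reprove the special case the paper actually establishes, namely Theorem~\ref{thm:F-liftable}, where nefness is an assumption. Your own closing remark is the correct assessment: bridging this gap is essentially the entire content of the conjecture, and neither the paper nor current technology supplies it.

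For what it is worth, the remainder of your outline, \emph{conditional} on $T_Z$ being nef, tracks the paper's proof of Theorem~\ref{thm:F-liftable} fairly closely: apply the decomposition theorem; descend $F$-liftability to the fibers and the base of the MRCC fibration (the paper does this via $R^1\varphi_*\cO_X=0$, which follows from SRC-ness of the fibers, together with \cite{AWZ17}); classify the fibers as products of projective spaces by induction on the Picard number starting from the $\rho=1$ case of \cite{AWZ17}; identify the base, up to \'etale cover, with an ordinary abelian variety; and match the fibration with the Albanese morphism after a further Galois \'etale cover. But that is a proof of the theorem, not of the conjecture.
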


This conjecture is confirmed in a case:
In \cite{BTLM97} and \cite{AWZ17}, the case of homogeneous varieties was solved.
In fact, in \cite[Proposition~6.3.2]{AWZ17}, the conjecture is checked when $X$ is a Fano manifold with nef tangent bundle whose Picard number is one.
Here we apply our study to show that the conjecture holds under a more general situation that $T_X$ is nef:

\begin{theorem}[$F$-liftable varieties with nef tangent bundle]\label{thm:F-liftable} 
If $X$ is $F$-liftable and $T_X$ is nef, then there exists a finite \'etale Galois cover $f \colon Y \to X$ such that the Albanese morphism $\alpha_{Y} \colon Y \to \Alb (Y)$ is a smooth morphism onto an ordinary abelian variety whose fibers are products of projective spaces.
\end{theorem}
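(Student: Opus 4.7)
The plan is to combine the decomposition theorem (Theorem~\ref{thm:decomposition}) with the available classification of $F$-liftable varieties in two extreme cases: varieties with numerically trivial canonical class, and Fano varieties of Picard number one with nef tangent bundle (the latter being \cite[Proposition~6.3.2]{AWZ17}).

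First, apply Theorem~\ref{thm:decomposition} to obtain a smooth MRCC fibration $\varphi \colon X \to M$ whose fibers are smooth SRC Fano varieties with nef tangent bundle and whose base satisfies $T_M$ numerically flat, so in particular $K_M \equiv 0$. Since $\varphi$ is smooth with rationally chain connected fibers, $\varphi_*\cO_X = \cO_M$, and the $F$-liftability of $X$ descends to an $F$-liftability of $M$ via the obstruction-theoretic formalism developed in \cite[Section~3]{AWZ17}.

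Next, apply the results of \cite{AWZ17} on $F$-liftable varieties with $K \equiv 0$ to produce a finite étale Galois cover $M' \to M$ with $M'$ an ordinary abelian variety. Form the base change $Y := X \times_M M'$, which is a finite étale Galois cover of $X$ with the same Galois group, and let $\varphi' \colon Y \to M'$ denote the second projection; by construction $\varphi'$ is smooth with SRC Fano fibers. Since each fiber $F$ satisfies $H^0(F, \Omega^1_F) = 0$, every global $1$-form on $Y$ pulls back from $M'$, so the Albanese morphism $\alpha_Y$ factors through $\varphi'$; comparing dimensions and universal properties then identifies $\Alb(Y) = M'$ and $\alpha_Y = \varphi'$.

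The remaining task is to show that each fiber $F$ of $\varphi'$ is a product of projective spaces. Each such $F$ is an SRC Fano variety with nef tangent bundle, and $F$-liftability of $Y$ restricts to $F$-liftability of $F$ because the chosen lift of $M'$ is canonically compatible with Frobenius. I proceed by induction on $\rho(F)$: when $\rho(F)=1$, \cite[Proposition~6.3.2]{AWZ17} identifies $F$ with a projective space. In general, Theorem~\ref{thm:RCC=>Fano} implies that $\NE(F)$ is simplicial, so $F$ admits $\rho(F)$ independent extremal contractions; by Theorem~\ref{thm:contraction} each is smooth with SRC Fano fibers, and the descent of $F$-liftability (together with the induction hypothesis applied to both bases and fibers) identifies each such base and each corresponding fiber class with products of projective spaces. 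The product of these contractions realizes $F$ inside a product of projective spaces with matching Picard number and dimension, forcing $F \cong \bP^{n_1} \times \cdots \times \bP^{n_k}$.

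The main obstacle will be the careful verification that $F$-liftability actually descends along the MRCC fibration and genuinely restricts to the Fano fibers. The formalism is developed in \cite{AWZ17}, but one must check the required vanishings of higher direct images of sheaves of differentials under $\varphi$, together with compatibility of Frobenius lifts under both the cover $Y \to X$ and restriction to a fiber; analogous compatibilities are also needed at each stage of the induction on Picard number.
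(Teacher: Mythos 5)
Your overall strategy matches the paper's: decompose via Theorem~\ref{thm:decomposition}, descend $F$-liftability to the base $M$ and to the Fano fibers, pass to a finite \'etale cover whose base is an ordinary abelian variety, and identify the resulting fibration with the Albanese morphism of a Galois cover. Those parts are essentially sound. (The paper arranges the Galois property by taking a further \'etale cover of $X\times_M A$ and then re-identifying the Albanese via the rigidity lemma and a Stein factorization, rather than by making $A\to M$ Galois first and arguing with $1$-forms --- the latter is delicate in characteristic $p$, but the universal property of the Albanese plus the absence of rational curves on abelian varieties repairs it.)

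The genuine gap is in your final step: the claim that an $F$-liftable Fano variety $F$ with nef tangent bundle and $\rho(F)=k$ is a product of projective spaces because ``the product of these contractions realizes $F$ inside a product of projective spaces with matching Picard number and dimension.'' This is false as stated: $\bP(T_{\bP^n})$ is a Fano variety with nef tangent bundle whose cone is simplicial of dimension $2$, and both of its extremal contractions are smooth $\bP^{n-1}$-fibrations over $\bP^n$, yet the product map $\bP(T_{\bP^n})\to\bP^n\times\bP^n$ is a closed embedding onto a hypersurface --- the dimensions do not match and $F$ does not split. Knowing that every elementary contraction of $F$ is a projective-space fibration over a product of projective spaces does not force $F$ to be a product. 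The paper's Proposition~\ref{prop:Fano:product} closes exactly this gap: for $\rho=2$ it first uses the vanishing of the Brauer group of the (inductively known) bases to see each contraction is a projectivized vector bundle, then invokes Sato's classification of varieties with two projective-bundle structures (\cite[Theorem~A]{Sat85}), which leaves only $\bP^{n}\times\bP^{m}$ and $\bP(T_{\bP^n})$, and finally excludes $\bP(T_{\bP^n})$ because it is \emph{not} $F$-liftable (\cite[Lemma~6.4.3]{AWZ17}); for $\rho\ge 3$ it needs a further descent of the vector bundle along $X_1\to X_{1,2}$ via Grauert's theorem to produce the splitting. Your induction as written uses the $F$-liftability of $F$ only to feed the inductive hypothesis, so it cannot distinguish $\bP(T_{\bP^n})$ from $\bP^n\times\bP^n$; this step must be replaced by the Sato-plus-non-liftability argument (or an equivalent classification input).
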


\subsection{Outline of the paper}
This article is organized as follows:
In Section~\ref{sect:preliminaries}, we provide a preliminaries on nef vector bundles, and recall some basic properties on nef vector bundles.

In Section~\ref{sect:SRC}, we study the separable rational connectedness of varieties with nef tangent bundle and prove Theorem~\ref{thm:RCC=>SRC}.
The main ingredient of the proof is Shen's theorem that provides a relation between separable rational connectedness and foliations in positive characteristic \cite{She10}.
By using this relation, we construct a purely inseparable finite morphism $X \to Y$ (if $X$ is RCC) such that $Y$ also has nef tangent bundle and is SRC.
If $X \not \simeq Y$, then we can find a nowhere vanishing vector field $D$ on $Y$ and moreover, by choosing $D$ suitably, we can construct an action of the group scheme $G = \mu_p$ or $\alpha_p$ on $Y$ without fixed points.
Then, by following Koll\'ar's proof of simple connectedness of SRC varieties \cite[Corollaire 3.6]{Deb03}, we will have a contradiction and hence $X$ itself is SRC.

In Section~\ref{sect:contractions}, we study extremal contractions on varieties with nef tangent bundle and prove Theorem~\ref{thm:contraction}.
Theorem~\ref{thm:contraction} essentially follows from the arguments of \cite[Theorem 2.2]{Kan18Kequiv} and \cite[Lemma~4.12]{SW04}, while there are some problems to adapt these arguments in our case.
The most major issue is due to pathological phenomena in positive characteristic.
We overcome this issue by using Theorem~\ref{thm:RCC=>SRC}.

In Section~\ref{sect:Fano}, we will prove Theorem~\ref{thm:RCC=>Fano}.
In fact, we will study the relative Kleiman mori cone $\NE (X/Y)$ of a contraction $f \colon X \to Y$ with RCC fibers.
The main theorem of this section is Theorem~\ref{thm:simplicial}, which proves that the cone $\NE (X/Y)$ is simplicial cone spanned by $K_X$-negative extremal rays.
The proof essentially goes as that of \cite[Proof of Theorem~4.16]{Wat20}.

In Section~\ref{sect:decomposition}, we prove Theorem~\ref{thm:decomposition}, which easily follows from the theorems in previous sections.

In the last section, we study the case where $X$ is $F$-liftable and prove Theorem~\ref{thm:F-liftable}.

\subsection*{Conventions}
Throughout this paper, we work over an algebraically closed field $k$ of characteristic $p>0$.
We use standard notations and conventions as in \cite{Har77}, \cite{Kol96}, \cite{KM98} and \cite{Deb01}:
\begin{itemize}
 \item Unless otherwise stated, a \emph{point} means a closed point and a \emph{fiber} means a fiber over a closed point.
 \item  A \emph{rational curve on a variety $X$} is a nonconstant morphism $f \colon \bP^1 \to X$ or, by an abuse of notation, its image $f(\bP^1) \subset X$.
 \item A \emph{contraction} $f \colon X \to Y$ is a projective morphism of varieties such that $f_* \cO_X =\cO _Y$.
 \item For a contraction $f \colon X \to Y$, we denote by $N_1(X / Y)$ the $\bR$-vector space of the numerical equivalence classes of relative $1$-cycles.
 The \emph{cone of relative effective $1$-cycles} $\NE (X/Y) \subset N_1(X/Y)$ is the semi-subgroup generated by the classes of effective $1$-cycles.
 The closure $\cNE(X/Y)$ is called the \emph{relative Kleiman-Mori cone}. 
 \item For a contraction $X \to Y$, the \emph{relative Picard number}  $\rho(X /Y)$ is the rank of $N_1(X/Y)$.
 \item For a contraction $f \colon X \to Y$, $N^1(X/Y)$ is the $\bR$-vector space generated by the $f$-numerical equivalence classes of Cartier divisors.
 Note that $N^1(X/Y)$ and $N_1(X/Y)$ are finite dimensional vector spaces, which are dual to each other.
 \item If $Y =\Spec k$, then we will use $N_1(X)$, $\NE(X)$, $\rho(X)$, $N^1(X)$ instead of  $N_1(X/Y)$, $\NE(X/Y)$, $\rho(X/Y)$, $N^1(X/Y)$.
 \item For a smooth projective variety $S$, a \emph{smooth $S$-fibration} is a smooth morphism between varieties whose closed fibers are isomorphic to $S$.
 \item $\sE^{\vee}$ is the dual vector bundle of a vector bundle $\sE$.
 \item $\bP(\sE)$ is the Grothendieck projectivization $\Proj(\Sym \sE)$ of a vector bundle $\sE$.
 \item A subsheaf $\sF \subset \sE$ of a locally free sheaf $\sE$ is called a \emph{subbundle} if the quotient bundle $\sE/\sF$ is a locally free sheaf.
 \item For a projective variety $X$, we denote by $F_X$ the absolute Frobenius morphism $X \to X$.
\end{itemize}

Note that a contraction, if it exists, is uniquely determined by $\NE (X/Y) \subset N_1(X)$.
A \emph{contraction of a $K_X$-negative extremal ray $R$} is, by definition, a contraction $f \colon X \to Y$ such that $\rho (X/Y) = 1$  and $R \coloneqq \NE(X/Y) \subset N_1(X/Y)_{K_X  \leq 0}$.

Furthermore we use standard terminology on families of rational curves.
For example,
\begin{itemize}
 \item $\rat^n (X)$ is the scheme that parametrizes rational curves on $X$, and a \emph{family of rational curves} is an irreducible component $\sM$ of the scheme $\rat^n (X)$.
 \item For a family $\sM$ of rational curves, there exists the following diagram:
 \[
 \xymatrix{
 \sU \ar[d]_{p} \ar[r]^{q} &X \\
 \sM,
 }
 \]
where
\begin{itemize}
 \item $p$ is a smooth $\bP^1$-fibration, which corresponds to the universal family;
 \item $q$ is the evaluation map.
\end{itemize}
Thus a point $m \in \sM$ corresponds to a rational curve $q|_{p^{-1}(m)} \colon p^{-1}(m) \simeq \bP^1 \to X$.
\item A family $\sM$ of rational curves is called \emph{unsplit} if it is projective.
\end{itemize}
For the details of the constructions of these spaces and for basic properties, we refer the reader to \cite[Chapter~II, Section~2]{Kol96} and also to \cite{Mor79}.

\begin{acknowledgements}
The content of this article was firstly planned as a collaborative work of these authors with Doctor Sho Ejiri, who has refrained from being listed as a coauthor for the reason that he did not think his contribution was enough.
Nevertheless we are greatly indebted to him for sharing ideas and discussing about the subject, especially about the case of numerically flat tangent bundles and about the Frobenius splitting methods; we wish to express our sincere gratitude to him.

The first author would like to thank Doctor Tatsuro Kawakami for helpful discussions.
\end{acknowledgements}

\section{Preliminaries}\label{sect:preliminaries}

\subsection{Nef vector bundles}
We collect some facts on nef vector bundles.

Let $\sE$ be a vector bundle on a smooth projective variety $X$.
Then the bundle $\sE$ is called \emph{nef} if the relative tautological divisor $\cO_{\bP(\sE)}(1)$ is nef.
By the definition, we see that any quotient bundle of a nef vector bundle is again nef.
Also we see that, for a morphism $f \colon Y \to X$ from a projective variety $Y$, the pullback $f^*\sE$ is nef if $\sE$ is nef, and the converse holds if $f$ is surjective.
A vector bundle $\sE$ is called \emph{numerically flat} if $\sE$ and its dual $\sE^{\vee}$ are nef.

Note that, by \cite[Proposition~3.5]{Bar71},
the tensor product of two nef vector bundles are nef again. In particular, if $\sE$ is nef, then the exterior products of $\sE$ are also nef.

By the same argument as in the case of characteristic zero (cf.\ \cite[Theorem~6.2.12]{Laz04b} and \cite[Proposition~1.2]{CP91}), we have the following:

\begin{proposition}[Nef vector bundles]\label{prop:nef_bundle}
Let $\sE$ be a vector bundle on a smooth projective variety $X$. We have the following:
\begin{enumerate}
 \item $\sE$ is numerically flat if and only if  both $\sE$ and $\det \sE^{\vee}$ are nef.
 \item\label{prop:nef_bundle4} Let $0 \to \sF \to \sE \to \sG \to 0 $ be an exact sequence of vector bundles.
 Assume that $\sF$ and $\sG$ are nef. Then $\sE$ is also nef.

 Conversely, if $\sE$ is nef and $c_1(\sG) \equiv 0$, then $\sF$ is nef.
 \item\label{prop:nef_bundle5} Assume $\sE$ is nef.
 If $\sL \to \sE^\vee$ is a non-trivial morphism from a numerically trivial line bundle, then $\sL$ defines a subbundle of $\sE^\vee$.
\end{enumerate}
\end{proposition}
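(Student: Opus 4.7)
The plan is to reduce all three parts to the Barton--Kleiman characterization: a vector bundle is nef if and only if, for every morphism $f \colon C \to X$ from a smooth projective curve and every line bundle quotient of $f^*\sE$, the quotient has non-negative degree. Combined with the already-recalled fact that quotients, exterior powers, and tensor products of nef bundles are again nef, this gives a uniform approach.

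For part (1), only the direction ``$\sE$ nef and $\det \sE^{\vee}$ nef $\implies \sE^{\vee}$ nef'' requires work. I would invoke the canonical identification
\[
\sE^{\vee} \simeq \bigwedge^{r-1}\sE \otimes \det \sE^{\vee}
\]
with $r = \rank \sE$; the right-hand side is a tensor product of two nef bundles, hence nef. For the forward implication of part (2), I would test nefness of $\sE$ on a curve $f \colon C \to X$ and a quotient line bundle $L$ of $f^*\sE$, splitting on whether the composite $f^*\sF \to L$ is zero: if zero, then $L$ factors through the nef bundle $f^*\sG$; if nonzero, its image is a rank-one subsheaf $L' \subset L$ that is a quotient of the nef bundle $f^*\sF$, so $0 \leq \deg L' \leq \deg L$.

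For the converse of part (2), part (1) upgrades the hypotheses to ``$\sG$ numerically flat'', so $\det f^*\sG$ is of degree zero for every $f \colon C \to X$. Given a quotient $f^*\sF \twoheadrightarrow L$, I would form the pushout
\[
\xymatrix{
0 \ar[r] & f^*\sF \ar[r] \ar@{->>}[d] & f^*\sE \ar[r] \ar@{->>}[d] & f^*\sG \ar[r] \ar@{=}[d] & 0 \\
0 \ar[r] & L \ar[r] & L' \ar[r] & f^*\sG \ar[r] & 0,
}
\]
so that $L'$ is a vector bundle on $C$ realized as a quotient of the nef bundle $f^*\sE$, and is therefore itself nef. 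Nef bundles on a smooth curve have $\deg \det \geq 0$, and since $\det L' = L \otimes \det f^*\sG$ with the second factor of degree zero, we conclude $\deg L \geq 0$.

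For part (3), a nonzero morphism $\sL \to \sE^{\vee}$ from a line bundle is automatically injective as a sheaf map, so the issue is to show the cokernel is locally free, equivalently that the inclusion is saturated after pulling back to any smooth curve $f \colon C \to X$. If the saturation $\sM$ of $f^*\sL$ in $f^*\sE^{\vee}$ were strictly larger, it would be a line subbundle with $\deg \sM > \deg f^*\sL = 0$; dualising the inclusion $\sM \hookrightarrow f^*\sE^{\vee}$ would then exhibit a line bundle quotient $\sM^{\vee}$ of $f^*\sE$ of negative degree, contradicting the nefness of $\sE$. The main technical point to watch is in part (2), where one must verify that the pushout $L'$ is honestly nef as a quotient of $f^*\sE$; this follows from the closed immersion $\bP(L') \hookrightarrow \bP(f^*\sE)$ respecting the tautological bundle, and is the only place beyond formal manipulations with exact sequences.
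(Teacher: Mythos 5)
Your argument is correct and is essentially the spelled-out version of the proof the paper implicitly invokes: the paper gives no proof of this proposition, simply asserting that the characteristic-zero arguments of the cited references carry over, and those arguments are exactly the ones you use (the Barton--Kleiman curve criterion, the identification $\sE^{\vee}\simeq\bigwedge^{r-1}\sE\otimes\det\sE^{\vee}$ for part (1), and the pushout/saturation manipulations on curves for parts (2) and (3)). All the ingredients are characteristic-free given the fact, already recorded in the paper, that tensor and exterior powers of nef bundles are nef, so there is nothing to correct.
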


\begin{lemma}[Numerically flat quotient bundles]\label{lem:num_flat_quot}
Let $\sE$ be a  nef vector bundle on a smooth  projective variety, and $\sQ$ a torsion free quotient of $\sE$ such that $c_1(\sQ) \equiv 0$.
Then $\sQ$ is a numerically flat vector bundle.
Moreover the kernel of $\sE \to \sQ$ is a nef vector bundle.
\end{lemma}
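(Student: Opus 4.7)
The plan is to first reduce the lemma to showing that $\sQ$ is locally free, and then to establish local freeness by restricting to general curves and globalizing.

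First, I would observe that once $\sQ$ is locally free, the two conclusions follow readily from Proposition~\ref{prop:nef_bundle}. As a quotient of the nef bundle $\sE$, the bundle $\sQ$ is nef; the hypothesis $c_1(\sQ) \equiv 0$ gives $\det \sQ^{\vee}$ numerically trivial and hence nef, so part~(1) of Proposition~\ref{prop:nef_bundle} yields numerical flatness of $\sQ$. The kernel $\sF := \Ker(\sE \to \sQ)$ is then a vector bundle (as the kernel of a surjection of vector bundles), and applying the second assertion of Proposition~\ref{prop:nef_bundle}(\ref{prop:nef_bundle4}) to the exact sequence $0 \to \sF \to \sE \to \sQ \to 0$ (with $\sE$ nef and $c_1(\sQ) \equiv 0$) shows that $\sF$ is nef.

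To establish local freeness, I would restrict to a sufficiently general complete intersection curve. Let $Z \subset X$ denote the locus where the torsion free sheaf $\sQ$ fails to be locally free; since $X$ is smooth, $Z$ has codimension $\geq 2$. A general complete intersection curve $C = D_1 \cap \cdots \cap D_{n-1}$ with $D_i \in |mH|$ for $H$ ample and $m \gg 0$ can be arranged to be smooth, irreducible, and disjoint from $Z$, so $\sQ|_C$ is a vector bundle of rank $r := \rank \sQ$ on $C$. As a quotient of the nef bundle $\sE|_C$ of degree $c_1(\sQ) \cdot [C] = 0$, the Harder--Narasimhan filtration of $\sQ|_C$ collapses---every quotient of a nef bundle on a curve has nonnegative degree, and the total degree is zero, so all slopes vanish---forcing $\sQ|_C$ to be numerically flat on $C$.

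The main obstacle will be the globalization from curve-by-curve numerical flatness to local freeness on $X$. My plan is to consider the section $\phi \colon U \to \Gr_r(\sE)$ of the relative Grassmannian of rank-$r$ quotients of $\sE$ induced on $U := X \setminus Z$ by the surjection $\sE|_U \twoheadrightarrow \sQ|_U$, and to show that $\phi$ extends to a section defined on all of $X$. Pulling back the universal quotient along this extension would produce a locally free rank-$r$ quotient of $\sE$ agreeing with $\sQ$ on $U$; torsion freeness of $\sQ$ would then force $\sQ$ to equal this extension, giving local freeness. Producing the extension of $\phi$ across the codimension-$\geq 2$ locus $Z$ is the crux, and my expectation is to use the numerical flatness on a covering family of curves---together with the properness and smoothness of $\Gr_r(\sE) \to X$---to limit the possible degenerations of $\phi$ along $Z$ sufficiently to deduce that the extension exists.
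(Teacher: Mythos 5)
Your reduction to local freeness of $\sQ$, and the deductions you draw once that is known (nefness plus $\det\sQ^\vee$ numerically trivial gives numerical flatness via Proposition~\ref{prop:nef_bundle}~(1), and the converse part of Proposition~\ref{prop:nef_bundle}~\ref{prop:nef_bundle4} gives nefness of the kernel), are exactly right and agree with the paper. The problem is that local freeness \emph{is} the content of the lemma, and your proposal does not actually prove it. The curve-restriction step only produces information on complete intersection curves disjoint from the non-locally-free locus $Z$, so it says nothing about the behaviour of $\sQ$ near $Z$; and the final step --- extending the section $\phi\colon U\to \Gr_r(\sE)$ across the codimension-$\geq 2$ set $Z$ --- is stated as an ``expectation'' rather than argued. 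Properness and smoothness of $\Gr_r(\sE)\to X$ are nowhere near sufficient for such an extension: a rational map from a smooth variety to a proper $X$-scheme defined off a codimension-two set need not extend (already $\bP^2\dashrightarrow\bP^1$, projection from a point, fails to extend across that point, and here the target fibers are Grassmannians containing plenty of rational curves). So the nefness of $\sE$ and the condition $c_1(\sQ)\equiv 0$ must enter this step in an essential, global way, and your sketch does not indicate how.

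For comparison, the paper closes exactly this gap by a determinant argument rather than by extending a Grassmannian section: the dual map $\sQ^\vee\to\sE^\vee$ induces $\det\sQ^\vee\to\bigwedge^{\rank\sQ}\sE^\vee$, which is a morphism from a numerically trivial line bundle into the dual of the nef bundle $\bigwedge^{\rank\sQ}\sE$, hence is a bundle injection (nowhere vanishing) by Proposition~\ref{prop:nef_bundle}~\ref{prop:nef_bundle5} --- this is where nefness and $c_1(\sQ)\equiv 0$ are used globally. Then \cite[Lemma~1.20]{DPS94} upgrades the nowhere-vanishing determinant to the statement that $\sQ^\vee$ is a subbundle of $\sE^\vee$, so $\sE\to\sQ^{\vee\vee}$ is a surjection onto a vector bundle; since $\sQ\to\sQ^{\vee\vee}$ is then both surjective and injective ($\sQ$ being torsion free), $\sQ=\sQ^{\vee\vee}$ is locally free. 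If you want to salvage your route, you would need an argument of comparable strength at the extension step; as written, the proposal has a genuine gap precisely at the heart of the lemma.
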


\begin{proof}
Consider the dual map $\sQ^{\vee} \to \sE^{\vee}$.
Then the map $\det (\sQ^{\vee}) \to \bigwedge^{\rank \sQ} \sE^\vee$ is a bundle injection by Proposition~\ref{prop:nef_bundle}~\ref{prop:nef_bundle5}. 
Then, by \cite[Lemma 1.20]{DPS94},  the sheaf $\sQ^{\vee}$ is a subbundle of $\sE^{\vee}$.
Hence the composite $\sE \to \sQ \to \sQ^{\vee \vee}$ is surjective.
In particular, the map $\sQ \to \sQ^{\vee \vee}$ is also surjective.
Since $\sQ$ is torsion free, we have $\sQ = \sQ^{\vee \vee}$.
Now it follows from Proposition~\ref{prop:nef_bundle}~\ref{prop:nef_bundle4} that the kernal of $\sE \to \sQ$ is a nef vector bundle.
\end{proof}

\begin{theorem}[{\cite{BDS13}}]\label{thm:SRC:bundle:trivial}
Let $X$ be a smooth projective separably rationally connected variety and $\sE$ a vector bundle on $X$.
Assume that, for any rational curve $f\colon  \bP^1 \to X$, the pull-back $f^{\ast}\sE$ is trivial.
Then $\sE$ itself is trivial.
\end{theorem}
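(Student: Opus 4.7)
The strategy is to produce a trivialization $\cO_{X}^{r}\xrightarrow{\sim}\sE$, where $r=\rank\sE$. The first step is a \emph{separation property}: any $s\in H^{0}(X,\sE)$ with $s(x)=0$ vanishes identically. For any $y\in X$, choose a chain of rational curves $C_{1},\ldots,C_{N}$ connecting $x$ to $y$ (possible since SRC implies rational chain connectedness); on each $C_{i}$ the pull-back of $\sE$ is trivial, so $s|_{C_{i}}$ corresponds to a tuple of constants on $\bP^{1}$, and vanishing at one point of $C_{i}$ forces $s|_{C_{i}}\equiv 0$. Propagating along the chain gives $s(y)=0$. In particular $H^{0}(X,\sE)\hookrightarrow \sE_{x}$ for every $x$, so $h^{0}(X,\sE)\le r$.

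For the substantive content, fix $x\in X$ and let $p\colon \sU\to \sM$ be the universal family over an irreducible component of very free rational curves through $x$, with evaluation $q\colon \sU\to X$ and marked section $\sigma\colon \sM\to \sU$ such that $q\circ\sigma\equiv x$. By hypothesis, $q^{\ast}\sE$ is fiberwise trivial over $\sM$, hence cohomology and base change yield: $p_{\ast}q^{\ast}\sE$ is locally free of rank $r$, the adjunction map $p^{\ast}p_{\ast}q^{\ast}\sE\to q^{\ast}\sE$ is an isomorphism (fiberwise it is the evaluation $H^{0}(\bP^{1},\cO^{r})\otimes\cO_{\bP^{1}}\xrightarrow{\sim}\cO_{\bP^{1}}^{r}$), and pull-back by $\sigma$ identifies $p_{\ast}q^{\ast}\sE$ with the constant sheaf $\sE_{x}\otimes \cO_{\sM}$. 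Composing, we obtain a canonical trivialization $\tau\colon \sE_{x}\otimes \cO_{\sU}\xrightarrow{\sim} q^{\ast}\sE$.

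The remaining task is to descend $\tau$ along the dominant map $q$: given $u_{1},u_{2}\in \sU$ with $y=q(u_{1})=q(u_{2})$, the induced isomorphisms $\tau_{u_{i}}\colon \sE_{x}\to \sE_{y}$ must coincide. Equivalently, parallel transport from $x$ to $y$ along a very free rational curve must be path-independent. The plan is to take two such curves $C_{1},C_{2}$ and form the nodal comb $C_{0}=C_{1}\cup_{x}C_{2}$: by the deformation theory of combs of very free rational curves (adding teeth to ensure smoothability if necessary), $C_{0}$ is smoothable to an irreducible rational curve inside a flat one-parameter family $\pi\colon \mathcal{C}\to B$ with a morphism $\Phi\colon \mathcal{C}\to X$ whose generic fiber is irreducible rational. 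The hypothesis, applied to the generic irreducible fibers, together with cohomology and base change on this family, propagates triviality so that $\Phi^{\ast}\sE\cong \pi^{\ast}\pi_{\ast}\Phi^{\ast}\sE$ in a neighborhood of $C_{0}$; restricting to $C_{0}$ then forces $\Phi^{\ast}\sE|_{C_{0}}$ to be globally trivial, which is precisely the compatibility of the two trivializations at the node $x$. Hence $\tau$ descends to a trivialization of $\sE$ over the dense open image $U=q(\sU)\subset X$, whose $r$ components give $r$ sections of $\sE|_{U}$ linearly independent at $x$; these globalize (after varying the basepoint $x$ to cover $X$ and using the separation property to glue the trivializations on overlaps) to an isomorphism $\cO_{X}^{r}\xrightarrow{\sim}\sE$. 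The main obstacle is this descent step: producing enough smoothable comb configurations between arbitrary pairs of points and carefully controlling the behaviour of triviality on nodal special fibers — the deformation-theoretic core of the argument, which rests essentially on the strong form of separable rational connectedness.
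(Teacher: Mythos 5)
The paper offers no proof of this statement -- it is quoted as an external result from \cite{BDS13} -- so there is nothing internal to compare against; I will judge your argument on its own terms. The first two steps are sound: the separation property (a global section vanishing at one point vanishes identically, whence $h^0(X,\sE)\le r$ with injective evaluation everywhere), and the canonical trivialization $\tau\colon \sE_x\otimes\cO_{\sU}\xrightarrow{\sim}q^{*}\sE$ over the universal family of very free curves through $x$ via cohomology and base change. These correctly reduce the theorem to the descent of $\tau$ along $q$, i.e.\ to showing that the parallel transports $\tau_{u_1},\tau_{u_2}\colon \sE_x\to\sE_y$ agree for $u_1,u_2\in q^{-1}(y)$.

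The descent argument you give, however, does not work. Gluing $C_1$ and $C_2$ at $x$ produces a tree of rational curves, and a vector bundle on a tree of $\bP^1$'s that is trivial on every irreducible component is \emph{automatically} globally trivial (adjust the trivialization of one branch by the constant transition matrix at the node); so the conclusion you extract from the smoothing family -- that $\Phi^{*}\sE|_{C_0}$ is trivial -- holds for free and carries no information. More to the point, it is not ``precisely the compatibility of the two trivializations'': the compatibility you need is at $y$, which sits on $C_0$ as two \emph{distinct, unglued} points, one on each branch, and triviality of a bundle on $C_0$ says nothing about how its fibers over those two points are identified. To encode the monodromy of the loop $x\to y\to x$ you would have to glue at both $x$ and $y$, producing a cycle of arithmetic genus $1$, to which the hypothesis (which concerns only morphisms $\bP^1\to X$) does not apply and whose smoothings are not rational curves. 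So the path-independence -- which is the entire content of the theorem beyond formal base-change manipulations -- is left unproved. One standard way to close the gap: $\tau$ is a section over $\sU$ of $q^{*}\mathrm{Isom}(\sE_x\otimes\cO_X,\sE)$, i.e.\ a morphism $\sU\to \mathrm{Isom}(\sE_x\otimes\cO_X,\sE)$ over $X$; its restriction to a connected \emph{proper} fiber $q^{-1}(y)$ lands in the affine variety $\mathrm{Isom}(\sE_x,\sE_y)\cong \mathrm{GL}_r$ and is therefore constant, which is exactly $\tau_{u_1}=\tau_{u_2}$. One must then still arrange for $q$ to be proper with connected fibers (compactifying the family, or passing through a Stein factorization and using simple connectedness of $X$) to turn this pointwise statement into an actual descent of $\tau$ to $X$.
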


This implies the following:
\begin{corollary}[{see \cite{Gou14}}]\label{cor:SRC:h1}
For a smooth projective separably rationally connected variety $X$, the first cohomology $H^1(X, \cO_X)$ vanishes.
\end{corollary}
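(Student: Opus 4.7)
The plan is to reduce the statement to Theorem~\ref{thm:SRC:bundle:trivial} via the extension-theoretic interpretation of $H^1(X, \cO_X)$. I would take an arbitrary class $\xi \in H^1(X, \cO_X) = \operatorname{Ext}^1(\cO_X, \cO_X)$ and represent it by a short exact sequence
\[
0 \to \cO_X \to \sE \to \cO_X \to 0,
\]
so that $\sE$ is a rank two vector bundle whose isomorphism class (together with the given flag) encodes $\xi$.

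Next, for any rational curve $f \colon \bP^1 \to X$, pulling back yields an extension of $\cO_{\bP^1}$ by itself, which automatically splits because $H^1(\bP^1, \cO_{\bP^1}) = 0$. Hence $f^*\sE \cong \cO_{\bP^1}^{\oplus 2}$ is trivial for every rational curve. Applying Theorem~\ref{thm:SRC:bundle:trivial} to the bundle $\sE$, I conclude that $\sE$ itself is trivial on $X$, i.e.\ $\sE \cong \cO_X^{\oplus 2}$.

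Finally, I would read off the vanishing of $\xi$ from the long exact sequence of cohomology associated to the extension. Since $h^0(X, \sE) = 2$ while $h^0(X, \cO_X) = 1$, the map $H^0(X, \sE) \to H^0(X, \cO_X)$ is forced to be surjective, so the connecting homomorphism $H^0(X, \cO_X) \to H^1(X, \cO_X)$ sending $1$ to $\xi$ must vanish. Thus $\xi = 0$, and since $\xi$ was arbitrary we obtain $H^1(X, \cO_X) = 0$.

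The only nontrivial input is Theorem~\ref{thm:SRC:bundle:trivial} itself, so there is no real obstacle beyond setting up the extension correctly; the remainder is a purely formal manipulation. A conceptually cleaner variant would be to observe directly that $H^1(X, \cO_X)$ classifies numerically trivial line bundles infinitesimally, but going through the explicit extension keeps the argument self-contained and avoids appealing to additional machinery like $\operatorname{Pic}^\tau$ or the Albanese.
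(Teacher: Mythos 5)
Your proof is correct and is exactly the deduction the paper intends when it says Theorem~\ref{thm:SRC:bundle:trivial} ``implies'' the corollary (the paper leaves the argument to the reference \cite{Gou14}): interpret a class in $H^1(X,\cO_X)$ as a self-extension of $\cO_X$, note it splits on every rational curve since $H^1(\bP^1,\cO_{\bP^1})=0$, apply the triviality theorem, and read off the vanishing of the extension class from the resulting $h^0$ count. No gaps.
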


\section{Separable rational connectedness}\label{sect:SRC}
In this section, we will prove Theorem~\ref{thm:RCC=>SRC} and Corollary~\ref{cor:SRC}.

\subsection{Preliminaries: Foliations and separable rational connectedness}
Here we collect several results from \cite{She10}, which describe the relation between separable rational connectedness of a variety $X$ and foliations on $X$.
For an account of the general theory of foliations, we refer the reader to \cite{Eke87,MP97}.

Let $X$ be a normal projective variety of dimension $n$.
A rational curve $f \colon \bP^1 \to X$ is called \emph{free} (resp.\ \emph{very free}) if $f(\bP^1)$ is contained in the smooth locus of $X$, and $f^*T_X$ is nef (resp.\ ample).

\begin{definition}[{RCC, RC, FRC, SRC {\cite[Chapter~IV. Definition~3.2]{Kol96}, \cite[Definition 1.2]{She10}}}]\label{def:RC}
Let $X$ be a normal projective variety over $k$.
Then $X$ is called
\begin{enumerate}
 \item \emph{rationally chain connected (RCC)} if there exist a variety $T$ and a scheme $\sU$ with morphisms $(T \xleftarrow{p} \sU \xrightarrow{q} X)$ such that
 \begin{itemize}
  \item $p$-fibers are connected proper curves with only rational components;
  \item the natural map $q^{(2)} \colon \sU \times _T \sU \to X \times X$ is dominant.
 \end{itemize}
 
 \item \emph{rationally connected (RC)} if there exist a variety $T$ and a scheme $\sU$ with morphisms $(T \xleftarrow{p} \sU \xrightarrow{q} X)$ such that
 \begin{itemize}
  \item $p$-fibers are \emph{irreducible} rational curves;
  \item the natural map $q^{(2)} \colon \sU \times _T \sU \to X \times X$ is dominant.
 \end{itemize}
 
 \item  \emph{freely rationally connected (FRC)} if there exists a variety $T$ with morphisms $(T \xleftarrow{\pr_2} \bP^1 \times T \xrightarrow{q} X)$ such that
 \begin{itemize}
  \item each $\pr_2$-fiber defines a \emph{free} rational curve on $X$;
  \item the natural map $q^{(2)} \colon \bP^1 \times \bP^1 \times T  \to X \times X$ is dominant.
 \end{itemize}

 \item \emph{separably rationally connected (SRC)} if there exists a variety $T$  with morphisms $(T \xleftarrow{\pr_2} \bP^1 \times T \xrightarrow{q} X)$ such that
 \begin{itemize}
  \item the natural map $q^{(2)} \colon \bP^1 \times \bP^1 \times T  \to X \times X$ is dominant and smooth at the generic point.
 \end{itemize}
\end{enumerate}
\end{definition}

\begin{remark}
\hfill
\begin{enumerate}
 \item Any smooth Fano variety is rationally chain connected (see for instance \cite[Chapter~V. Theorem~2.13]{Kol96}).
 \item Note that the definition of separable rational connectedness here is slightly different from the definition in \cite{She10} (the existence of very free rational curves).
 However, if $X$ is smooth, then the separable rational connectedness of $X$ is equivalent to the existence of a very free rational curve on $X$ (see for instance \cite[Chapter~IV. Theorem~3.7]{Kol96}).
 Thus the two definitions coincide. Note also that, in the situation of this paper, we mainly consider smooth varieties and thus we do not need to care about the differences (cf. the smoothness of $X^{[1]}$ in Proposition~\ref{prop:num_flat_subbundle}).
 \item In general, the following implications hold:
 \[
 \text{SRC} \implies \text{FRC} \implies \text{RC} \implies \text{RCC}.
 \]
 Assume that $X$ is smooth and $T_X$ is nef.
 Then $\text{RCC} \implies \text{FRC}$.
 This follows from the smoothing technique of free rational curves (see e.g. \cite[Chapter~II. 7.6]{Kol96}).
\end{enumerate}
\end{remark}

Assume that a normal projective variety $X$ contains a free rational curve $f \colon \bP^1 \to X$.
Since any vector bundle on $\bP^1$ is a direct sum of line bundles,
we have
 \[
 f^* T_X \simeq \bigoplus_{i=1}^{r} \cO(a_i) \bigoplus \cO^{\oplus n-r}
 \]
with $a_i >0$.
Then the positive part $ \bigoplus_{i=1}^{r} \cO(a_i)$ defines a subsheaf of $f^*T_X$ that is independent of the choice of decomposition of $f^*T_X$.

\begin{definition}[Positive ranks and maximally free rational curves {\cite[Definition 2.1]{She10}}]
Let $f \colon \bP^1 \to X$ be a free rational curve and consider a decomposition of $f^* T_X$ as above:
 \[
 f^* T_X \simeq \bigoplus_{i=1}^{r} \cO(a_i) \bigoplus \cO^{\oplus n-r}.
 \]
\begin{enumerate}
 \item The ample subsheaf $ \bigoplus_{i=1}^{r} \cO(a_i) \subset f^*T_X$ is called the \emph{positive part} of $f^*T_X$, and denoted by $\Pos (f^*T_X)$.
 \item $r = \rank \Pos (f^*T_X)$ is called the \emph{positive rank} of the free rational curve $f$.
 \item The \emph{positive rank of $X$} is defined as the maximum of positive ranks of the free rational curves.
 \item A free rational curve $f\colon \bP^1 \to X$ is called \emph{maximally free} if its positive rank is the positive rank of $X$.
\end{enumerate}
\end{definition}

\begin{proposition}[{\cite[Proposition 2.2]{She10}}]
Let $x \in X$ be a closed point.
Assume that there exists a maximally free rational curve $f \colon \bP^1 \to X$ with $f(\bP^1) \ni x$.
Then there exists a $k$-subspace $\sD (x) \subset  T_X\otimes k(x)$ such that 
\begin{itemize}
 \item for every maximally  free rational curve $f \colon \bP^1 \to X$ with $f(0) = x$, we have
 \[
 \Pos(f^*T_X)\otimes k(0) = \sD (x)
 \]
 as a subspace of $T_X \otimes k(x)$.
\end{itemize}
\end{proposition}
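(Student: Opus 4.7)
The plan is to fix one maximally free rational curve $f_0 \colon \bP^1 \to X$ with $f_0(0) = x$ and set $\sD(x) := \Pos(f_0^* T_X) \otimes k(0) \subset T_X \otimes k(x)$, a subspace of dimension $r$ (the positive rank of $X$). The substance of the proposition is then that, for any other maximally free $f \colon \bP^1 \to X$ with $f(0) = x$, the $r$-dimensional subspace $Q := \Pos(f^* T_X) \otimes k(0)$ coincides with $\sD(x)$. Since both subspaces have dimension $r$, it suffices to prove the inclusion $Q \subseteq \sD(x)$.

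I would argue this by contradiction. Supposing that $Q \not\subseteq \sD(x)$, so that $\dim_k(\sD(x) + Q) \geq r+1$, the strategy is to produce a new free rational curve $h \colon \bP^1 \to X$ whose positive rank exceeds $r$, contradicting the maximality of $f_0$. First I would form the two-tooth comb $C = \bP^1 \cup_x \bP^1$ (two copies of $\bP^1$ glued transversally at $x$), with the morphism $\phi \colon C \to X$ equal to $f_0$ on one component and $f$ on the other. Since both teeth are free, the comb is unobstructed and smoothable in the sense of Mori's theory of combs; applying the smoothing results of \cite[Chapter~II, Section~7]{Kol96}, I would deform $\phi$ to an irreducible morphism $h \colon \bP^1 \to X$ whose image passes through a point arbitrarily close to $x$.

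The final step is to verify that the positive rank of $h^* T_X$ is at least $r+1$. The basic computational identity here is that, for any free rational curve $g \colon \bP^1 \to X$ and any two distinct points $0, \infty \in \bP^1$, the evaluation map $\mathrm{ev}_0 \colon H^0(\bP^1, g^* T_X \otimes \cO(-\infty)) \to T_X \otimes k(g(0))$ surjects onto $\Pos(g^* T_X) \otimes k(0)$, as one checks directly from the splitting type of $g^* T_X$. Combining this identity on each component of $C$ with upper semicontinuity of $h^0$ under the smoothing family, one aims to exhibit a space of sections of $h^* T_X$ whose evaluation image at the limit of the node has dimension at least $\dim_k(\sD(x) + Q) \geq r+1$, forcing $\rank \Pos(h^* T_X) \geq r+1$.

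The main technical obstacle is this last bookkeeping: a naive approach fails because on the comb itself, sections vanishing at both tips match at the node in $\sD(x) \cap Q$ rather than in $\sD(x) + Q$. The fix is to vary the positions of the vanishing points in the smoothing family, exploiting that after smoothing the distinguished node becomes an ordinary point on $\bP^1$ while the tips can be moved freely. Making this argument precise, and propagating the ``positive part'' jump through the smoothing, is the genuine content of \cite[Proposition~2.2]{She10}; once it is in place the contradiction with the maximality of $r$ is immediate, and by symmetry and equality of dimensions we conclude $Q = \sD(x)$.
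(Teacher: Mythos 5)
First, a point of order: the paper does not prove this proposition at all --- it is imported verbatim from \cite[Proposition~2.2]{She10} --- so there is no internal proof to compare against, and your reconstruction has to stand on its own. Your overall strategy (glue two maximally free curves at $x$, smooth the nodal curve, and contradict maximality of the positive rank once $\dim(\sD(x)+Q)\geq r+1$) is indeed the strategy of Shen's proof, and the reduction to the inclusion $Q\subseteq\sD(x)$ by equality of dimensions is fine. But the step you yourself flag as ``the genuine content'' is exactly where the argument has a gap, and the fix you gesture at (varying the positions of the vanishing points) is not the right one; as written the proof is incomplete.

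Here is the concrete issue. You try to exhibit sections of $h^*T_X$ on the smoothing whose evaluation at the limit of the node spans $\sD(x)+Q$; as you observe, sections of $h^*T_X(-q_1-q_2)$ on the comb evaluate at the node inside $\sD(x)\cap Q$, and moving $q_1,q_2$ does not change that. The correct bookkeeping is a dimension count, not an evaluation argument. Write $Z=Z_1\cup_p Z_2$ for the nodal curve, $f_i=h|_{Z_i}$, $d_i=-K_X\cdot f_i$, $D_1=\sD(x)$, $D_2=Q$, and pick $q_i\in Z_i\setminus\{p\}$. Then $H^0(Z_i,f_i^*T_X(-q_i))$ has dimension $d_i$ and evaluates at $p$ onto $D_i$ (your own computation), so a global section of $h^*T_X(-q_1-q_2)$ is a pair $(s_1,s_2)$ with $s_1(p)=s_2(p)$, i.e.\ an element of the kernel of the map $(s_1,s_2)\mapsto s_1(p)-s_2(p)$, whose image is precisely $D_1+D_2$. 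Hence
\[
h^0\bigl(Z,\, h^*T_X(-q_1-q_2)\bigr)=d_1+d_2-\dim(D_1+D_2).
\]
On the other hand, for the smoothed free curve $f'$ with two marked points $q_1',q_2'$ the splitting type gives $h^0\bigl({f'}^*T_X(-q_1'-q_2')\bigr)=d_1+d_2-\rank\Pos({f'}^*T_X)$. Semicontinuity in a smoothing family equipped with two sections specializing to $q_1,q_2$ then yields $\rank\Pos({f'}^*T_X)\geq\dim(D_1+D_2)\geq r+1$, the desired contradiction. In other words, the matching condition at the node --- the very thing you regarded as the obstacle --- imposes exactly $\dim(D_1+D_2)$ linear conditions, and that codimension is what semicontinuity transfers to the smoothing. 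With this replacement (plus the standard facts that a comb with free teeth smooths to an irreducible free curve and that the smoothing can be taken with two marked sections) the argument closes.
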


The next proposition ensures that these subspaces
$\sD (x)$ patch together to define a subsheaf of $T_X$:

\begin{proposition}[{\cite[Proposition 2.5]{She10}}]
There exist a nonempty open subset $U \subset X$ and a subbundle $\sD  \subset T_U$ such that
\[
\sD\otimes k(x)  = \sD (x)
\]
holds for all $x \in U$.
\end{proposition}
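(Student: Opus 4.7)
The plan is to realize $\sD$ as the descent of a ``relative positive part'' along the evaluation map from a universal family of maximally free rational curves. Fix an irreducible component of $\rat^n(X)$ containing a maximally free rational curve, and let $\sM$ be its open subset parametrizing maximally free curves; this is open because the splitting type of $f_m^{\ast}T_X$ is constant on a nonempty open locus, by upper semi-continuity of the Harder--Narasimhan polygon in $\bP^1$-families. Write $p\colon \sU \to \sM$ and $q\colon \sU \to X$ for the universal $\bP^1$-bundle and evaluation; since every curve parametrized by $\sM$ is free, $q$ is smooth on $\sU$, and its image $U \coloneqq q(\sU) \subset X$ is open.

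Next, constancy of the splitting type of $(q^{\ast}T_X)|_{p^{-1}(m)}$ along $\sM$ implies, by standard base change applied to suitable $p$-relative twists of $q^{\ast}T_X$, that there is a subbundle $\widetilde{\sD} \subset q^{\ast}T_X$ on $\sU$ whose restriction to each fiber $p^{-1}(m) \simeq \bP^1$ coincides with $\Pos(f_m^{\ast}T_X)$. In particular, at every point $u \in p^{-1}(m)$ one has $\widetilde{\sD}\otimes k(u) = \Pos(f_m^{\ast}T_X)\otimes k(u)$ viewed inside $T_X\otimes k(q(u))$.

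Finally, I would descend $\widetilde{\sD}$ along the faithfully flat smooth map $q\colon \sU \to U$. By faithfully flat descent for subbundles, this reduces to showing $\pr_1^{\ast}\widetilde{\sD} = \pr_2^{\ast}\widetilde{\sD}$ as subbundles of the pullback of $T_X|_U$ to $\sU \times_U \sU$. At each closed point $(u_1,u_2)$ with common image $x \in U$, both pullbacks restrict to $\sD(x) \subset T_X\otimes k(x)$ by the preceding proposition. Since $\sU \times_U \sU$ is reduced and the two subbundles have the same rank, pointwise equality at all closed points forces equality as subbundles; the descended $\sD \subset T_U$ is then the desired subbundle with $\sD\otimes k(x) = \sD(x)$.

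The main technical obstacle is the construction of $\widetilde{\sD}$ as an honest subbundle (not merely a pointwise subspace), which hinges on constancy of the splitting type of $(q^{\ast}T_X)|_{p^{-1}(m)}$ and hence on restricting to \emph{maximally} free curves; the descent step itself is essentially formal once the well-definedness of $\sD(x)$ supplied by the preceding proposition is in hand.
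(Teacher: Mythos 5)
The paper offers no proof of this statement --- it is quoted directly from \cite[Proposition~2.5]{She10} --- and your argument is essentially a reconstruction of Shen's: produce the relative positive part $\widetilde{\sD}\subset q^{\ast}T_X$ over an open locus of maximally free curves and descend it along the smooth surjective evaluation map $q$, using the pointwise well-definedness of $\sD(x)$ from the preceding proposition to check the descent datum on $\sU\times_U\sU$ (which is reduced, so agreement of the two Grassmannian sections at closed points suffices). This works; the only points to tidy are that a relative $\cO_{\sU/\sM}(1)$ need not exist globally on the universal family over $\rat^n(X)$ (so either work on the Hom scheme or construct the canonical image subsheaf $\im\bigl(p^{\ast}p_{\ast}(q^{\ast}T_X\otimes\cO(-1))\otimes\cO(1)\to q^{\ast}T_X\bigr)$ locally on $\sM$ and glue), and that what this construction actually requires is not constancy of the full splitting type but only of $h^0\bigl(f_m^{\ast}T_X\otimes\cO(-1)\bigr)=-K_X\cdot C_m$ (automatic for free curves) together with constancy of the positive rank $n-h^0(f_m^{\ast}\Omega_X)$, whose lower semicontinuity is also the clean reason the maximally free locus is open.
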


In the following we denote also by $\sD  \subset T_X$ the saturation of $\sD $ in $T_X$ (by an abuse of notation).

\begin{theorem}[{\cite[Proposition 2.6]{She10}}]
The sheaf $\sD $ defines a foliation. Namely, $\sD$ satisfies the following conditions:
\begin{enumerate}
 \item $\sD$ is involutive, i.e. it is closed under the Lie bracket $[\sD ,\sD ] \subset \sD $;
 \item $\sD$ is $p$-closed, i.e. $\sD ^p \subset \sD $.
\end{enumerate}
\end{theorem}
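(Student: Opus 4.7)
Plan. The strategy is to phrase both ``involutivity'' and ``$p$-closedness'' as the vanishing of an $\cO_X$-linear obstruction map between coherent sheaves, and then to test this vanishing by pulling back along a general maximally free rational curve. Since maximally free rational curves dominate $X$ by the construction of $\sD$, any such obstruction vanishes as soon as it does after pulling back along each such curve.

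On the open set $U \subset X$ where $\sD$ is a subbundle of $T_U$, the Lie bracket descends to an $\cO_U$-linear map
\[
\omega\colon \Lambda^2 \sD \longrightarrow T_U/\sD, \qquad u\wedge v \longmapsto [u,v] \bmod \sD,
\]
because the non-linear corrections $v(f)u$ and $u(g)v$ both lie in $\sD$. Granting $\omega=0$, Jacobson's formula shows the $p$-th power operation on derivations descends to a $p$-linear map $\sD \to T_U/\sD$, hence to an $\cO_U$-linear map
\[
\psi\colon F_U^*\sD \longrightarrow T_U/\sD.
\]
Involutivity and $p$-closedness of $\sD$ are then equivalent to $\omega=0$ and $\psi=0$ respectively.

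I would then compute these obstructions along a general maximally free rational curve $f\colon \bP^1 \to X$ whose image meets $U$. By the preceding proposition, after saturation on $\bP^1$ the subsheaf $f^*\sD \subset f^*T_X$ coincides with $\Pos(f^*T_X) = \bigoplus_{i=1}^{r}\cO_{\bP^1}(a_i)$ with every $a_i \geq 1$, while $f^*(T_X/\sD)$ is the trivial complementary summand $\cO_{\bP^1}^{\oplus n-r}$. Consequently $\Lambda^2 f^*\sD$ is a sum of line bundles $\cO(a_i+a_j)$ of degree $\geq 2$, while $F^*f^*\sD = \bigoplus \cO(pa_i)$ has every summand of degree $\geq p$. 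Since any morphism from an ample vector bundle on $\bP^1$ to the trivial bundle vanishes, we obtain $f^*\omega=0$ and $f^*\psi=0$. As the family of such curves dominates $U$ and $\omega,\psi$ are sections of coherent $\cO_U$-module sheaves, we conclude $\omega=0$ and $\psi=0$ on $U$, and the resulting involutivity and $p$-closedness extend to the saturation $\sD \subset T_X$.

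The main obstacle is twofold. First, $\psi$ is only manifestly a sheaf homomorphism once $[\sD,\sD]\subset \sD$ is known, so involutivity must be established before the $p$-closedness obstruction is even well-defined; this forces the proof to proceed in this order. Second, the clean identification $f^*\sD=\Pos(f^*T_X)$ on $\bP^1$ holds only after saturation and only for curves sufficiently general that the pointwise equality $\sD\otimes k(x)=\Pos(f^*T_X)\otimes k(0)$ of the preceding proposition propagates along the whole pullback; this is arranged by taking $f$ through a general point of $U$ and invoking semicontinuity.
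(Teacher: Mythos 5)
The paper states this result without proof, citing Shen \cite[Proposition~2.6]{She10}, and your argument is essentially Shen's original one: encode involutivity and $p$-closedness as $\cO_X$-linear obstruction maps $\Lambda^2\sD \to T_X/\sD$ and $F_X^*\sD \to T_X/\sD$, and observe that both vanish because along a general maximally free rational curve the source pulls back to an ample bundle while the target pulls back to a trivial one. Your proposal is correct, including the two points you flag: involutivity must be established first for the $p$-closedness obstruction to be well defined, and the identification of $f^*\sD$ with $\Pos(f^*T_X)$ requires saturating and choosing the curve generically.
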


Then there exist a variety $X^{[1]} \coloneqq X/\sD $, which is called the quotient of $X$ by the foliation $\sD$, and a sequence of morphisms
\[
X \xrightarrow{f} X^{[1]} \xrightarrow{g} X^{(1)} \xrightarrow{\sigma} X
\]
such that
\begin{itemize}
 \item $f \colon X \to X^{[1]}$ is the quotient map induced from the foliation;
 \item $\sigma \circ g \circ f$ is the absolute Frobenius morphism $F_{X}$;
 \item $g \circ f$ is the relative Frobenius morphism over $k$.
\end{itemize}
Note that $f$ and $g$ are morphisms over $\Spec(k)$, but $\sigma$ is not.
Note also that, in general, $X^{[1]}$ is normal.
In the situation of this paper,  however, $\sD $ is always a subbundle and hence $X^{[1]}$ is smooth (see Proposition~\ref{prop:num_flat_subbundle})

\begin{theorem}[{\cite[Proposition 3.4 and Theorem 5.1]{She10}}]\label{thm:Shen}
The following hold: 
\begin{enumerate}
 \item $X$ contains a very free rational curve if and only if $X^{[1]} \simeq  X^{(1)}$.
 \item If $X$ is FRC, then so is $X^{[1]}$.
 \item Set $X^{[m+1]} \coloneqq (X^{[m]})^{[1]}$ inductively.
 Then, if  $X$ is FRC, then $X^{[m]}$ contains a very free rational curve for $m \gg 0$.
\end{enumerate}
\end{theorem}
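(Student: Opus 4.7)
The plan is to prove the three parts in order, with (1) following from a direct analysis of the purely inseparable quotient, (2) being a pushforward argument, and (3) being a termination argument on the positive rank.

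For \textbf{(1)}, I would start from the factorization $X \xrightarrow{f} X^{[1]} \xrightarrow{g} X^{(1)}$ of the relative Frobenius. Both $f$ and $g$ are purely inseparable quotients, and their degrees multiply to $p^{\dim X}$ (the degree of Frobenius). Since $f$ is the quotient by the $p$-foliation $\sD$, it has degree $p^{\rank \sD}$. Hence $X^{[1]}\simeq X^{(1)}$ if and only if $g$ has degree $1$, equivalently $\sD = T_X$. Now $\sD = T_X$ means the positive rank of $X$ equals $\dim X$, which by the definition of positive rank says that for a maximally free rational curve $f \colon \bP^1 \to X$ the bundle $f^{*}T_X$ has no trivial summand; since $f$ is free, this is equivalent to $f^{*}T_X$ being ample, i.e.\ $f$ being very free. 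The converse (existence of a very free curve forces positive rank to be $\dim X$) is immediate.

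For \textbf{(2)}, pick a covering family of free rational curves on $X$; by semicontinuity of the positive rank along the family, a general member $f \colon \bP^1 \to X$ is maximally free. Setting $\pi \colon X \to X^{[1]}$ and $\bar{f} = \pi \circ f$, I would analyze $\bar{f}^{*}T_{X^{[1]}}$ via the tangent exact sequence for the $p$-foliation quotient, which on the smooth locus takes the form
\[
0 \to \sD \to T_X \to \pi^{*}T_{X^{[1]}} \to \sQ \to 0,
\]
where $\sQ$ is the Frobenius-twisted contribution. Pulling back to $\bP^1$, the map $f^{*}T_X \to \bar{f}^{*}T_{X^{[1]}}$ has kernel $f^{*}\sD$, which coincides generically with $\Pos(f^{*}T_X)$; thus the map factors through the trivial quotient, and the further contribution from $\sQ$ is a Frobenius twist of this trivial bundle, hence nef. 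In particular $\bar{f}^{*}T_{X^{[1]}}$ is nef, so $\bar{f}$ is free. Dominance of the two-point evaluation passes to $X^{[1]}$ because $\pi$ is surjective, giving the FRC property for $X^{[1]}$.

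For \textbf{(3)}, set $r_m$ to be the positive rank of $X^{[m]}$; by (2) each $X^{[m]}$ is FRC, so $r_m$ is defined. By (1) it suffices to prove $r_m = \dim X$ for $m \gg 0$. I would show that if $r_m < \dim X$, then $r_{m+1} > r_m$: on a general maximally free $f \colon \bP^1 \to X^{[m]}$, the pushforward $\bar{f}$ acquires extra positive summands in $\bar{f}^{*}T_{X^{[m+1]}}$ because the numerically trivial quotient $T_{X^{[m]}}/\sD_m$ is being pulled back by a relative Frobenius and its degrees are multiplied by $p$; the corresponding summands, being obtained from a trivial bundle by a Frobenius twist compared with an enlargement of the foliation, must become strictly positive on a general free curve (as soon as the trivial quotient is nonzero). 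Since $r_m$ is a nondecreasing integer bounded by $\dim X$, it must stabilize at $\dim X$. The main obstacle will be step (3), specifically making precise the claim that Frobenius twisting converts the numerically flat quotient into an effectively positive contribution on $X^{[m+1]}$; this requires careful comparison of the foliations $\sD_m$ on consecutive quotients via the relative Frobenius and is where the bulk of the technical work in \cite{She10} lies.
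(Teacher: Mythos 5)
First, note that the paper does not prove this statement at all: it is quoted verbatim from \cite{She10} (Proposition 3.4 and Theorem 5.1) and used as a black box, so there is no in-paper argument to compare yours against. Judged on its own, your treatment of (1) is correct and is essentially Shen's: $g$ is a finite purely inseparable morphism of degree $p^{\dim X-\rank\sD}$ between normal varieties, so it is an isomorphism iff $\sD=T_X$ iff the positive rank equals $\dim X$ iff a maximally free curve is very free. Your (2) is also essentially right, with two inaccuracies worth fixing. The four-term sequence is $0\to\sD\to T_X\to \pi^{*}T_{X^{[1]}}\to F_X^{*}\sD\to 0$, so the cokernel is the Frobenius pullback of the \emph{positive} part $\sD$, not of the trivial quotient as you write; on a maximally free $f$ it restricts to $\bigoplus\cO(pa_i)$, which is why the conclusion (nefness of the pullback, hence freeness) still stands. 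Also, since $T_{\bP^1}$ maps into $\Pos(f^{*}T_X)=f^{*}\sD$, the composite $\pi\circ f$ factors through the Frobenius of $\bP^1$; the honest induced curve on $X^{[1]}$ is the one obtained by dividing by that Frobenius. This does not affect freeness or the dominance of the two-point evaluation, but it is essential for the degree bookkeeping you want to use in (3).

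Part (3) is where the genuine gap lies, and it is the crux of Shen's theorem. Your proposed mechanism is that the positive rank strictly increases, $r_{m+1}>r_m$, whenever $r_m<\dim X$. The computation one can actually carry out only gives $r_{m+1}\ge r_m$: writing $f^{*}T_{X^{[m]}}=\bigoplus_{i}\cO(a_i)\oplus\cO^{\oplus(n-r)}$ and untwisting the induced curve by Frobenius, the pullback of $T_{X^{[m+1]}}$ satisfies $F^{*}(f_1^{*}T_{X^{[m+1]}})$ sits in an extension $0\to\cO^{\oplus(n-r)}\to E\to\bigoplus\cO(pa_i)\to 0$, whence at most $n-r$ trivial summands; but if this extension splits one gets $f_1^{*}T_{X^{[m+1]}}\simeq f^{*}T_{X^{[m]}}$ and the positive rank does not move, so the iteration could a priori run forever with identical numerics. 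Ruling this out is exactly where the FRC hypothesis (the full two-parameter family dominating $X\times X$, not a single maximally free curve) must be used to produce strictly more positive curves on the quotient, and this is the substance of \cite[Theorem 5.1]{She10}. Since you explicitly defer this step, the proposal is an outline whose decisive claim is unproved; as written it does not establish (3).
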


\begin{remark}[Foliations and tangent bundles]\label{rem:foliation}
Recall that $\sigma \colon X^{(1)} \to X$ is the base change of the absolute Frobenius $F_{\Spec(k)}$ on $\Spec(k)$ by the structure morphism $X \to \Spec(k)$ (Note that $F_{\Spec(k)}$ is an isomorphism).
In particular, there is a natural identification $\sigma ^{*}T_X = T _{X^{(1)}}$.
Then $\sigma^* \sD \subset \sigma ^{*}T_X = T _{X^{(1)}}$ defines an $\cO_{X^{(1)}}$-subsheaf.

Furthermore, by considering the pull-back by $g$, we obtain a morphism $g^*\sigma ^* \sD  \to g^*\sigma ^{*}T_X = g^*T _{X^{(1)}}$, which gives a subbundle $g^* \sigma ^* \sD \subset g^* \sigma ^{*}T_X = T _{X^{(1)}}$ over the open subset on which $\sD $ is a \emph{subbundle}.
Note that the differential $dg \colon T_{X^{[1]}} \to g^* T_{X^{(1)}}$ factors through $g^* \sigma ^* \sD $.
Denote by $\sK$ the kernel of the map $T_{X^{[1]}} \to g^* \sigma ^* \sD $.
Then we have $f^* \sK \simeq T_X/\sD $.
Moreover, we have the following exact sequence on the open subset where $\sD $ is a subbundle:
\begin{itemize}
 \item $ 0 \to \sK \to T_{X^{[1]}} \to g^*\sigma^* \sD  \to 0$;
 \item $ 0 \to \sD  \to T_X \to f^* T_{X^{[1]}} \to F_{X}^* \sD  \to 0 $.
\end{itemize}

Note that $\sK$ is the foliation corresponds to $X^{[1]} \to  X^{(1)}$, and hence $\sK$ is involutive and $p$-closed.
\end{remark}

\subsection{Action of group schemes $\mu_p$ and $\alpha_p$}
Here we briefly recall the relation between vector fields and actions of group schemes $\mu_p$ and $\alpha_p$. See e.g.\ \cite[Chapitre II, \S7]{DG70}, \cite[\S1]{RS76}, \cite[\S1]{MN91} for an account.

\begin{definition}[$\mu_p$ and $\alpha_p$]
The group schemes $\mu_p$ and $\alpha_p$  are defined as follows:
\begin{itemize}
 \item $\mu_p \coloneqq \Spec k[x]/(x^p -1) \subset \bG_m = \Spec k[x,x^{-1}]$;
 \item $\alpha_p \coloneqq \Spec k[x]/(x^p) \subset \bG_a = \Spec k[x]$.
\end{itemize}
Here the group scheme structures are induced from $\bG_m$ and $\bG_a$.
\end{definition}

\begin{proposition}[$p$-closed vector fields]
Let $X$ be a smooth variety and $D \in H^0(T_X)$ be a vector field.
\begin{enumerate}
 \item There is a bijection between the set of vector fields $D$ with $D^p = D$ and the set of $\mu _p$-actions on $X$.
 \item There is a bijection between the set of vector fields $D$ with $D^p = 0$ and the set of $\alpha _p$-actions on $X$.
\end{enumerate}
\end{proposition}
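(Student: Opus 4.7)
The plan is to reduce actions of these infinitesimal height-one group schemes to their corresponding restricted Lie algebras, and then to match these with vector fields on $X$ satisfying the appropriate $p$-th power identity.

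First I would set up the Hopf-algebraic description. A $G$-action $\sigma \colon G \times X \to X$ is equivalent to a $k$-algebra homomorphism $\sigma^{\ast} \colon \cO_X \to \cO_X \otimes_k \cO_G$ satisfying the counit and coassociativity axioms. For $G = \alpha_p = \Spec k[t]/(t^p)$ (with additive comultiplication $\Delta(t) = t \otimes 1 + 1 \otimes t$) or $G = \mu_p = \Spec k[t]/(t^p-1)$ (with multiplicative comultiplication $\Delta(t) = t \otimes t$), the algebra $\cO_G$ has $k$-basis $1, t, \ldots, t^{p-1}$, so any such coaction can be written uniquely as
\[
\sigma^{\ast}(f) = \sum_{i=0}^{p-1} D_i(f)\, t^i
\]
for $k$-linear operators $D_i \colon \cO_X \to \cO_X$. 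The ring homomorphism condition on $\sigma^{\ast}$ forces each $D_i$ to satisfy a Leibniz-type rule; in particular, setting $D \coloneqq D_1$ produces a $k$-derivation of $\cO_X$, hence a global section of $T_X$.

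Next I would analyze the coassociativity relation $(\sigma^{\ast} \otimes \id)\sigma^{\ast} = (\id \otimes \Delta)\sigma^{\ast}$ in order to recover all $D_i$ from $D$. In the additive $\alpha_p$ case, matching coefficients of the basis elements of $\cO_G \otimes \cO_G$ yields the relations $D_i D_j = \binom{i+j}{i} D_{i+j}$ with $D_{i+j} = 0$ whenever $i+j \geq p$; by induction this gives $D_i = D^i/i!$, and the truncation $t^p = 0$ then forces the single constraint $D^p = 0$. In the multiplicative $\mu_p$ case the analogous expansion shows that the $D_i$ are mutually orthogonal idempotents summing to $\id$, corresponding to the $\bZ/p$-grading on $\cO_X$ induced by the $\mu_p$-action, and the derivation $D = \sum_{i} i\, D_i$ then has eigenvalues in $\bF_p$, which is equivalent to $D^p = D$. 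Conversely, given a derivation satisfying the relevant identity, one assembles a coaction via the formulas above and verifies the Hopf axioms directly.

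The substantive content is this coassociativity computation; it is not a genuine obstacle, but it must be carried out carefully enough to see that the $p$-th power relation in the one-dimensional restricted Lie algebra $\mathrm{Lie}(G)$ (with $D^{[p]} = 0$ for $\alpha_p$ and $D^{[p]} = D$ for $\mu_p$) is exactly what obstructs lifting a derivation to a $G$-action. An alternative route, which I would mention as a conceptual sanity check, is to invoke the general equivalence between actions of an infinitesimal height-one group scheme on a smooth variety and restricted Lie algebra homomorphisms $\mathrm{Lie}(G) \to H^0(X, T_X)$, which reduces the statement to the identification of the restricted Lie algebras of $\mu_p$ and $\alpha_p$. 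Since this material is treated fully in the references cited immediately before the proposition, I would present the proof as a brief unpacking of this standard correspondence rather than a bare-hands computation.
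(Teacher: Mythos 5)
The paper offers no proof of this proposition at all: it is recalled as a standard fact, with the references \cite{DG70}, \cite{RS76}, \cite{MN91} given just before the statement, and your proposal is a correct unpacking of precisely the correspondence treated there (coactions of the height-one group schemes $\alpha_p$ and $\mu_p$ versus derivations $D$ with $D^{[p]}=0$ or $D^{[p]}=D$ in the one-dimensional restricted Lie algebra). The one slip is in your setup paragraph, where you assert that $D_1$ is always the derivation: this is true for $\alpha_p$, but for $\mu_p$ the identity element is the point $t=1$, the $D_i$ are orthogonal idempotents rather than Leibniz operators, and the derivation is $\sum_i i\,D_i$ — you correct this yourself in the case-by-case analysis, so the argument as a whole is sound.
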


\begin{remark}[Quotients by $\mu_p$ and $\alpha_p$]
Let $X$ be a smooth projective variety and $D$ a vector field with $D^p = D$ or $D^p = 0$.
Then there exists the action of $G = \mu_p$ or $\alpha_p$ corresponding to $D$.
Then, by \cite[\S12, Theorem 1]{Mum70}, there exists the quotient $X/G$.
Note that the map $X \to X/G$ corresponds to the foliation spanned $D$.
\end{remark}

\begin{lemma}[{cf.\ \cite[\S1 Lemma 1 and Corollary]{RS76}}]\label{lem:p-closed}
Let $V \subset H^0(T_X)$ be an involutive and $p$-closed $k$-subspace.
Then there exists a vector field $D \in V$ such that $D^p =D$ or $D^p = 0$.
\end{lemma}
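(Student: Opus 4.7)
The plan is to reduce to an abelian restricted Lie subalgebra, where the $p$-th power operation becomes additive, and then to analyse it by elementary linear algebra over the perfect field $k$.

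First, I may assume $V \neq 0$. Fix any nonzero $D \in V$ and let $W \subset V$ denote the $k$-span of the iterated $p$-powers $D, D^p, D^{p^2}, \dots$. Since $X$ is projective, $H^0(T_X)$ is finite dimensional, hence so is $W$; and $W$ is $p$-closed by construction. The crucial point is that every derivation commutes with its own $p$-th iterate as a $k$-linear endomorphism of $\cO_X$, so $[D^{p^i}, D^{p^j}] = 0$ for all $i, j$, and therefore $W$ is an \emph{abelian} restricted Lie subalgebra of $V$. On such an abelian subspace Jacobson's formula collapses to $(E + E')^p = E^p + (E')^p$, and combined with $(\lambda E)^p = \lambda^p E^p$ this shows that $\phi \colon W \to W$, $E \mapsto E^p$, is an additive, $p$-semilinear endomorphism of a finite-dimensional $k$-vector space.

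Next, I would run a Fitting-type decomposition on $\phi$. Because $k$ is algebraically closed (hence perfect), each $\ker \phi^n$ and $\im \phi^n$ is a $k$-subspace of $W$ (for the image one uses the solvability of $\mu^{p^n} = \lambda$ in $k$), and a short dimension count after choosing a basis shows that these chains stabilize just as in the linear case. This yields a direct sum decomposition $W = W_s \oplus W_n$ with $\phi|_{W_n}$ nilpotent and $\phi|_{W_s}$ bijective. If $W_n \neq 0$, then any nonzero element of $\ker(\phi|_{W_n})$ gives a $D' \in V$ with $(D')^p = 0$. Otherwise $W = W_s$, and I invoke the descent principle that a bijective additive $p$-semilinear endomorphism of a finite-dimensional $k$-vector space admits an $\bF_p$-basis of fixed vectors, i.e.\ $W_s = W_s^{\phi} \otimes_{\bF_p} k$ with $W_s^{\phi} = \{E \in W : E^p = E\}$. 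Any nonzero such fixed vector then furnishes a $D' \in V$ with $(D')^p = D'$.

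The only nonroutine ingredient is the descent fact in the bijective case, but it is classical: after choosing coordinates it reduces to solving the $\mathrm{GL}_n$-version of the Artin--Schreier system over the algebraically closed field $k$, which has exactly $p^{\dim_k W_s}$ solutions forming an $\bF_p$-structure on $W_s$ (this is the Lang--Steinberg theorem for $\mathrm{GL}_n$ with respect to the standard Frobenius). No geometric input on $X$ is used beyond projectivity, which serves only to ensure $\dim_k H^0(T_X) < \infty$; compare \cite[\S1]{RS76}.
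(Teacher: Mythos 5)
Your argument is correct, and it is essentially the intended one: the paper gives no proof of this lemma, deferring entirely to the cited Lemma~1 and Corollary of Rudakov--Shafarevich, and your reduction to the abelian restricted subalgebra spanned by $D, D^p, D^{p^2}, \dots$, followed by the semilinear Fitting decomposition and Artin--Schreier/Lang descent in the bijective case, is precisely the classical argument from that reference. (As you implicitly observe, involutivity of $V$ is not needed for this step, since the cyclic $p$-subalgebra you work in is automatically abelian; only $p$-closedness and $\dim_k H^0(T_X)<\infty$ enter.)
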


\begin{definition}[Fixed points]
Set $G = \mu_p$ or $\alpha_p$ and consider an action of $G$ on a smooth variety $X$.
Let $D$ be the corresponding vector field.
Then the \emph{set of $G$-fixed points} is defined as the zero locus of $D$.
\end{definition}

\begin{remark}
A vector field $D \in H^0 (T_X)$ defines a morphism $\varphi_D \colon \cO \to T_X$.
Then the dual of this map $\varphi_D^{\vee} \colon \Omega \to \cO_X$ gives a derivation $f_D \coloneqq \varphi ^{\vee} \circ d$.
Since the sheaf $\Omega_X$ is generated by $d\cO_X$ as an $\cO_X$-module, the following three subschemes are the same:
\begin{enumerate}
 \item The zero locus of $D$.
 \item The closed subscheme whose ideal is $\im (\varphi_D^{\vee})$.
 \item The closed subscheme whose ideal is generated by $\im (f_D)$.
\end{enumerate}
\end{remark}

\begin{lemma}[{\cite[Section~1, Lemma 2]{RS76}}]
There exists a $\mu_p$-action on $\bP^1$.
Moreover, for any $\mu_p$-action on $\bP^1$,  the set of $\mu_p$-fixed points consists of two points.

Similarly, there exists an $\alpha_p$-action on $\bP^1$.
Moreover, for any $\alpha_p$-action on $\bP^1$,  the support of the set of $\alpha_p$-fixed points consists of one point.
\end{lemma}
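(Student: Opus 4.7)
The plan is in two parts: exhibit explicit actions for the existence claims, and then use the dictionary between $p$-closed vector fields and $\mu_p$- or $\alpha_p$-actions (recalled just above the lemma) to analyze an arbitrary such action.

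For existence, on $\bP^1 = \Proj k[u,v]$ the scalar action $t \cdot [u:v] = [tu:v]$ of $\bG_m$ restricts to a nontrivial $\mu_p$-action with reduced fixed locus $\{[1:0],[0:1]\}$, and the translation action $t \cdot [u:v] = [u+tv:v]$ of $\bG_a$ restricts to a nontrivial $\alpha_p$-action whose fixed locus is supported at $[1:0]$.

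For the moreover statements, fix a nontrivial $G$-action with $G = \mu_p$ (resp.\ $\alpha_p$) and let $D \neq 0$ be the corresponding vector field, so $D^p = D$ (resp.\ $D^p = 0$). By definition the $G$-fixed point scheme equals the zero scheme $Z(D)$ of the global section $D \in H^0(\bP^1, T_{\bP^1})$, and since $T_{\bP^1} \simeq \cO_{\bP^1}(2)$ has degree $2$, the scheme $Z(D)$ has length $2$, hence is either two distinct reduced points or a length-$2$ subscheme supported at a single point. At each closed fixed point $x_0$ with local parameter $t$, if $Dt \equiv \lambda t \pmod{t^2}$ then $D$ acts on the cotangent line $m_{x_0}/m_{x_0}^2$ as multiplication by $\lambda$, which is nonzero iff $x_0$ is a simple zero of $D$, and $D^p$ acts there as $\lambda^p$.

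In the $\mu_p$ case, $\lambda^p = \lambda$ forces $\lambda \in \bF_p$; if $\lambda = 0$ at some $x_0$, then by the length count $x_0$ is the only zero, so locally $D = c\, t^2\, \partial/\partial t$ with $c \neq 0$, and iterating yields $D^m t = m!\, c^m t^{m+1}$, hence $D^p = 0$, contradicting $D^p = D \neq 0$. Thus every zero is simple and $Z(D)$ is a reduced pair of points. In the $\alpha_p$ case, $\lambda^p = 0$ forces $\lambda = 0$ at every fixed point, so no zero is simple; the length-$2$ scheme $Z(D)$ must then be supported at a single point. The only nontrivial step is the local computation excluding a double zero in the $\mu_p$ case; the rest is formal bookkeeping with the vector-field dictionary and the degree count on $\cO(2)$.
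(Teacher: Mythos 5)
Your argument is correct. The paper itself gives no proof of this lemma --- it is quoted directly from Rudakov--Shafarevich \cite[Section~1, Lemma~2]{RS76} --- so there is nothing in the text to compare against; your proof is essentially the standard one from that reference: identify the fixed-point scheme with the zero scheme of the corresponding $p$-closed vector field $D$, use $\deg T_{\bP^1} = 2$ to bound the length by $2$, and distinguish the cases $D^{[p]} = D$ (semisimple, two simple zeros) and $D^{[p]} = 0$ (nilpotent, one double zero) via the eigenvalue of $D$ on the cotangent line at a fixed point.

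Two small points worth tightening. First, the statement is only true for \emph{nontrivial} actions (the trivial action corresponds to $D = 0$, whose zero locus is all of $\bP^1$); you implicitly assume $D \neq 0$, which is the intended reading, but you should say so, since the correspondence with vector fields in the preceding proposition does include $D=0$. Second, your normal form ``locally $D = c\,t^2\,\partial/\partial t$'' at a double zero deserves a word of justification: a priori one only gets $D = (c t^2 + O(t^3))\,\partial/\partial t$ for a local parameter $t$. On $\bP^1$ this is harmless because $H^0(\bP^1, T_{\bP^1})$ consists of the fields $(a + bx + cx^2)\,\partial/\partial x$ in an affine coordinate $x$, so after a M\"obius transformation placing the double zero at the origin the field is \emph{exactly} $c x^2\,\partial/\partial x$, and then $D^p x = p!\,c^p x^{p+1} = 0$ gives $D^p = 0$ as you claim. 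With that justification inserted, the exclusion of a double zero in the $\mu_p$ case is complete and the proof is sound.
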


\subsection{Proof of Theorem~\ref{thm:RCC=>SRC}}
Throughout this subsection, we assume that
\begin{itemize}
 \item $X$ is a smooth projective variety with nef tangent bundle and moreover $X$ is rationally chain connected.
\end{itemize}

\begin{lemma}
$N_1(X)$ is spanned by maximally free rational curves.
\end{lemma}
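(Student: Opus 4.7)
The plan is to show that every class in $N_1(X)$ is a linear combination of classes of maximally free rational curves, by combining two ingredients: (a) classes of free rational curves already span $N_1(X)$, and (b) every free rational curve class lies in the span of maximally free ones.

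For (a), note that since $T_X$ is nef, every rational curve $f\colon \bP^1 \to X$ is automatically free (the pullback $f^*T_X = \bigoplus \cO(a_i)$ has all $a_i \geq 0$). Moreover $X$ is FRC by the remark preceding the lemma. The spanning of $N_1(X)$ by classes of rational curves then follows by combining Mori's cone theorem, which controls the $K_X$-negative part of $\cNE(X)$, with a deformation argument using the dense family of free curves to rule out $K_X$-trivial contributions (note that rational curves always satisfy $-K_X \cdot f > 0$ here, since $f^*T_X$ is a nef bundle of positive degree on $\bP^1$).

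For (b), fix a maximally free curve $f$ (class $\alpha$) and any free rational curve $g$ (class $\beta$). Using freeness, deform $g$ to meet $f$ at a general common point $p$, forming a two-tooth comb $C = f \cup_p g$, and smooth it to a rational curve $h \colon \bP^1 \to X$ with $[h] = \alpha + \beta$ (standard smoothing theory for combs of free rational curves). The crux is to verify that $h$ is again maximally free. I would use the identity $\mathrm{pos.rank}(h) = n - h^1(\bP^1, h^*T_X(-2))$ for free rational curves, together with upper semicontinuity of $h^1$ in the smoothing family, to reduce to a cohomology calculation on the nodal fiber. The normalization exact sequence for $C$ applied to $\varphi^*T_X(-2x)$, for a general point $x$ on $f$ with $x \neq p$, yields
\[
H^0(f^*T_X(-2x)) \oplus H^0(g^*T_X) \xrightarrow{\mathrm{ev}_p} T_X|_p \to H^1(C, \varphi^*T_X(-2x)) \to H^1(f^*T_X(-2x)) \oplus H^1(g^*T_X) \to 0.
\]
Freeness of $g$ with $p$ general on $g$ makes the $g$-component of $\mathrm{ev}_p$ surjective and $H^1(g^*T_X) = 0$; maximal freeness of $f$ (of positive rank $r$) gives $H^1(f^*T_X(-2x)) = n - r$. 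Combining these, $H^1(C, \varphi^*T_X(-2x)) = n - r$, so by semicontinuity $\mathrm{pos.rank}(h) \geq r$, and maximality of $r$ forces equality. Hence $h$ is maximally free and $\beta = [h] - \alpha$ lies in the span of maximally free classes.

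The main obstacle will be the cohomological analysis on the nodal fiber and the smoothability of the comb in positive characteristic; both are controlled by the freeness of the components and the nefness of $T_X$. A secondary obstacle is step (a): ruling out $K_X$-trivial contributions to $N_1(X)$ without yet invoking Fano-ness of $X$ (which is established only later in the paper), and for this one falls back on the density of free rational curves connecting pairs of general points under the FRC hypothesis.
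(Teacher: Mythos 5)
Your proposal is correct and follows essentially the same route as the paper: reduce to showing that attaching any free rational curve to a maximally free one and smoothing the resulting comb yields another maximally free curve, verified by semicontinuity of cohomology together with the normalization exact sequence on the nodal curve (your computation with $h^1(h^*T_X(-2x))$ is Serre-dual to the paper's with $h^0(h^*\Omega_X)$, and your surjectivity of $\mathrm{ev}_p$ on the $g$-component plays the role of the paper's vanishing $H^0(g^*\Omega_X(-p))=0$). The only cosmetic differences are that the paper argues contrapositively with a divisor $H$ orthogonal to all maximally free curves, and for your step (a) it simply cites Koll\'ar's theorem that $N_1$ of a rationally chain connected variety is generated by rational curves, which is the precise form of the ``density'' argument you gesture at.
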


\begin{proof}
It is enough to show that a divisor $H$ is numerically trivial if $H \cdot C = 0$ for any maximally free rational curve $C$.

Note that, by \cite[Chapter~IV. Theorem 3.13]{Kol96}, $N_1 (X)$ is generated by rational curves.
Assume $H \not \equiv 0$. Then there exists a rational curve $g \colon \bP^1 \to D \coloneqq g(\bP^1) \subset X$ such that $H \cdot D \neq 0$.
Since the tangent bundle is nef, the curve $D$ is free.

Let $f \colon \bP^1 \to C \coloneqq f(\bP^1) \subset X$ be a maximally free rational curve.
Since $C$ and $D$ are both free rational curves, deformations of these curves cover open subsets of $X$ and hence we may assume that $C \cap D \neq \emptyset$.
Then, by \cite[Chapter~II. Theorem 7.6]{Kol96}, there is a smoothing $C'$ of $C \cup D$.

\begin{claim}
 $C'$ is a maximally free rational curve.
\end{claim}

\begin{proof}[Proof of Claim]
Let $f ' \colon \bP^1 \to C' \subset X$ be the normalization of $C'$, and $h \colon Z \to C \cup D \subset X$ a map from the tree of two rational curves to $C\cup D$.
Then, by the semicontinuity, we have
\[
h^0(f'^*\Omega_X) \leq h^0(h^*\Omega_X).
\]

Let $Z_1$ and $Z_2$ be the irreducible components of $Z$, each of which is isomorphic to $\bP^1$, and $p \in Z_1 \cap Z_2$ be the intersection point.
We may assume $h|_{Z_1} = f$ and $h|_{Z_2} = g$. 
Then we have the following exact sequence:
\[
0 \to g^*\Omega_X (-p) \to h^*\Omega_X \to f^* \Omega_X \to 0.
\]
Since $T_X$ is nef, we have $H^0 (g^*\Omega_X (-p)) =0$ and hence
$h^0 (h^*\Omega_X) \leq h^0 (f^*\Omega_X)$.
Thus we have $h^0 (f'^*\Omega_X) \leq h^0 (f^*\Omega_X)$ and have
\[
\rank \Pos (f') = n -  h^0 (f'^*\Omega_X) \geq n- h^0 (f^*\Omega_X) =\rank  \Pos(f).
\]
Since $f \colon \bP^1 \to X$ is maximally free, so is $f'$.
\end{proof}

Now, $H \cdot C' = 0$ by the assumption on $H$.
On the other hand, $H \cdot C' = H\cdot C + H \cdot D \neq 0$ and we obtain a contradiction.
\end{proof}

\begin{proposition}[Purely inseparable modification]\label{prop:num_flat_subbundle}
 $X^{[1]}$ is also a smooth, rationally chain connected, projective variety with nef tangent bundle.
Moreover, if $X$ is not separably rationally connected, then the subsheaf $\sK \subset T_{X^{[1]}}$ is a numerically flat non-trivial subbundle.
\end{proposition}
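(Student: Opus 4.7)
The plan is to first upgrade $\sD$ to an actual subbundle of $T_X$ whose quotient is numerically flat, and then transfer these positivity properties along the purely inseparable quotient map $f\colon X \to X^{[1]}$.

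For the upgrade step, I would exploit the preceding lemma that $N_1(X)$ is spanned by classes of maximally free rational curves. For such a curve $h\colon \bP^1 \to X$ whose image lies in the open locus $U \subset X$ on which $\sD$ is a subbundle, the defining property of $\sD$ forces $h^*\sD$ to coincide with $\Pos(h^*T_X)$, so that $h^*(T_X/\sD)$ is trivial. By deformation of free curves, every maximally free class admits a representative with image inside $U$, hence $c_1(T_X/\sD)\cdot C = 0$ for every maximally free $C$, and therefore $c_1(T_X/\sD)\equiv 0$. Since $\sD$ is saturated, $T_X/\sD$ is torsion-free, and Lemma~\ref{lem:num_flat_quot} simultaneously yields that $T_X/\sD$ is numerically flat and that $\sD$ is a nef subbundle of $T_X$.

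Now that $\sD$ is an honest subbundle giving an involutive, $p$-closed foliation, the quotient $X^{[1]}=X/\sD$ is smooth by the standard theory of quotients by subbundle foliations in positive characteristic, and the exact sequences of Remark~\ref{rem:foliation} hold globally on $X^{[1]}$; in particular $\sK$ is a subbundle of $T_{X^{[1]}}$ with $f^*\sK \simeq T_X/\sD$. Projectivity and rational chain connectedness of $X^{[1]}$ are immediate because $f$ is a finite surjective universal homeomorphism — any connecting chain of rational curves in $X$ pushes to one in $X^{[1]}$. Nefness of $T_{X^{[1]}}$ then follows from the sequence
\[
0 \to \sK \to T_{X^{[1]}} \to g^*\sigma^*\sD \to 0
\]
via Proposition~\ref{prop:nef_bundle}\ref{prop:nef_bundle4}: the quotient is a pullback of the nef bundle $\sD$, and $\sK$ is nef because $f^*\sK$ is even numerically flat and $f$ is surjective.

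Concerning the moreover clause, numerical flatness of $\sK$ has already been shown in passing, since both $f^*\sK$ and its dual are nef and $f$ is surjective. Non-triviality comes from Shen's criterion, Theorem~\ref{thm:Shen}(1): for smooth $X$, separable rational connectedness is equivalent to the existence of a very free rational curve, so if $X$ is not SRC then $g$ fails to be an isomorphism and the associated foliation $\sK$ on $X^{[1]}$ must be nonzero. The main obstacle is really the first step — establishing $c_1(T_X/\sD)\equiv 0$ — which hinges on having the preceding spanning lemma together with the identification of $h^*\sD$ with the positive part along maximally free curves; everything afterward is a routine transfer of bundle positivity across the universal homeomorphism $f$.
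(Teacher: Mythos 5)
Your proposal is correct and follows essentially the same route as the paper: span $N_1(X)$ by maximally free curves to get $c_1(T_X/\sD)\equiv 0$, apply Lemma~\ref{lem:num_flat_quot} to make $\sD$ a nef subbundle with numerically flat quotient (hence $X^{[1]}$ smooth), and then read off the properties of $\sK$ and $T_{X^{[1]}}$ from the exact sequences of Remark~\ref{rem:foliation}. The only cosmetic difference is that you deduce nefness of $T_{X^{[1]}}$ from the sequence $0\to\sK\to T_{X^{[1]}}\to g^*\sigma^*\sD\to 0$ on $X^{[1]}$ itself, whereas the paper pulls everything back along the surjection $f$ and uses $0\to\sD\to T_X\to f^*T_{X^{[1]}}\to F_X^*\sD\to 0$; both are valid.
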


\begin{proof}
Consider the exact sequence
\[
0 \to \sD  \to T_X \to T_X/\sD  \to 0.
\]
By definition,  $T_X/\sD $ is trivial on the maximally free rational curves.
Note that, since $T_X$ is nef, any rational curve $C$ is free and hence it admits a deformation $C'$ such that $T_X/D$ is locally free on $C'$ \cite[Chapter~II. Proposition 3.7]{Kol96}.
In particular, $c_1(T_X/\sD )$ is nef on any rational curve.
By the previous lemma, we have $c_1(T_X/\sD ) \equiv 0$.
Then, by Lemma \ref{lem:num_flat_quot}, $T_X/\sD $ is a numerically flat vector bundle.
In particular, $X^{[1]}$ is also \emph{smooth}.
Note that $\sD $ is nef by Lemma~\ref{lem:num_flat_quot}.

Recall that there are the following exact sequences
\begin{itemize}
 \item $ 0 \to \sK \to T_{X^{[1]}} \to g^*\sigma^* \sD  \to 0$;
 \item $ 0 \to \sD  \to T_X \to f^* T_{X^{[1]}} \to F_{X}^* \sD  \to 0 $
\end{itemize}
and that $f^* \sK =  T_X/\sD $.
In particular, $\sK$ is a numerically flat vector bundle, which is not trivial if $X$ is not separably rationally connected.

Since $\sD$ is nef, so is $F_{X}^*\sD $.
Hence $f^* T_{X^{[1]}}$ is also nef.
In particular $T_{X^{[1]}}$ is nef.

Since $X$ is rationally chain connected, so is $X^{[1]}$.
\end{proof}

\begin{corollary}[Purely inseparable modification and vector fields]\label{cor:existence:vector_field}
Assume that $X$ is not separably rationally connected.
Then there exists a positive integer $m>0$ such that $X^{[m]}$ is separably rationally connected and $T_{X^{[m]}}$ is nef.
Moreover, $T_{X^{[m]}}$ contains an involutive, $p$-closed trivial subbundle $\bigoplus \cO$.

In particular, there exists a nowhere vanishing vector field $D $ such that $D ^p=D$ or $D^p=0$.
\end{corollary}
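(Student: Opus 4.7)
The plan is to iterate the purely inseparable modification of Proposition~\ref{prop:num_flat_subbundle} to build a tower $X \to X^{[1]} \to X^{[2]} \to \cdots$ of smooth, RCC projective varieties with nef tangent bundle, and then harvest the vector field from the last step at which the tower stops being SRC. By the remark following Definition~\ref{def:RC}, $X$ (and, by iteration, each $X^{[m]}$) is FRC, so Theorem~\ref{thm:Shen}(3) produces a very free rational curve on $X^{[m]}$ for $m \gg 0$; since $X^{[m]}$ is smooth, this is equivalent to SRC. I would let $m$ be the smallest positive integer with $X^{[m]}$ SRC, which exists because $X=X^{[0]}$ is assumed not SRC. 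Setting $Y := X^{[m-1]}$ (not SRC by minimality), Proposition~\ref{prop:num_flat_subbundle} then supplies a non-trivial numerically flat subbundle $\sK \subset T_{Y^{[1]}} = T_{X^{[m]}}$, namely the kernel of the differential described in Remark~\ref{rem:foliation}.

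Next, I would upgrade $\sK$ from numerically flat to trivial. Any numerically flat bundle pulled back to $\bP^1$ is a direct sum of trivial line bundles, so applying Theorem~\ref{thm:SRC:bundle:trivial} on the SRC variety $X^{[m]}$ yields $\sK \simeq \cO^{\oplus r}$ with $r \geq 1$. The involutivity and $p$-closedness of $\sK$ are already recorded in Remark~\ref{rem:foliation}, since $\sK$ is itself a foliation (the one associated with $Y^{[1]} \to Y^{(1)}$). This produces the desired trivial, involutive, $p$-closed subbundle of $T_{X^{[m]}}$.

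Finally, to extract the vector field I would consider the subspace $V := H^0(X^{[m]}, \sK) \subset H^0(X^{[m]}, T_{X^{[m]}})$. Because $\sK$ is closed under the Lie bracket and the $p$-th power operation as a subsheaf of $T_{X^{[m]}}$, the space $V$ is an involutive, $p$-closed $k$-subspace, and $\dim V = r \geq 1$ since $\sK$ is trivial. Lemma~\ref{lem:p-closed} then yields a nonzero $D \in V$ with $D^p = D$ or $D^p = 0$, and via the trivialization $\sK \simeq \cO^{\oplus r}$ the value of such a $D$ at every point corresponds to a fixed nonzero element of $k^r$, so $D$ is nowhere vanishing. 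The main obstacle I expect is the conceptual step of pinpointing SRC of $X^{[m]}$ as exactly the ingredient that upgrades numerical flatness of $\sK$ to triviality as an $\cO$-module; once that hinge is identified, the rest is a chain of citations to the preliminaries.
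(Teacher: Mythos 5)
Your proposal is correct and follows essentially the same route as the paper: iterate Proposition~\ref{prop:num_flat_subbundle} together with Theorem~\ref{thm:Shen} to reach an SRC stage $X^{[m]}$, use the non-SRC predecessor $X^{[m-1]}$ to get a non-trivial numerically flat $\sK \subset T_{X^{[m]}}$, trivialize it via Theorem~\ref{thm:SRC:bundle:trivial}, and extract $D$ from $H^0(\sK)$ via Remark~\ref{rem:foliation} and Lemma~\ref{lem:p-closed}. Your write-up merely makes explicit two points the paper leaves implicit, namely the minimality of $m$ (so that $\sK \neq 0$) and why a nonzero constant section of $\sK \simeq \cO^{\oplus r}$ is nowhere vanishing in $T_{X^{[m]}}$.
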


\begin{proof}
The first assertion follows from Proposition~\ref{prop:num_flat_subbundle} and Theorem~\ref{thm:Shen}.
By Proposition~\ref{prop:num_flat_subbundle}, there exists a numerical trivial subbundle $\sK$ of $T_{X^{[m]}}$.
By Proposition~\ref{thm:SRC:bundle:trivial}, $\sK$ is trivial.
By Remark~\ref{rem:foliation}, $\sK$ is $p$-closed and involutive and the second assertion follows.
The last assertion follows from Lemma~\ref{lem:p-closed}
\end{proof}

The above corollary yields a contradiction to the following proposition:
\begin{proposition}[Fixed points on SRC varieties]\label{prop:SRC:p-closed_vector_field}
Let $X$ be a smooth separably rationally connected variety.
Then any vector field $D$ with $D^p =D$ or $D^p=0$ admits a zero point. 
\end{proposition}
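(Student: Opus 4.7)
I argue by contradiction: suppose $D$ has no zero on $X$. Then $D$ generates a fixed-point-free action of the infinitesimal group scheme $G = \mu_p$ (when $D^p = D$) or $G = \alpha_p$ (when $D^p = 0$) on $X$, since the fixed locus of the action equals the vanishing locus of $D$. Equivalently, the quotient map $\pi\colon X \to Y := X/G$ is a purely inseparable $G$-torsor of degree $p$.

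The plan is to exhibit a $G$-equivariant morphism $\phi\colon \bP^1 \to X$, where $\bP^1$ is endowed with a nontrivial $G$-action from the preceding lemma (whose $G$-fixed locus is non-empty). Once such a $\phi$ is in hand, it would send a $G$-fixed point of $\bP^1$ to a $G$-fixed point of $X$, contradicting the fixed-point-freeness of the action on $X$.

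To produce $\phi$: since $X$ is SRC, pick a very free rational curve $f_0\colon\bP^1 \to X$, so that $\operatorname{Hom}(\bP^1,X)$ is smooth at $f_0$ with tangent space $H^0(\bP^1, f_0^*T_X)$ and $f_0^*T_X$ ample. Equip $\operatorname{Hom}(\bP^1,X)$ with the $G$-action $g\cdot\phi = g_X\circ\phi\circ g_{\bP^1}^{-1}$, whose fixed points are precisely the equivariant morphisms; the infinitesimal obstruction to equivariance at $\phi$ is the tangent vector $\phi^*D - d\phi(D_{\bP^1}) \in H^0(\bP^1, \phi^*T_X)$, where $D_{\bP^1}$ generates the $G$-action on $\bP^1$. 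A key input is that pullbacks of $\pi$ to $\bP^1$ are trivial torsors: the Kummer sequence $1 \to \mu_p \to \bG_m \xrightarrow{(-)^p} \bG_m \to 1$ gives $H^1(\bP^1,\mu_p) \cong \Pic(\bP^1)[p] = 0$, while $0 \to \alpha_p \to \bG_a \xrightarrow{F}\bG_a \to 0$ combined with $H^1(\bP^1,\cO_{\bP^1}) = 0$ and surjectivity of Frobenius on $k$ yields $H^1(\bP^1,\alpha_p) = 0$. Using these vanishings together with the ampleness of $f_0^*T_X$ and the unobstructedness of very free curves, one deforms $f_0$ to a $G$-equivariant $\phi$, mirroring Kollár's treatment of the étale case in \cite[Corollaire 3.6]{Deb03}.

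The main obstacle will be making this deformation rigorous, i.e., passing from infinitesimal lifts of the equivariance condition to an actual $G$-equivariant morphism. This is especially subtle for $G = \alpha_p$ (non-reductive and infinitesimal), where standard fixed-point theorems for algebraic groups do not directly apply; a careful moduli-theoretic or descent argument on a $G$-invariant component of $\operatorname{Hom}(\bP^1,X)$ (or a suitable projective compactification thereof carrying the $G$-action) is needed to produce the fixed point in the open locus of honest morphisms rather than in a degenerate boundary stratum.
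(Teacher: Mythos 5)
Your overall strategy coincides with the paper's up to the decisive step: both arguments reduce the claim to producing a $G$-equivariant morphism $\bP^1 \to X$, where $G=\mu_p$ or $\alpha_p$ acts on $\bP^1$ with non-empty fixed locus, and then conclude because the image of a fixed point of $\bP^1$ is a fixed point of $X$. The gap is exactly where you flag it: you never construct that equivariant morphism. The deformation-theoretic route through $\operatorname{Hom}(\bP^1,X)$ would require a fixed point of the induced $G$-action on (a compactification of) the Hom scheme, and for infinitesimal group schemes there is no general fixed-point theorem to invoke --- the whole content of the proposition is that $\mu_p$- and $\alpha_p$-actions on smooth projective varieties \emph{can} be fixed-point-free, so projectivity of a compactification buys nothing, and neither linear reductivity of $\mu_p$ nor any property of the non-linearly-reductive $\alpha_p$ produces the fixed point you need. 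The vanishings $H^1_{\mathrm{fl}}(\bP^1,\mu_p)=H^1_{\mathrm{fl}}(\bP^1,\alpha_p)=0$ that you compute only show that $G$-torsors over $\bP^1$ are trivial; a trivialization lifts a rational curve from $X/G$ to $X$, but such a lift is equivariant for the \emph{trivial} $G$-action on the source, and an equivariant map from $\bP^1$ with trivial action must land in the fixed locus --- which is precisely what you are trying to show is non-empty. So the infinitesimal computation does not close the loop, and the "main obstacle" you name at the end is not a technical loose end but the entire content of the proof.

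The paper closes this gap by a different mechanism, namely Koll\'ar's trick from the proof of simple connectedness of SRC varieties \cite[Corollaire 3.6]{Deb03}: form the diagonal quotient $(X\times\bP^1)/G$, which fibers over $\bP^1/G\cong\bP^1$ with general fiber the separably rationally connected variety $X$; the section theorem of \cite{GHS03,DJS03} supplies a section $t$ of this fibration over $\bP^1/G$, and pulling $t$ back along $\psi\colon\bP^1\to\bP^1/G$ yields a $G$-equivariant section $s$ of $\pr_2\colon X\times\bP^1\to\bP^1$, so that $\pr_1\circ s$ is the desired equivariant morphism. If you want to salvage your write-up, replace the fixed-point search on $\operatorname{Hom}(\bP^1,X)$ by this application of the Graber--Harris--Starr / de Jong--Starr theorem to the quotient family; the rest of your setup (the dictionary between $p$-closed vector fields, $G$-actions, and zero loci, and the use of a $G$-action on $\bP^1$ with a fixed point) matches the paper and can be kept as is.
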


\begin{proof}
The proof proceeds as Koll\'ar's proof of simple connectedness of SRC varieties (cf. \cite[Corollaire 3.6]{Deb03}).

There is an action of $G = \mu_p$ or $\alpha_p$ that corresponds to $D$.
Fix an action of $G$ onto $\bP^1$.
Note that the set of $G$-fixed points on $\bP^1$ is not empty.

Consider the action of $G$ on $X \times \bP^1$ and the quotient $(X \times \bP^1)/G$.
Then we have the following commutative diagram:
\[
\xymatrix{
X \times \bP^1 \ar[d]_-{\pr_2}  \ar[r]^-{\varphi} &  (X\times \bP^1)/G \ar[d]^-{\pi} \\
\bP^1 \ar[r]^-{\psi} &  \bP^1/G \simeq \bP^1
}
\]
Since the general $\pi$-fiber is isomorphic to a separably rationally connected variety $X$, $\pi$ admits a section $t \colon \bP^1 \to (X\times \bP^1)/G$ \cite{GHS03,DJS03}.

Then the pull-back of this section by $\psi$ gives a $G$-equivariant  section $s \colon \bP^1 \to X \times \bP^1$ of $\pr_2$.
 Hence $\pr_1 \circ s$ is also $G$-equivariant.
Therefore we have a $G$-equivariant morphism $\bP^1 \to X$.
Since any $G$-action on $\bP^1$ admits a $G$-fiixed point, there exists a $G$-fixed point on $X$.
\end{proof}

\begin{proof}[Proof of Theorem~\ref{thm:RCC=>SRC}]
The assertion follows from Corollary~\ref{cor:existence:vector_field} and Proposition~\ref{prop:SRC:p-closed_vector_field}.
\end{proof}

\begin{proof}[Proof of Corollary~\ref{cor:SRC}]
The first assertion follows from a theorem of Koll\'ar \cite[Corollaire 3.6]{Deb03} or \cite[Corollary~5.3]{She10}.
The second assertion follows from Corollary~\ref{cor:SRC:h1}.
The last assertion follows from Theorem~\ref{thm:SRC:bundle:trivial}.
\end{proof}

\section{Existence and smoothness of extremal contractions}\label{sect:contractions}
In this section, we will prove Theorem~\ref{thm:contraction}.
Throughout this section,
\begin{itemize}
\item $X$ is a smooth projective variety over $k$, and the tangent bundle $T_X$ is nef.
\end{itemize}

We will divide the proof into several steps.

\begin{lemma}\label{lem:etale_descent}
Let $M \to N$ be a finite \'etale Galois morphism between smooth projective varieties over $k$ and $G$ its Galois group.
Assume that $M$ admits a $G$-equivariant contraction $f \colon M \to S$.
Then there exist a normal projective variety $T$, a finite  morphism $S \to T$ and a contraction $g \colon N \to T$  such that the following diagram commutes:
\[
\xymatrix{
M \ar[d]_{f} \ar[r]^{{\text{\'etale}}}  & N \ar[d]^{g} \\
S \ar[r]_{{\text{finite}}}   & T \\
}
\]
In particular, if $f$ is smooth, then any fiber of $g$ is irreducible and is the image of an $f$-fiber.
\end{lemma}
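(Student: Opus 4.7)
The plan is to take $T$ to be the geometric quotient $S/G$ for the $G$-action on $S$ that is implicit in the hypothesis that $f$ is $G$-equivariant. Since $M$ is projective and $f$ is a contraction, $S$ is projective; as $G$ is finite, every $G$-orbit in $S$ is a finite set of points in a projective variety and is therefore contained in an affine open, so by averaging we obtain $G$-stable affine opens covering $S$. Hence the geometric quotient $\sigma\colon S \to T \coloneqq S/G$ exists as a finite surjective morphism onto a normal projective variety. Writing $\pi\colon M \to N$ for the given étale Galois cover, the composition $\sigma\circ f$ is $G$-invariant because $f$ is $G$-equivariant and $\sigma$ contracts $G$-orbits, so the universal property of the categorical quotient $\pi$ produces a unique morphism $g\colon N \to T$ with $g\circ \pi = \sigma\circ f$. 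It is surjective because $\sigma \circ f$ is, and projective because $N$ and $T$ are both projective.

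The essential content is the identity $g_*\mathcal{O}_N = \mathcal{O}_T$. Pushing the relation $f_*\mathcal{O}_M = \mathcal{O}_S$ forward under $\sigma$ gives $g_*\pi_*\mathcal{O}_M = \sigma_*\mathcal{O}_S$ as sheaves with compatible $G$-actions. Since taking $G$-invariants is defined sectionwise, it commutes with every pushforward, so applying $(-)^G$ yields $g_*(\pi_*\mathcal{O}_M)^G = (\sigma_*\mathcal{O}_S)^G$. The left side equals $g_*\mathcal{O}_N$ because $\pi$ is an étale Galois cover with group $G$, and the right side equals $\mathcal{O}_T$ by the defining property of the geometric quotient $T = S/G$. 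This proves $g$ is a contraction.

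For the last assertion, fix $t \in T$ and pick any $s \in \sigma^{-1}(t)$. The commutativity of the square together with the surjectivity of $\pi$ gives
\[
g^{-1}(t) \;=\; \pi\bigl(f^{-1}(\sigma^{-1}(t))\bigr).
\]
Since $\sigma^{-1}(t) = G\cdot s$ and $f$ is $G$-equivariant, $f^{-1}(\sigma^{-1}(t)) = G\cdot f^{-1}(s)$; as $\pi$ is $G$-invariant, this reduces to $g^{-1}(t) = \pi(f^{-1}(s))$, i.e.\ the fiber of $g$ over $t$ is the image of a single $f$-fiber. When $f$ is smooth, the condition $f_*\mathcal{O}_M = \mathcal{O}_S$ forces $f^{-1}(s)$ to be smooth and geometrically connected, hence irreducible, so $g^{-1}(t)$ is irreducible as well.

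The one place that requires care is the existence of the quotient $S/G$; beyond that every step is formal. Because $S$ is projective and $G$ is finite, the existence of $G$-stable affine opens is standard, and the rest of the argument is pure descent: $G$-invariance of the structure sheaf under $\pi$ and $\sigma$ together with $f_*\mathcal{O}_M = \mathcal{O}_S$ transports the contraction property from $f$ to $g$.
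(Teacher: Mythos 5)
Your proof is correct, and it starts from the same place as the paper's: form the quotient $S\to S/G$ and use $N=M/G$ to get $g\colon N\to S/G$. Where you diverge is in how you finish. The paper does not try to show that $N\to S/G$ already has connected Stein factorization; it simply takes the Stein factorization $N\to T\to S/G$ (so a priori $T$ might sit strictly between $N$ and $S/G$) and then invokes the rigidity lemma \cite[Lemma 1.15]{Deb01} to produce the finite morphism $S\to T$. You instead prove directly that $g_{*}\cO_{N}=\cO_{S/G}$ by the descent computation
\[
g_{*}\cO_{N}=g_{*}\bigl((\pi_{*}\cO_{M})^{G}\bigr)=\bigl(g_{*}\pi_{*}\cO_{M}\bigr)^{G}=\bigl(\sigma_{*}f_{*}\cO_{M}\bigr)^{G}=(\sigma_{*}\cO_{S})^{G}=\cO_{S/G},
\]
which is legitimate: taking $G$-invariants is a kernel of sheaf maps, hence commutes with the left-exact pushforward, and the identification $g_{*}\pi_{*}\cO_{M}\cong\sigma_{*}\cO_{S}$ is $G$-equivariant precisely because $f$ is. Your route buys a sharper statement ($T=S/G$ on the nose, so the finite morphism $S\to T$ is just the quotient map) and avoids both the Stein factorization and the rigidity lemma; the paper's route is shorter to write and defers all the work to standard black boxes. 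Your verification of the final assertion, $g^{-1}(t)=\pi(f^{-1}(s))$ for any $s\in\sigma^{-1}(t)$ via $G$-equivariance of $f$, matches what the paper implicitly uses. The only points worth making explicit are the normality and projectivity of $S/G$ (which follow since $S=\operatorname{Spec}$-locally the normal ring $f_{*}\cO_{M}$ and quotients of projective varieties by finite groups are projective), but these are routine.
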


\begin{proof}
Consider the quotient map $S \to S/G$.
Then there exists a morphism $N \to S/G$ since $N = M/G$.
Let $N \to T \to S/G$ be the Stein factorization of the map $N \to S/G$.
Then, by the rigidity lemma \cite[Lemma 1.15]{Deb01}, there exists a morphism $S \to T$ as desired.
\end{proof}

In the following we consider a diagram of the following form:
\begin{equation}\label{eq:diagram}
\vcenter{
\xymatrix{
\sU  \ar[d]_{p} \ar[r]^{q} &X \\
\sM,
}}
\end{equation}
where $p$ is a smooth $\bP^1$-fibration, $\sU$ and $\sM$ are \emph{projective} varieties, and any $p$-fiber is not contracted to a point by $q$.
For example, an unsplit family of rational curves gives a diagram as above.
Two closed points $x$, $y \in X$ are said to be \emph{$\sM$-equivalent} if there exists a connected chain of rational curves parametrized by $\sM$ which contains both $x$ and $y$.
Given a set $\{ (\sM_i \xleftarrow{p_i} \sU_i \xrightarrow{q_i} X)\}_{i=1,\dots,m}$ of diagrams as above, we similarly define the $(\sM_1, \dots, \sM_m)$-equivalence relation as follows:
Two closed points $x$, $y \in X$ are said to be \emph{$(\sM_1, \dots, \sM_m)$-equivalent} if there exists a connected chain of rational curves parametrized by $\sM_1 \cup \dots \cup\sM_m$ which contains both $x$ and $y$.

Note that, if the diagram is given by a family of unsplit rational curves, then $q$ is a smooth morphism and $\sU$ and $\sM$ are smooth projective varieties by \cite[Chapter~II. Theorem~1.7, Proposition~2.14.1, Theorem~2.15, Corollary~3.5.3]{Kol96}.

\begin{proposition}[Existence of contractions]\label{prop:existence_contraction}
Let $C$ be a curve parametrized by $\sM$ and set $R \coloneqq \bR_{\geq 0} [C] \subset \cNE(X)$.
If $q$ is equidimensional with irreducible fibers, then $R$ is an extremal ray and there exists a contraction of $R$.

Moreover the contraction is equidimensional with irreducible fibers, any fiber $F$ of the contraction (with its reduced structure) is an $\sM$-equivalent class, and $\rho(F) = 1$.
\end{proposition}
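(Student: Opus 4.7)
The plan is to construct the contraction of $R$ as the quotient of $X$ by the $\sM$-chain equivalence relation; the hypothesis that $q$ is equidimensional with irreducible fibers is precisely what upgrades this quotient from a rational map (as in the general MRCC construction in positive characteristic) to an honest morphism. First I would inductively define families $(\sM^{(n)}, \sU^{(n)}, p^{(n)}, q^{(n)})$, where $\sM^{(n)}$ parametrizes connected chains of $n$ rational curves from $\sM$, together with the universal family $\sU^{(n)} \to \sM^{(n)}$ and evaluation $q^{(n)} \colon \sU^{(n)} \to X$. By induction on $n$, using equidimensionality with irreducible fibers of $q$, I would show that each $q^{(n)}$ remains surjective and equidimensional with irreducible fibers: attaching an $\sM$-curve at each point of an irreducible equidimensional fiber yields again an irreducible equidimensional fiber.

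Next, let $e_n$ denote the dimension of the ``$n$-step $\sM$-neighborhood'' $q^{(n)}((q^{(n)})^{-1}(x)) \subset X$. Since $e_n$ is nondecreasing and bounded by $\dim X$, it stabilizes at some $n_0$, and for $n \geq n_0$ the subsets $F_x \coloneqq q^{(n)}((q^{(n)})^{-1}(x))$, equipped with their reduced structure, are the $\sM$-chain equivalence classes; by the equidimensionality they are irreducible and of constant dimension. I would then argue that the assignment $x \mapsto [F_x]$ defines a morphism $X \to \Chow(X)$, using that the closed image of $\sU^{(n_0)} \to X \times X$ projects to $X$ equidimensionally with irreducible fibers, so that $\{F_x\}$ is an algebraic family of cycles of constant dimension. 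Taking the Stein factorization of the induced map $X \to \Chow(X)$ onto its image yields a contraction $\pi \colon X \to Y$ whose reduced fibers are exactly the $F_x$.

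For the Picard rank claim: any irreducible curve $D \subset F$ is connected to $C$ by an $\sM$-chain inside $F$, and since $\sM$ is irreducible all $\sM$-curves share the common numerical class $[C] \in N_1(X)$, so $[D]$ is proportional to $[C]$. Hence $\rho(F) = 1$ and the image of $N_1(F) \to N_1(X)$ is $R$. Consequently $\pi$ has relative Picard number $1$ and contracts exactly the ray $R$, so $R$ is an extremal ray of $\cNE(X)$ and $\pi$ is its contraction. The hard part is the middle paragraph: producing an honest morphism from the equivalence-class assignment. Equidimensionality with irreducible fibers of $q$ (inherited by each $q^{(n)}$) is the essential input, as without it the family $\{F_x\}$ can jump in dimension and one recovers only a rational quotient.
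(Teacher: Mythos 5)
Your construction of the quotient (chain families $\sM^{(n)}$, stabilization of the $n$-step neighborhoods, the induced map to the Chow variety, Stein factorization) is essentially the argument the paper invokes by citing \cite[Theorem~2.2]{Kan18Kequiv}, and your use of equidimensionality with irreducible fibers of $q$ as the input that makes the equivalence classes into an algebraic family is the right idea. The gap is in the last paragraph, and it sits at the heart of the proposition. You write that any irreducible curve $D \subset F$ is connected to $C$ by an $\sM$-chain, and that since all $\sM$-curves have the common class $[C]$, the class $[D]$ is proportional to $[C]$. This is a non sequitur: being joined to $C$ by a chain of $\sM$-curves is a statement about the \emph{points} of $D$, not about its numerical class, and $D$ need not itself be a member of $\sM$ or a degeneration of one. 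Nothing in your argument rules out a curve $D \subset F$ whose class in $N_1(X)$ is independent of $[C]$; but exactly this must be excluded, both to get $\rho(F)=1$ and, more importantly, to know that the curves contracted by $\pi$ are precisely those with class in $R$ --- without that, you cannot conclude that $R$ is extremal or that $\pi$ is \emph{the} contraction of $R$.

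The paper closes this gap with \cite[Chapter~IV, Proposition~3.13.3]{Kol96}: if $F$ is chain connected by curves of the family $\sM$, then $A_1(F)_{\bQ}$ is \emph{generated} by the curves of $\sM$ lying in $F$; this is a genuine theorem (proved by spreading an arbitrary $1$-cycle along the connecting family), not a formal consequence of chain connectedness. One then still has to see that these generators are all proportional \emph{in $N_1(F)$}, not merely in $N_1(X)$; this uses the irreducibility (in particular connectedness) of the $q$-fibers, which makes all $\sM$-curves through a fixed point of $F$ algebraically equivalent inside $F$, and chaining gives $N_1(F) \simeq \bR$. You should replace your proportionality claim by an appeal to this result (or reprove it), after which the extremality of $R$ and the identification of $\pi$ as its contraction follow as you indicate.
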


\begin{proof}
Applying the same argument as in \cite[Theorem~2.2]{Kan18Kequiv}, we obtain a projective morphism $f \colon X \to Y$ onto a projective normal variety $Y$ such that each fiber is an $\sM$-equivalent class; moreover $f$ is equidimensional with irreducible fibers.
Let $F$ be a fiber of $f$.
Then, by \cite[Chapter~IV. Proposition~3.13.3]{Kol96}, the group of rational equivalence classes of algebraic $1$-cycles with rational coefficients $A_1(F)_{\bQ}$ is generated by curves in $\sM$.
Since fibers of $q$ are connected, $N_1(F) \simeq \bR$.
In particular, $R$ is an extremal ray and $f$ is the contraction of $R$.
\end{proof}

\begin{proposition}[Smoothness of contractions]\label{prop:smoothness_contraction}
Let $C$ be a curve parametrized by $\sM$ and set $R \coloneqq \bR_{\geq 0} [C] \subset \cNE(X)$.

Assume that $R$ is extremal and  the contraction $f \colon X \to Y$ of $R$ exists.
Assume moreover that
\begin{itemize}
 \item $q$ is smooth;
 \item $f$ is equidimensional with irreducible fibers;
 \item any $f$-fiber is an $\sM$-equivalent class.
\end{itemize}

Then the following hold:
\begin{enumerate}
 \item $f$ is smooth;
 \item any fiber $F$ of $f$ is an SRC Fano variety with nef tangent bundle;
 \item $T_Y$ is again nef.
\end{enumerate}
\end{proposition}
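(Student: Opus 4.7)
I will prove (2), (1), (3) in that order; the heart is (2). Fix any fiber $F$ of $f$ with its reduced structure $F_{\mathrm{red}}$. Since fibers of $f$ are $\sM$-equivalence classes and curves parametrized by $\sM$ cover $F$, the variety $F_{\mathrm{red}}$ is rationally chain connected. To invoke Theorems~\ref{thm:RCC=>SRC} and~\ref{thm:RCC=>Fano}, I need $F_{\mathrm{red}}$ to be smooth with $T_{F_{\mathrm{red}}}$ nef. For smoothness, I would use that $q$ is smooth and $p$ is a smooth $\bP^1$-fibration (so $\sU$ and $\sM$ are smooth): then $q^{-1}(F)\subset\sU$ is a $\bP^1$-bundle over $\sM_F:=p(q^{-1}(F))\subset\sM$, and the smoothness of $F_{\mathrm{red}}$ is equivalent to that of $\sM_F$, which one analyzes infinitesimally via the $\sM$-equivalence structure. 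For nefness of $T_{F_{\mathrm{red}}}$, I appeal to Lemma~\ref{lem:num_flat_quot}: since $T_X|_{F_{\mathrm{red}}}$ is nef, it suffices to check that the quotient $T_X|_{F_{\mathrm{red}}}/T_{F_{\mathrm{red}}}$ has numerically trivial determinant, and this follows from a degree count along the free curves parametrized by $\sM_F$, which cover $F_{\mathrm{red}}$ and span $N_1(F_{\mathrm{red}})$. Once $F_{\mathrm{red}}$ is smooth, RCC and has nef tangent bundle, Theorem~\ref{thm:RCC=>SRC} gives SRC and Theorem~\ref{thm:RCC=>Fano} gives the Fano property, finishing (2).

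For (1), with every fiber smooth of dimension $\dim X-\dim Y$ by equidimensionality, smoothness of $f$ follows by a flatness argument combined with the fiber criterion for smoothness, adapting the characteristic-zero proof of \cite[Lemma~4.12]{SW04}. For (3), once $f$ is smooth, the relative tangent sequence
\[
0\to T_{X/Y}\to T_X\to f^*T_Y\to 0
\]
realizes $f^*T_Y$ as a quotient of the nef bundle $T_X$, hence $f^*T_Y$ is nef, and surjectivity of $f$ then forces $T_Y$ to be nef.

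The main obstacle is the smoothness of every fiber of $f$ in step (2). In positive characteristic generic smoothness fails, so one cannot simply argue that the general fiber is smooth and propagate this; instead the argument must combine the smoothness of the parametrizing family $\sM$ with the nefness of $T_X$ simultaneously to control singularities uniformly over all fibers. The alignment between $T_{F_{\mathrm{red}}}$ and the positive directions of $T_X$ on curves in $\sM_F$, together with Theorem~\ref{thm:RCC=>SRC} to rule out pathological purely inseparable phenomena, is the key mechanism that makes the argument work.
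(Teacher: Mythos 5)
There is a genuine gap, and it sits exactly where the positive-characteristic difficulty lives. Your step (1) deduces smoothness of $f$ from the smoothness of every \emph{reduced} fiber $F_{\red}$ ``by a flatness argument combined with the fiber criterion for smoothness.'' But the fiber criterion requires the \emph{scheme-theoretic} fibers to be smooth, and nothing you have said rules out that every scheme-theoretic fiber of $f$ is non-reduced (as happens, e.g., for wild conic bundles and quasi-elliptic fibrations). In characteristic zero this is handled by generic smoothness, which is why the argument of \cite[Lemma~4.12]{SW04} closes; here it is precisely the point that must be proved. The paper's proof devotes its entire second half to this: it forms the graph $V \subset X_1 \times X_2$ of the $\sM$-equivalence relation as an iterated image under the smooth maps $P$, $Q$, uses flatness of $P,Q$ to see that the geometric generic fiber of $V/X_2$ is reduced, obtains on the locus $V^0$ of reduced fibers a classifying morphism $X_2^0 \to \Hilb(X_1)$ factoring through $Y^0$, and then shows by an explicit surjectivity of the tangent map $T_{X_2^0}\otimes k(x_2) \to T_{\Hilb(X_1)}\otimes k([p_2^{-1}(x_2)])$ (using that $N_{p_2^{-1}(x_2)/X_1^0}$ is \emph{trivial}, which comes from SRC of the fiber via Theorem~\ref{thm:SRC:bundle:trivial}) that $X_2^0 \to \Hilb(X_1)$ is smooth, hence $Y^0 \to \Hilb(X_1)$ is generically \'etale and $f$ is generically smooth. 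Only then does unobstructedness of $[F_{\red}]$ in $\Hilb(X)$ let one compare an arbitrary fiber with a reduced general fiber and conclude via \cite[Chapter~I. Theorem~6.5]{Kol96} that every fiber is reduced. Your closing paragraph acknowledges that generic smoothness fails but offers only the heuristic that nefness of $T_X$ and Theorem~\ref{thm:RCC=>SRC} ``rule out pathological purely inseparable phenomena''; no mechanism is supplied, and in particular the role of SRC in trivializing the normal bundle and feeding the Hilbert-scheme tangent computation is absent from your outline.

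Two smaller points. First, invoking Theorem~\ref{thm:RCC=>Fano} to get the Fano property of $F_{\red}$ is circular: that theorem is proved via Theorem~\ref{thm:simplicial}, which rests on Theorem~\ref{thm:contraction}, which rests on the present proposition. The paper instead gets $-K_{F_{\red}} \equiv -K_X|_{F_{\red}}$ by adjunction from the numerical flatness of $N_{F_{\red}/X}$, which is all that is needed. Second, your route to nefness of $T_{F_{\red}}$ (a degree count showing $c_1$ of the quotient is numerically trivial, then Lemma~\ref{lem:num_flat_quot}) is essentially the paper's route packaged inside the citation of \cite[Lemma~4.12]{SW04}, and your part (3) matches the paper; these parts are fine.
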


\begin{proof}
 Let $F$ be a scheme-theoretic fiber of $f$ and $F_\red$ the reduced scheme associated to $F$.

Note that $F_{\red}$ is an $\sM$-equivalent class.
By the same argument as in \cite[Lemma~4.12]{SW04}, one can show that $F_{\red}$ is smooth, and the normal bundle $N_{F_{\red}/X}$ is numerically flat.
Consider the standard exact sequence
\begin{align*}
0 \to T_{F_{\red}}\to T_X|_{F_{\red}}\to N_{F_{\red}/X}\to 0.
\end{align*}
Combining this sequence with Proposition~\ref{prop:nef_bundle}~\ref{prop:nef_bundle4}, we see that the tangent bundle $T_{F_{\red}}$ is nef.
Also, by adjunction, $-K_{F_{\red}} \equiv-K_X|_{F_{\red}}$.
Thus $F_{\red}$ is a smooth Fano variety with nef tangent bundle.
Then Theorem~\ref{thm:RCC=>SRC} implies that $F_{\red}$ is separably rationally connected. Applying Theorem~\ref{thm:SRC:bundle:trivial} and Corollary~\ref{cor:SRC:h1}, we see that $N_{F_{\red}/X}$ is trivial and $H^1(F_{\red},\cO_{F_{\red}})=0$.
In particular, the Hilbert scheme is smooth of dimension $\dim Y$ at $[F_{\red}]$.

Assume for a moment that $f$ is generically smooth.
Then we can conclude as follows;
Since $F_{\red}$ is unobstructed, it is numerically equivalent to a general fiber which is reduced by the assumption.
This is possible if and only if $F$ is generically reduced.
By \cite[Chapter~I. Theorem~6.5]{Kol96}, $f$ is smooth.
Finally, $T_Y$ is nef by Proposition \ref{prop:nef_bundle}.

Thus it is enough to prove that $f$ is generically smooth (cf.\ \cite{Sta06}).
Let $X_2 = X$ be a copy of $X$ and denote by $X_1$ the original variety $X$.
Then by taking product with $X_2$,  we have the following diagram:
\[
\xymatrix{
\sU \times X_2 \ar[d]_{P \coloneqq p \times \id} \ar[r]^{Q \coloneqq q\times \id  } &X_1 \times X_2\\
\sM \times X_2.
}
\]
Set $V_0 \coloneqq \Delta \subset X_1 \times X_2$ (the diagonal) and $V_{i+1} \coloneqq  Q(P^{-1}(P(Q^{-1}(V_{i}))))$ inductively.
Then, for $m \gg 0$, $V_m = V_{m+1} = \cdots =V$ is the graph of the $\sM$-equivalence relation (with its reduced scheme structure).
Denote by $V(x)$ the $\sM$-equivalence class of $x \in X$, which is the fiber of $V/X_2$ over the point $x$.
We will denote by $p_1 \colon V \to X_1$ and $p_2 \colon V \to X_2$ the natural projections respectively.

Note that $P$ and $Q$ are smooth, and that taking scheme theoretic image commutes with flat base changes.
Thus we see that the geometric generic fiber of  $V /X_2$ is reduced (cf.\ \cite[Lemma~2.3]{Sta06}).
Note that, by \cite[Chapter~I. Theorem~6.5]{Kol96}, $p_2$ is smooth at $(x,y) \in V$ if and only if $p_2^{-1}(y)$ is generically reduced.
In particular, $p_2^{-1}(y)$ is generically reduced if and only if  $p_2^{-1}(y)$ is reduced.

Set
\[
V^0 \coloneqq \{(x,y) \in V  \mid \text{$p_2^{-1}(y)$ is reduced} \} = \{(x,y) \in V  \mid \text{$p_1^{-1}(x)$ is reduced} \}.
\]
Then $X_i^0\coloneqq p_i(V^0)$ is open in $X_i$ and $p_i^{-1}(X_i^0) = V^0$.
Note that the subset $Y^0 \coloneqq f(X_i^0)$ is independent of $i=1$, $2$ because of the symmetry.
Moreover, $Y^0$ is open in $Y$ and $X_i^0 = f^{-1}(Y^0)$ since $X_i \setminus X_i^0$ is a closed subset, which is a union of $\sM$-equivalent classes.

Now, since $V^0/X_2^0$ is a smooth projective family of subschemes in $X_1$, we have a morphism $X_2^0 \to \Hilb(X_1)$.
Then this map factors $Y^0$ by the rigidity lemma \cite[Lemma 1.15]{Deb01}.

Consider the composite $\gamma \colon p_2^* T_{X_2^0}|_{V^0} \to T_{X _1^0 \times X_2^0}|_{V^0} \to N_{V^0/X_1^0 \times X_2^0}$ of natural homomorphisms.
Then the restriction of $\gamma$ to $p_1^{-1}(x_1)$ gives the surjection
\[
T_{X_2^0}|_{p_1^{-1}(x_1)} \to N_{p_1^{-1}(x_1)/X_2^0}.
\]
Thus $\gamma$ is surjective.
On the other hand, if we restricts $\gamma$ to $p_2^{-1}(x_2)$, we have a surjection
\[
T_{X_2^0}|_{p_2^{-1}(x_2)} \to N_{p_2^{-1}(x_2)/X_1^0}
\]
between trivial vector bundles.
Hence, by taking global sections, we have a surjection
\[
T_{X_2^0} \otimes k(x_2) \simeq H^0(T_{X_2^0}|_{p_2^{-1}(x_2)}) \to H^0(N_{p_2^{-1}(x_2)/X_1^0}) \simeq T_{\Hilb(X_1)}\otimes k([p_2^{-1}(x_2)])
\]
of vector spaces, and hence the map $X_2^0 \to  \Hilb(X_1)$ is a smooth morphism.
Recall that we have the factorization  $X_2^0 \to Y^0 \to  \Hilb(X_1)$ with a quasi-finite morphism $Y^0 \to \Hilb(X_1)$.
Then, since $X^0_2 \to Y^0$ is generically flat, the morphism $Y^0 \to \Hilb(X_1)$ is generically \'etale.
This implies that $X^0_2 \to Y^0$ is generically smooth.
\end{proof}

\begin{proposition}\label{prop:unsplit=>extremal}
 Let $\sM$ be a family as in the diagram \eqref{eq:diagram} and $[C] \in \sM$ be a curve parametrized by $\sM$.
Assume that $q$ is smooth.

Then $R \coloneqq \bR_{\geq 0} [C]$ is an extremal ray of $X$ and there exists a contraction of $R$ satisfying
\begin{enumerate}
 \item $f$ is smooth;
 \item any fiber $F$ of $f$ is an SRC Fano variety with nef tangent bundle;
 \item $T_Y$ is again nef.
\end{enumerate}
\end{proposition}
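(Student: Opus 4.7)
The plan is to reduce to Propositions \ref{prop:existence_contraction} and \ref{prop:smoothness_contraction} by replacing $q$ with a morphism having irreducible fibers, then descending via a Galois cover. First I would take the Stein factorization $q \colon \sU \xrightarrow{q_1} X' \xrightarrow{\pi} X$. Since $q$ is smooth and proper over the connected base $X$, the map $\pi$ is finite étale (with $X'$ smooth and connected) and $q_1$ is smooth with connected, hence irreducible, fibers. Thus $T_{X'} = \pi^{*}T_X$ is nef, and the diagram $(\sM \xleftarrow{p} \sU \xrightarrow{q_1} X')$ satisfies the hypotheses of Propositions \ref{prop:existence_contraction} and \ref{prop:smoothness_contraction}. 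These yield a smooth contraction $f' \colon X' \to Y'$ of an extremal ray $R' \subset \cNE(X')$, with $T_{Y'}$ nef and every fiber an SRC Fano variety with nef tangent bundle.

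Next I would descend $f'$ to $X$ along a Galois cover. Let $\tilde X \to X$ be the Galois closure of $\pi$ with Galois group $G$. The composition $\tilde X \to X' \xrightarrow{f'} Y'$ has fibers which are finite étale covers of the $f'$-fibers; since each $f'$-fiber is SRC and hence algebraically simply connected by Corollary \ref{cor:SRC}, these étale covers are trivial, so $\tilde X \to Y'$ is smooth, and its Stein factorization $\tilde X \xrightarrow{\tilde f} \tilde Y \to Y'$ has $\tilde f$ smooth with SRC Fano fibers and $\tilde Y \to Y'$ finite étale (in particular $T_{\tilde Y}$ is nef). The ray $\tilde R$ contracted by $\tilde f$ is spanned by any lift of $C$ to $\tilde X$; since $G$ acts over $X$ and merely permutes these lifts, $\tilde R$ is $G$-invariant, so $\tilde f$ is $G$-equivariant. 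Lemma \ref{lem:etale_descent} then produces the desired contraction $f \colon X \to Y$, with $f$-fibers the irreducible images of the $\tilde f$-fibers.

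The main obstacle is to verify the remaining assertions—smoothness of $f$, the SRC-Fano structure of its fibers, and nefness of $T_Y$—which all reduce to showing that $G$ acts freely on $\tilde Y$. This is where Corollary \ref{cor:SRC} is essential: a nontrivial element of $G$ fixing $\tilde y \in \tilde Y$ would produce a nontrivial étale self-cover of the simply connected fiber $\tilde f^{-1}(\tilde y)$, a contradiction. Hence $\tilde Y \to Y$ is étale, so by étale descent $f$ is smooth and $T_Y$ is nef; each $f$-fiber is then isomorphic via the étale map $\tilde X \to X$ to a $\tilde f$-fiber, and is therefore an SRC Fano variety with nef tangent bundle. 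Finally, these fibers have $\rho = 1$ (inherited from the $f'$-fibers by Proposition \ref{prop:existence_contraction}), which forces $\rho(X/Y) = 1$, so $R = \bR_{\geq 0}[C]$ is indeed a $K_X$-negative extremal ray contracted by $f$.
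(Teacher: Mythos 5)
Your first paragraph matches the paper's opening move (Stein factorization of $q$, with $\alpha\colon X'\to X$ finite \'etale and $q_1$ smooth with irreducible fibers, then Propositions \ref{prop:existence_contraction} and \ref{prop:smoothness_contraction} applied over $X'$), but the descent step contains a genuine gap: the claim that the ray $\tilde R$ contracted by $\tilde f$ ``is spanned by any lift of $C$'' and is therefore $G$-invariant. The lifts of $C$ to $\tilde X$ are indeed permuted by $G$, but they need not be numerically proportional, and only those belonging to the family pulled back from $\sM$ are contracted by $\tilde f$; the Galois conjugates of that family are in general different families spanning different rays. Concretely, let $\sigma$ be translation by a $2$-torsion point of an elliptic curve $E'$ generating an \'etale subgroup, and let $X=(\bP^1\times\bP^1\times E')/\tau$, where $\tau$ swaps the two $\bP^1$-factors and acts by $\sigma$ on $E'$. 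This action is free, $T_X$ is nef, and the family $\sM$ of images of ruling curves has evaluation map $q\colon \sU=\bP^1\times\bP^1\times E'\to X$ equal to the \'etale quotient map, so $X'=\tilde X=\bP^1\times\bP^1\times E'$ and $f'=\tilde f$ is the projection killing the first factor. The two lifts $\ell_1,\ell_2$ of a curve $C\in\sM$ are the two rulings, $[\ell_2]\notin\bR_{\geq 0}[\ell_1]=\tilde R$, and $f'$ is visibly not $\tau$-equivariant; everything in your argument after this point collapses. (The same example shows your closing claim that the $f$-fibers have $\rho=1$ is not automatic: the correct contraction here is $X\to E'/\sigma$ with fibers $\bP^1\times\bP^1$; $\rho(F)=1$ only holds when $X$ is RCC, cf.\ Remark \ref{rem:cont:RCC}.)

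The paper's proof is built exactly to avoid this trap. After arranging (by a further base change $X''\to X'$) that the covering $X'\to X$ is Galois with group $G=\{g_1,\dots,g_m\}$, it does not contract the single ray $R'$ but the entire $G$-orbit of rays $R_i=(g_i)_*R'$, via a chain of smooth contractions $X'\to Y_1\to\cdots\to Y_l$ furnished by iterating Propositions \ref{prop:existence_contraction} and \ref{prop:smoothness_contraction}. The fibers of the composite $g\colon X'\to Y_l$ are the $(\sM_1,\dots,\sM_m)$-equivalence classes, and since that \emph{collection} of families is $G$-invariant, $g$ is $G$-equivariant and Lemma \ref{lem:etale_descent} applies; the resulting $f\colon X\to Y$ is then checked against Proposition \ref{prop:smoothness_contraction} directly. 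In the example this replaces your projection $\bP^1\times\bP^1\times E'\to\bP^1\times E'$ by the $\tau$-equivariant map to $E'$, which does descend to the correct contraction. To salvage your architecture you would have to prove that $g_*\tilde R$ is contracted by $\tilde f$ for every $g\in G$, which is false as stated and is precisely what forces the orbit-by-orbit contraction.
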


\begin{proof}
 Let $\sU \xrightarrow{q'} X' \xrightarrow{\alpha} X$ be the Stein factorization of $q$. Since $q$ is smooth, so is $\alpha$ (see for instance \cite[7.8.10 (i)]{Gro63}).
Then we see that $q'$ is also smooth (with irreducible fibers).

First we reduce to the case that the covering $\alpha$ is Galois.
There exists a finite \'etale Galois cover $X'' \to X'$ such that the composite $X'' \to X' \to X$ is also a finite Galois \'etale cover (see e.g.\ \cite[Proposition~5.3.9]{Sza09}).
Set $\sU'' \coloneqq \sU \times_{X'} X''$ and $\sU'' \to \sM'' \to \sM$ be the Stein factorization of $\sU'' \to \sM$.
Then the morphism $\sU'' \to \sM''$ is a smooth $\bP^1$-fibration and we have the following diagram:
\[
\xymatrix{
\sU''  \ar[d]_{p''} \ar[r]^{q''} &X \\
\sM''.
}
\]
Note that this diagram defines the same ray $R$ and the $\sM''$-equivalent relation is the $\sM$-equivalent relation.
Moreover the Stein factorization of $q''$ gives the Galois covering $X'' \to X$.

Therefore we may assume that the map $\alpha$ is a finite \emph{Galois} \'etale cover with Galois group $G \coloneqq \{g_1, \dots , g_m\}$.
By composing $g_i$ with $q'$, we have the following diagrams:
\[
\xymatrix{
\sU_i (=\sU)  \ar[d]_{p_i:=p} \ar[r]^{g_i \circ q'} &X' \\
\sM_i (=\sM).
}
\]
By Proposition~\ref{prop:existence_contraction}, there exist the contractions of extremal rays $R_i \coloneqq (g_i)_*R$.

Let $f_1 \colon X' \to Y_1$ be the contraction of $R_1$.
Then $f_1$ is a smooth morphism by Proposition~\ref{prop:smoothness_contraction}.
Thus the following diagram
\[
\xymatrix{
\sU_i  \ar[d]_{p_i} \ar[r]^{f_1 \circ g_i \circ q'} &Y_1 \\
\sM_i.
}
\]
satisfies the assumption of Propositions~\ref{prop:existence_contraction} and \ref{prop:smoothness_contraction}, provided that $p_i$-fibers are not contracted to a point in $Y_1$.
Thus, by repeating this procedure, we have a sequence of morphism $X' \to Y_1 \to \cdots \to Y_l$ such that each morphism is a smooth contraction whose fibers are $\sM_j$-equivalent classes for some $j$ and that all rays $R_i$ are contracted by $g \colon X' \to Y_l$.

Since  each fiber of $g$ is chain connected by curves in the families $\sM_i$, each fiber of $g$ is an $(\sM_1, \cdots , \sM_m)$-equivalence class.
Since the set of diagrams $\{ (\sM_i \xleftarrow{p_i} \sU_i \xrightarrow{q_i} X)\}_{i=1,\dots,m}$ is $G$-invariant, the $g_i$-image ($g_i \in G$) of an $(\sM_1, \cdots , \sM_m)$-equivalence class is again an $(\sM_1, \cdots , \sM_m)$-equivalence class.
This implies that there exists a $G$-action on $Y$ such that $g$ is $G$-equivariant.
By Lemma~\ref{lem:etale_descent}, there exists a contraction $f \colon X \to Y$ such that the following diagram commutes:
\[
\xymatrix{
X' \ar[d]_{g} \ar[r]^{{\text{\'etale}}}  & X \ar[d]^{f} \\
Y_l \ar[r]_{{\text{finite}}}   & Y. \\
}
\]
Since $g$ is smooth, any fiber of $f$ is irreducible and the image of a $g$-fiber (Lemma~\ref{lem:etale_descent}).
Thus each $f$-fiber is an $\sM$-equivalent class and $f$ is equidimensional with irreducible fibers.
Hence $f$ satisfies the assumption of Proposition~\ref{prop:smoothness_contraction} and the assertions follow.
\end{proof}

\begin{remark}\label{rem:cont:RCC}
Assume $X$ is RCC, then $X$ is simply connected.
Thus the map $\sU \to X$ has connected fibers.
In this case, by Proposition~\ref{prop:existence_contraction}, we have $\rho(F) = 1$ for any fiber $F$ of the extremal contraction
\end{remark}

\begin{proof}[Proof of Theorem~\ref{thm:contraction}]
Take a rational curve $C$ on $X$ such that $R=\bR_{\geq 0}[C]$ and the anticanonical degree of $C$ is minimal among rational curves in the ray $R$. Let $\sM$ be the unsplit family of rational curves containing $[C]$:
\[
\xymatrix{
\sU  \ar[d]_{p} \ar[r]^{q} &X \\
\sM.
}
\]

Then $q$ is smooth and the assertion follows from Proposition~\ref{prop:unsplit=>extremal}
\end{proof}

\section{Rational chain connectedness and positivity of $-K_X$}\label{sect:Fano}
In this section, we will prove Theorem~\ref{thm:RCC=>Fano}.
The proof proceeds as \cite[Proof of Theorem 4.16]{Wat20}.
Here we divide the proof into several steps.

Throughout this section,
\begin{itemize}
 \item $X$ denotes a smooth projective variety with nef tangent bundle and $f \colon X \to Y$ is a smooth contraction with RCC fibers.
\end{itemize}
Note that by Theorem~\ref{thm:RCC=>SRC} all fibers are separably rationally connected.

\begin{lemma}[Correspondence of extremal rays]\label{lem:correspondence_ext_ray}
Suppose that $R_Y$ is an extremal ray of $Y$.
Then there exists an extremal ray $R_X$ of $X$ such that $f_*R _X =R_Y$.

On the other hand, if $R_X$ is an extremal ray of $X$, then $f_* R_X$ is an extremal ray unless $f_* R_X = 0$.
\end{lemma}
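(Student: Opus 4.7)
The strategy in both directions is to produce a suitable unsplit family of free rational curves on the relevant variety and invoke Proposition~\ref{prop:unsplit=>extremal}.

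For the forward direction, $T_Y$ is nef by Theorem~\ref{thm:contraction}(3), so Mori's cone theorem on $Y$ furnishes a rational curve $C_Y$ of minimal $-K_Y$-degree spanning $R_Y$, contained in an unsplit family $\sM_Y$. Because $f$ is smooth with SRC Fano fibers, for every $C_Y' \in \sM_Y$ the fibration $X \times_Y C_Y' \to C_Y' \simeq \bP^1$ admits a section by \cite{GHS03,DJS03}, giving a lift $\tilde C \subset X$ mapping isomorphically onto $C_Y'$. I pick such a lift $\tilde C_X$ of minimal $-K_X$-degree and take $\sM_X$ to be its deformation family. Any non-trivial degeneration $\tilde C_X \rightsquigarrow \sum \tilde C_i$ would push forward under $f$ to a degeneration of $C_Y$ in $\sM_Y$, impossible by unsplitness, so all but one component lies in a fiber of $f$; since these fibers are Fano, they contribute strictly positive $-K_X$-degree, and the remaining ``horizontal'' component is an isomorphic lift of $C_Y$ of strictly smaller $-K_X$-degree, contradicting minimality. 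Hence $\sM_X$ is unsplit, nefness of $T_X$ makes every rational curve on $X$ free (so the evaluation map is smooth), and Proposition~\ref{prop:unsplit=>extremal} yields $R_X \coloneqq \bR_{\geq 0}[\tilde C_X]$ as an extremal ray with $f_*R_X = R_Y$ by construction.

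For the reverse direction, given an extremal ray $R_X$ with $v \coloneqq f_*R_X \neq 0$, let $\sM_X$ be the unsplit family of $R_X$-curves of minimal $-K_X$-degree used in the proof of Theorem~\ref{thm:contraction}. Since $v \neq 0$, $f$ contracts no curve in $\sM_X$, and pushing forward produces rational curves on $Y$ of class on the ray $\bR_{\geq 0}v$. I then take a rational curve $C_Y$ on $Y$ with $[C_Y] \in \bR_{\geq 0}v$ of minimal $-K_Y$-degree and let $\sN_Y$ be its deformation family. To verify that $\sN_Y$ is unsplit, I argue that any putative degeneration $C_Y \rightsquigarrow C_1 + C_2 + \cdots$ can be flatly lifted through $f$ by deforming an isomorphic lift of $C_Y$ over the family of degenerating curves; comparison of numerical classes via $f_*$, together with the fact that images of irreducible curves are irreducible, forces the $[C_i]$ to be proportional in $N_1(Y)$ and hence to lie on the ray $\bR_{\geq 0}v$, after which minimality of $-K_Y$-degree rules out non-trivial degenerations. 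Nefness of $T_Y$ makes the evaluation map of $\sN_Y$ smooth, and Proposition~\ref{prop:unsplit=>extremal} then yields $\bR_{\geq 0}v$ as an extremal ray of $\cNE(Y)$.

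The main technical obstacle I anticipate is the unsplitness verification in the reverse direction. Whereas in the forward direction a degeneration upstairs pushes forward cleanly into a preexisting unsplit family downstairs, here one must flatly lift a hypothetical downstairs degeneration through $f$ and control its limiting cycle in $X$. Handling the case where the upstairs lift becomes reducible, keeping track of the degree of $f$ restricted to each component, and matching numerical classes after pushforward are substantially more intricate than the direct argument available in the forward direction.
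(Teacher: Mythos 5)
Your forward direction is essentially the paper's argument: lift a minimal rational curve of $R_Y$ through the SRC fibers via \cite{GHS03,DJS03}, take a lift of minimal $-K_X$-degree, check that its family is unsplit, and apply Proposition~\ref{prop:unsplit=>extremal}. One misattribution there: what controls the pushed-forward decomposition $[C_Y]=\sum f_*[\tilde C_i]$ is the \emph{extremality of $R_Y$} (each $f_*[\tilde C_i]$ lies in $\NE(Y)$ and their sum lies on the extremal ray, hence each lies in $R_Y$), combined with minimality of $-K_Y\cdot C_Y$ --- it is not literally ``a degeneration inside $\sM_Y$''. The conclusion you draw (exactly one horizontal component, of strictly smaller $-K_X$-degree) is nevertheless correct, provided your minimality is taken over lifts of \emph{all} minimal curves of $R_Y$, not only of members of one family $\sM_Y$.

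The reverse direction has a genuine gap, and it is exactly the one you flag. You take a minimal curve $C_Y$ with class on $\bR_{\geq 0}v$, form its family $\sN_Y$, and propose to prove unsplitness by lifting a hypothetical degeneration $C_Y\rightsquigarrow \sum C_i$ through $f$ and ``comparing numerical classes via $f_*$'' to force the $[C_i]$ onto the ray. But proportionality of the $[C_i]$ to $v$ is precisely the extremality of $\bR_{\geq 0}v$ being proved; nothing constrains a decomposition of $[C_Y]$ in $\NE(Y)$ to stay on a ray that is not yet known to be extremal, so the argument is circular. The lifting step is also not available as stated: a section over the general member of a degenerating one-parameter family need not extend to a flat family of lifts, and even a single lift $\tilde C$ of $C_Y$ only has class in $f_*^{-1}(\bR_{\geq 0}v)$, not in $R_X$, so extremality of $R_X$ does not control its degenerations. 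The paper sidesteps all of this: it takes the unsplit family $\sM$ of minimal curves in $R_X$ with universal diagram $\sM\xleftarrow{p}\sU\xrightarrow{q}X$ and simply replaces $q$ by $f\circ q$. Since $f$ and $q$ are smooth, $f\circ q$ is smooth; since $f_*R_X\neq 0$, no $p$-fiber is contracted; and $\sU$, $\sM$ are projective. Thus $(\sM\xleftarrow{p}\sU\xrightarrow{f\circ q}Y)$ is a diagram of the form \eqref{eq:diagram} on $Y$ and Proposition~\ref{prop:unsplit=>extremal} applies verbatim to give extremality of $f_*R_X$. The point you missed is that Proposition~\ref{prop:unsplit=>extremal} is formulated for arbitrary diagrams of that shape, not only for irreducible components of $\rat^n(Y)$, so no unsplit family on $Y$ ever needs to be constructed.
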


\begin{proof}
Let $C_Y$ be a rational curve such that the class belongs to $R_Y$ and $-K_Y \cdot C_Y$ is minimum.
Since any fiber of $f$ is RCC, it is SRC by Theorem~\ref{thm:RCC=>SRC}.
Thus, by \cite{GHS03,DJS03}, there exists a rational curve $C_X$ on $X$ such that $f|_{C_X} \colon C_X \to C_Y$ is birational.
Take such a rational curve $C_X$ with minimum anti-canonical degree $-K_X \cdot C_X$ (note that, since $C_X$ is free, the degree is at least $2$).

Then the family of rational curves of $C_X$ is unsplit.
Thus, by Proposition~\ref{prop:unsplit=>extremal}, $\bR_{\geq 0} [C_X]$ defines an extremal ray of $X$.

Conversely, assume that $R_X$ is an extremal ray of $X$.
Let $\sM$ be a family of minimal rational curves in $R_X$ and let
\[
\xymatrix{
\sU \ar[d]_{p} \ar[r]^{q} &X \\
\sM
}
\]
be the diagram of this family of rational curves.
Then $f \circ q$ gives a diagram satisfying the assumption of Proposition~\ref{prop:unsplit=>extremal}, and hence $f_*R_X$ is an extremal ray.
\end{proof}

\begin{proposition}[Relative Picard numbers]\label{prop:Picard_number}
Let $Y \to Z$ be another contraction.
Then $\rho(X/Y) =\rho (X/Z) - \rho (Y/Z)$ and we have the following exact sequence:
\[
0 \to N_1(X/Y) \to N_1(X/Z) \to N_1(Y/Z) \to 0.
\]
\end{proposition}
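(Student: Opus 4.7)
The plan is to first establish the dual exact sequence
\[
0 \to N^1(Y/Z) \xrightarrow{f^*} N^1(X/Z) \to N^1(X/Y) \to 0,
\]
and then dualize it via the perfect pairing between $N^1$ and $N_1$ to obtain the claimed sequence; the Picard number identity $\rho(X/Y) = \rho(X/Z) - \rho(Y/Z)$ will then follow by a dimension count.

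The single nonformal ingredient, which I expect to be the heart of the argument, is the claim that every $f$-numerically trivial divisor class on $X$ lies in $f^* N^1(Y)$. To prove it, I would take $L \in \Pic(X)$ with $L \cdot C = 0$ for every curve $C$ contained in an $f$-fiber. Each fiber $F$ of $f$ is a smooth RCC variety by hypothesis, hence SRC by Theorem~\ref{thm:RCC=>SRC}, so Corollary~\ref{cor:SRC} yields $H^1(F, \cO_F) = 0$ and forces every numerically flat vector bundle on $F$ to be trivial. Consequently $L|_F \simeq \cO_F$ for every fiber $F$, and since $f$ is flat and proper with the function $F \mapsto h^0(F, L|_F) = 1$ constant, Grauert's theorem provides a line bundle $M := f_* L$ on $Y$ for which the counit $f^* M \to L$ is a fiberwise, hence global, isomorphism.

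Granting this, the relative $N^1$ sequence is routine. Well-definedness of $f^* \colon N^1(Y/Z) \to N^1(X/Z)$ follows from the projection formula. For injectivity I would lift a test curve: given a curve $C$ in a $g$-fiber of $Y$, the base change $X \times_Y C \to C$ is a smooth fibration with SRC fibers over the normalization of $C$, so by de Jong--Starr it admits a section, producing a curve $\tilde C \subset X$ in a $(g \circ f)$-fiber with $f_*[\tilde C] = d\,[C]$ for some $d \geq 1$; this forces any $E \in N^1(Y)$ that is $g$-nontrivial to have $f^* E$ also $(g\circ f)$-nontrivial. The surjection $N^1(X/Z) \twoheadrightarrow N^1(X/Y)$ is immediate since $(g \circ f)$-trivial divisors are in particular $f$-trivial, and exactness at the middle is precisely the central claim. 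Finally, dualizing identifies the map $N_1(X/Z) \to N_1(Y/Z)$ with the pushforward $f_*$ and the map $N_1(X/Y) \hookrightarrow N_1(X/Z)$ with the natural inclusion of fiber classes, completing the proof.
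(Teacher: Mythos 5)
Your proposal is correct and follows essentially the same route as the paper: the paper's (very terse) proof likewise reduces everything to exactness of the dual sequence $0 \to N^1(Y/Z) \to N^1(X/Z) \to N^1(X/Y) \to 0$, with the key input being that numerically trivial line bundles on the SRC fibers of $f$ are trivial (Corollary~\ref{cor:SRC}). You have simply supplied the details (Grauert plus the counit isomorphism for exactness in the middle, curve-lifting for injectivity) that the paper leaves implicit.
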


\begin{proof}
Note that any $f$-fiber $F$ is SRC and, in particular, any numerically trivial line bundle on $F$ is trivial (Corollary \ref{cor:SRC:h1}).

Thus the sequence
\[
0 \to N^1(Y/Z) \to N^1(X/Z) \to N^1(X/Y) \to 0
\]
is exact and the assertions follow.
\end{proof}

\begin{theorem}[Relative Kleiman-Mori cone]\label{thm:simplicial}
Fix an integer $m$, then the following are equivalent:
\begin{enumerate}
 \item \label{thm:simplicial1} $\rho (X/Y) =m$.
 \item \label{thm:simplicial2} There exists a sequence of smooth contractions of $K$-negative extremal rays 
 \[
 X=X_0 \to X_1 \to X_2 \to \cdots \to X_{m-1} \to X_{m} = Y.
 \]
\item \label{thm:simplicial3} $\NE (X/Y)$ is a closed simplicial cone of dimension $m$, and it is generated by $K_X$-negative extremal rays.
\end{enumerate}
In particular, any fiber $F$ is a smooth Fano variety.
\end{theorem}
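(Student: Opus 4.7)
The plan is to induct on $m$, with the base case $m=0$ being immediate: if $\rho(X/Y)=0$ and some fiber were positive-dimensional, then being RCC with nef tangent bundle it would contain a free rational curve producing a nonzero class in $N_1(X/Y)$; hence $f$ is an isomorphism and all three conditions hold vacuously. Observe also that (2) $\Rightarrow$ (1) follows by summing relative Picard numbers along the tower via Proposition~\ref{prop:Picard_number}, and (3) $\Rightarrow$ (1) is immediate from the dimension.

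For (1) $\Rightarrow$ (2), assume $\rho(X/Y)=m\geq 1$ and pick a rational curve $C$ of minimal $-K_X$-degree among those contained in some fiber of $f$; such a curve exists since fibers are RCC and $T_X$ is nef. The minimality makes the associated family unsplit---a degeneration would produce a component of strictly smaller anti-canonical degree---so Proposition~\ref{prop:unsplit=>extremal} identifies $R_0:=\bR_{\geq 0}[C]$ as a $K_X$-negative extremal ray contained in $\NE(X/Y)$. Theorem~\ref{thm:contraction} contracts $R_0$ to a smooth morphism $f_1\colon X\to X_1$, through which $f$ factors as $\pi_1\circ f_1$. The morphism $\pi_1$ is again a smooth contraction with $T_{X_1}$ nef, whose fibers are the $f_1$-images of RCC fibers of $f$ and hence RCC, and $\rho(X_1/Y)=m-1$ by Proposition~\ref{prop:Picard_number}. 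Applying the induction hypothesis to $\pi_1$ extends the tower to length $m$.

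The main task is (2) $\Rightarrow$ (3). Let $R_0$ be the ray contracted by $f_1$, and let $\bar R_1,\dots,\bar R_{m-1}$ be the $K_{X_1}$-negative extremal rays generating $\NE(X_1/Y)$ furnished by the inductive hypothesis. Applying Lemma~\ref{lem:correspondence_ext_ray} to $f_1$ (whose fibers are SRC Fano by Theorem~\ref{thm:contraction}, hence RCC), each $\bar R_i$ lifts to a $K_X$-negative extremal ray $R_i$ of $X$ with $(f_1)_* R_i=\bar R_i$; since $(\pi_1)_*\bar R_i=0$, this places $R_i$ in $\NE(X/Y)$. The exact sequence of Proposition~\ref{prop:Picard_number} makes $R_0,R_1,\dots,R_{m-1}$ linearly independent, so they span a full-dimensional simplicial subcone of $\NE(X/Y)$. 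To prove equality, the plan is to simultaneously establish by induction that each fiber $F$ of $f$ is a smooth Fano variety: $T_F$ is nef via the standard sequence $0\to T_F\to T_X|_F\to N_{F/X}\to 0$ with trivial normal bundle $N_{F/X}\simeq\cO_F^{\dim Y}$ and Proposition~\ref{prop:nef_bundle}~\ref{prop:nef_bundle4}; $F$ is RCC; and $F$ inherits a tower of smooth Fano contractions from the $X_i$. Following \cite[Proof of Theorem~4.16]{Wat20}, one then uses the nef-ness of $T_X$ along this tower to conclude that $-K_X$ is $f$-ample. Mori's cone theorem on $X$ confines $\NE(X/Y)$ to the $K_X$-negative half-space, where it is polyhedral and equal to the simplicial cone $\sum_{i=0}^{m-1}\bR_{\geq 0}R_i$. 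The ``in particular'' statement then follows from Kleiman's criterion applied fiberwise, since $-K_X\cdot R_i>0$ for each $i$.

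The main obstacle is the inductive establishment of Fano-ness of fibers inside the step (2) $\Rightarrow$ (3): since this assertion is essentially a special case of the theorem being proved (the case $Y=\mathrm{Spec}\,k$ applied to $F$), a non-circular argument is needed. The resolution is to propagate the positivity of $-K_X$ carefully through the tower, using the nef-ness of $T_X$ to pin down the sign of $-K_{X_i/X_{i+1}}$ against curves that are not vertical for $g_i$. This is the technical heart of the proof and is where the hypothesis that $T_X$ is nef is used most substantively.
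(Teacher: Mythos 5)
Your reductions are fine: the base case, the implications (2)$\Rightarrow$(1) and (3)$\Rightarrow$(1) via Proposition~\ref{prop:Picard_number}, and (1)$\Rightarrow$(2) by contracting one $K_X$-negative extremal ray of $\cNE(X/Y)$ and inducting all match the paper. The problem is in (2)$\Rightarrow$(3). After you produce $m$ linearly independent $K_X$-negative extremal rays $R_0,\dots,R_{m-1}$ inside $\NE(X/Y)$, you claim that establishing $f$-ampleness of $-K_X$ and invoking the cone theorem forces $\NE(X/Y)$ to \emph{equal} their simplicial span. That inference is false: a $K$-negative polyhedral cone of dimension $m$ can have many more than $m$ extremal rays (del Pezzo surfaces already do this for the absolute cone), so Fano-ness of the fibers does not yield simpliciality. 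Moreover, the one genuinely hard point you flag yourself --- that fibers of $f$ are Fano, which is a special case of the statement being proved --- is ``resolved'' only by the sentence about propagating the sign of $-K_{X_i/X_{i+1}}$ through the tower; that is a placeholder, not an argument. So the step that actually establishes (3) is missing.

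The paper closes this gap by a different mechanism, in which Fano-ness is a \emph{consequence} of the cone description rather than an input. One applies the inductive hypothesis to the composite $g \colon X \to X_{m-1}$ of the first $m-1$ contractions, so $\cNE(X/X_{m-1})$ is simplicial with rays $R_1,\dots,R_{m-1}$ spanned by curves $C_1,\dots,C_{m-1}$; then one lifts the ray of $X_{m-1}\to Y$ through $g$ via Lemma~\ref{lem:correspondence_ext_ray} to an extremal ray $R$ of $X$ and contracts it to $h\colon X \to Z$. The key observations are that $N_1(X/X_{m-1}) \to N_1(Z/Y)$ is an isomorphism identifying $\cNE(X/X_{m-1})$ with $\cNE(Z/Y)$ (both are simplicial and extremal rays map to extremal rays), and that for any irreducible curve $C$ contracted by $f$ one can write $h_*C=\sum_i a_i h_*C_i$ with $a_i\geq 0$, whence $C-\sum_i a_iC_i\in\Ker(h_*)=N_1(X/Z)=\bR R$; pairing against $g^*H$ for $H$ ample on $X_{m-1}$ (which kills the $C_i$ and is positive on $R$) shows the remaining coefficient is nonnegative. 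This exhibits every vertical curve class as a nonnegative combination of $R,R_1,\dots,R_{m-1}$ and proves simpliciality directly; the Fano-ness of fibers then follows from the relative Kleiman criterion. You would need to supply an argument of this kind (or some other device that bounds the number of extremal rays by $m$) for your proof to go through.
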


\begin{proof}
The proof proceeds by induction on $m$.
If $m =1$, then the equivalence of the three conditions follows from Theorem~\ref{thm:contraction}.

Assume $m \geq 2$.
Note that (\ref{thm:simplicial3} $\implies$ \ref{thm:simplicial1}) is trivial.
Also note that Proposition~\ref{prop:Picard_number} imply (\ref{thm:simplicial2} $\implies$ \ref{thm:simplicial1}).

Now, we prove (\ref{thm:simplicial1} $\implies$ \ref{thm:simplicial2}).
Since $K_X$ is not $f$-nef, there exists a $K_X$-negative extremal ray $R \subset \cNE (X/Y)$.
By Theorem~\ref{thm:contraction}, there exists a smooth contraction $f_1 \colon X \to X_1$ of $R$, and $f$ factors through $X_1$:
\[
X \xrightarrow{f_1} X_1 \to Y.
\]
Since $f_1$ is smooth and any fiber of $f$ is RCC,
the morphism $X_1 \to Y$ is also smooth and fibers are RCC.
Since $\rho (X_1/Y)<m$, the contraction $X_1 \to Y$ satisfies the three conditions.
Hence \ref{thm:simplicial2} holds for $f \colon X \to Y$.

Finally we prove \ref{thm:simplicial2} $\implies$  \ref{thm:simplicial3}.
Since $\rho(X/X_{m-1}) <m$ by Proposition~\ref{prop:Picard_number}, the contraction $g \colon X \to X_{m-1}$ satisfies the three conditions.
In particular, $\NE(X/{X_{m-1}})$ is a simplicial $K_X$-negative face of dimension $m-1$.
Denote by $R_1 =\bR_{\geq 0} [C_1]$, \dots, $R_{m-1}=\bR_{\geq 0} [C_{m-1}]$ the extremal rays of  $\NE(X/{X_{m-1}})$, which are spanned by curves $C_i$.
Note also that $X_{m-1} \to Y$ is a contraction of an extremal ray $R_{X_{m-1}}$.

Then, by Lemma~\ref{lem:correspondence_ext_ray}, there exists an extremal ray $R$ of $X$ such that $g_* R = R_{X_{m-1}}$.
Let $h \colon X \to Z$ be the contraction of $R$, then we have the following commutative diagram:
\[
\xymatrix{
X \ar[r]^{g} \ar[d]_{h} & X_{m-1}\ar[d]^{u}\\
Z\ar[r]_{v} & Y.
}
\]

Note that, by the inductive hypothesis, each morphism $g$, $h$, $u$, $v$ satisfies the three conditions.
Since $N_1(X/X_{m-1}) \to N_1(Z/Y)$ is a surjective homomorphism between the same dimensional vector spaces, it is an isomorphism.

By Lemma~\ref{lem:correspondence_ext_ray}, the map $N_1(X/X_{m-1}) \to N_1(Z/Y)$ sends each extremal ray to an extremal ray.
Thus $\cNE(X/X_{m-1}) \simeq \cNE(Z/Y)$ since these two cones are simplicial.
In particular $\cNE(Z/Y)$ is spanned by $h_*(C_i)$ ($i=1$, \dots $m-1$).

Let $C$ be an irreducible curve such that $f(C)$ is a point.
Then $h_*(C) \in \NE(Z/Y)$.
Thus we may write
\[
h_*(C) = \sum_{i=1}^{m-1} a_i h_*(C_i)
\]
with non-negative real numbers $a_i$.
Thus $C - \sum_{i=1}^{m-1} a_i C_i \in \Ker(h_*) = N_1(X/Z)$.
Since $g^*H \cdot (C - \sum_{i=1}^{m-1} a_i C_i) \geq 0$ for any ample divisor $H$ on $X_{m-1}$, we see that
\[
C - \sum_{i=1}^{m-1} a_i C_i \in  R.
\]
This proves $\cNE(X/Y)$ is spanned by $R$, $R_1$, \dots, $R_{m-1}$, and hence it is simplicial (since $\rho(X/Y) =m$).
\end{proof}

\begin{remark}\label{rem:cone}
If $X$ is RCC, or equivalently, a Fano variety, then each $X_i$ is also a Fano variety.
Thus, by Remark~\ref{rem:cont:RCC}, the fibers of $X_i \to X_{i+1}$ have Picard number one.
Then, by arguing as above, we can show that each fiber $F$ of the contraction $X \to  Y$ is a Fano variety with $\rho(F) = m$.
Moreover $N_1(F) $ and $\NE(F)$ are identified with $N_1(X/Y)$ and $\NE(X/Y)$ respectively via the inclusion.
\end{remark}

\section{Decomposition of varieties with nef tangent bundles}\label{sect:decomposition}
Here we will prove Theorem~\ref{thm:decomposition}.

\begin{proof}
If $K_X$ is nef, then $T_X$ is numerically trivial and $\varphi =\id \colon X \to X$ gives the desired contraction.

Assume that $K_X$ is not nef.
Then, by Mori's cone theorem, we can find an extremal ray $R \subset \cNE(X)$ of $X$.
Then, by Theorem~\ref{thm:contraction}, there exists a smooth contraction $f \colon X \to Y$ of $R$ and the tangent bundle of $Y$ is again a nef.
Then, by induction on the dimension, $Y$ admits a smooth contraction $\varphi'  \colon Y \to M$ satisfying the conditions in Theorem~\ref{thm:decomposition}.
Set $\varphi \coloneqq \varphi' \circ f$.

Let $F$ be a fiber of $\varphi$.
Then $T_F$ is nef.
Moreover, since any fiber of $f$ is separably rationally connected and any fiber of $\varphi'$ is rationally chain connected, it follows that $F$ is rationally chain connected \cite{GHS03,DJS03}.
Hence $F$ is a smooth Fano variety by Theorem~\ref{thm:RCC=>Fano}.

Since $T_M$ is numerically flat, there are no rational curves on $M$.
Thus $\varphi\colon X \to M$ is the MRCC fibration of $X$.
\end{proof}

\begin{corollary}[Contraction of extremal faces]\label{cor:face}
 Let $X$ be a smooth projective variety with nef tangent bundle and $\varphi \colon X \to M$ be the decomposition morphism as in Theorem~\ref{thm:decomposition}.
Then $\NE(X/M)$ is simplicial.
Any set of $K_X$-negative extremal rays spans an extremal face.

Moreover, for any $K_X$-negative extremal face $F$, there exists the contraction of $F$ and it is a smooth morphism satisfies the three equivalent conditions in Theorem~\ref{thm:simplicial}.
\end{corollary}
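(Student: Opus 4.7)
The plan is to derive (1) and (2) directly from Theorem~\ref{thm:simplicial} applied to the MRCC fibration $\varphi$, and to build the contraction in (3) by iteratively contracting the extremal rays of $F$ one at a time.

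First I would apply Theorem~\ref{thm:simplicial} to $\varphi \colon X \to M$. By Theorem~\ref{thm:decomposition}(2), every fiber of $\varphi$ is an SRC Fano variety, hence RCC, so the hypotheses are satisfied. We conclude that $\NE(X/M)$ is a closed simplicial cone of dimension $m := \rho(X/M)$ spanned by $K_X$-negative extremal rays $R_1, \dots, R_m$, which is (1). For~(2), note that because $T_M$ is numerically flat, $M$ contains no rational curves: for any nonconstant $f \colon \bP^1 \to M$, after factoring off Frobenius we may assume $f$ is separable, and then the nonzero differential $\cO(2) \to f^* T_M$ into a trivial bundle is impossible. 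Consequently every rational curve on $X$, and hence every $K_X$-negative extremal ray of $X$, is contracted by $\varphi$, so the $K_X$-negative extremal rays of $X$ are precisely $R_1, \dots, R_m$. Since $\NE(X/M) = \cNE(X) \cap \Ker(\varphi_*)$ is an extremal face of $\cNE(X)$, any subset of $\{R_1,\dots,R_m\}$ spans a face of the simplicial cone $\NE(X/M)$, which is again a face of $\cNE(X)$; this proves~(2).

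For (3), let $F$ be a $K_X$-negative extremal face of $\cNE(X)$. By the polyhedrality of the $K_X$-negative region \cite{Mor82} its extremal rays are $K_X$-negative extremal rays of $X$, so by Step~2 we have $F = \sum_{i=1}^{k} \bR_{\geq 0} R_i$ after relabeling. I would construct the contraction by induction on $k$: contract $R_1$ via Theorem~\ref{thm:contraction} to obtain a smooth morphism $f_1 \colon X \to X^{(1)}$ with $T_{X^{(1)}}$ nef; the factorization $X \to X^{(1)} \to M$ is a tower of smooth contractions with SRC Fano fibers, and Lemma~\ref{lem:correspondence_ext_ray} together with Proposition~\ref{prop:Picard_number} show that the images $(f_1)_* R_2, \dots, (f_1)_* R_k$ are $K_{X^{(1)}}$-negative extremal rays of $X^{(1)}$ lying in $\NE(X^{(1)}/M)$. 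Iterating produces a tower $X \to X^{(1)} \to \cdots \to X^{(k)} =: X_F$ of smooth single-ray contractions whose composition $f \colon X \to X_F$ is smooth, has fibers that are towers of SRC Fano fibrations and hence RCC by \cite{GHS03,DJS03}, and satisfies $\rho(X/X_F) = k$ by repeated application of Proposition~\ref{prop:Picard_number}. Thus Theorem~\ref{thm:simplicial} applies to $f$ and yields the three equivalent conditions; moreover $\NE(X/X_F)$ is a simplicial $k$-dimensional face containing the $k$ linearly independent rays $R_1,\dots,R_k$, so $\NE(X/X_F) = F$ and $f$ is the contraction of $F$.

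The only genuine obstacle is the inductive bookkeeping in Step~3: after contracting a single ray, one must ensure that the residual rays of $F$ remain extremal on the next variety so that Theorem~\ref{thm:contraction} can be invoked again. This is precisely what Lemma~\ref{lem:correspondence_ext_ray} provides, so the induction goes through without difficulty.
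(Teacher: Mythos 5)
Your proof is correct and is essentially the derivation the paper intends (the corollary is stated there without proof): apply Theorem~\ref{thm:simplicial} to $\varphi$, observe that $M$ carries no rational curves so every $K_X$-negative extremal ray of $X$ is one of the spanning rays of the simplicial cone $\NE(X/M)$, and realize the contraction of a face as a composite of elementary smooth contractions via Theorem~\ref{thm:contraction}, Lemma~\ref{lem:correspondence_ext_ray} and Proposition~\ref{prop:Picard_number}. The one assertion you leave unjustified is the identification $\NE(X/M)=\cNE(X)\cap\Ker(\varphi_*)$, which you need so that faces of the simplicial cone $\NE(X/M)$ are genuine extremal faces of $\cNE(X)$; it holds because $-K_X$ is $\varphi$-ample, so $-K_X+N\varphi^*H$ is ample for $N\gg 0$ and an ample $H$ on $M$, whence $\cNE(X)_{K_X\ge 0}\cap(\varphi^*H)^{\perp}=0$ and the cone theorem gives $\cNE(X)\cap(\varphi^*H)^{\perp}=\sum_i R_i=\NE(X/M)$.
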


\section{$F$-liftable varieties with nef tangent bundles}
Here we apply our results to prove Theorem~\ref{thm:F-liftable}

\subsection{Preliminaries on $F$-liftability}

\begin{definition}[$F$-liftable varieties]\label{def:f-liftable}
Let $X$ be a projective variety over $k$. 
\begin{enumerate}
\item A \emph{lifting of $X$ (modulo $p^2$)} is a flat scheme $\widetilde{X}$ over the ring $W_2(k)$ of Witt vectors of length two with $\widetilde{X}\times_{{\Spec} W_2(k)} \Spec (k) \cong X$.
\item For such a lifting $\widetilde{X}$ of $X$, \emph{a lifting of Frobenius} on $X$ to $\widetilde{X}$ is a morphism $\widetilde{F_X} \colon \widetilde{X} \to \widetilde{X}$ such that the restriction $\widetilde{F_X}|_X$ coincides with the Frobenius morphism $F_X$;
then the pair $(\widetilde{X}, \widetilde{F_X})$ is called a \emph{Frobenius lifting of $X$}.
If there exists such a pair, $X$ is said to be \emph{F-liftable}.
\end{enumerate}
\end{definition}

\begin{proposition}[$F$-liftable varieties]\label{prop:f-liftable}
For a smooth $F$-liftable projective variety $X$, the following hold:
\begin{enumerate}
\item \label{prop:f-liftable1} For any finite \'etale cover $Y \to X$, $Y$ is also F-liftable.
\item \label{prop:f-liftable2} Let $f \colon  X \to Y$ be a contraction and assume $R^1f_{\ast}\cO_X=0$.
Then $Y$ and any fiber $F$ of $f$ are also F-liftable.
\end{enumerate}
\end{proposition}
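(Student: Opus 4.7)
The plan for part \ref{prop:f-liftable1} is to exploit the unique infinitesimal lifting of \'etale morphisms. Given a finite \'etale cover $g\colon Y\to X$, there is a unique flat $W_2(k)$-scheme $\widetilde{Y}$ and an \'etale morphism $\widetilde{g}\colon\widetilde{Y}\to\widetilde{X}$ reducing to $g$. To manufacture $\widetilde{F}_Y$, I would form $\widetilde{Y}':=\widetilde{Y}\times_{\widetilde{X},\widetilde{F}_X}\widetilde{X}$, which is \'etale over $\widetilde{X}$ with reduction $Y\times_{X,F_X}X = Y^{(1)}$. Since $g$ is \'etale, the relative Frobenius $Y\to Y^{(1)}$ is an isomorphism, so $\widetilde{Y}'$ is another \'etale lift of $Y$, canonically identified with $\widetilde{Y}$ by uniqueness; the composite $\widetilde{Y}\xrightarrow{\cong}\widetilde{Y}'\to\widetilde{Y}$ is then the required $\widetilde{F}_Y$.

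For part \ref{prop:f-liftable2}, the plan is to construct $\widetilde{Y}$ as the ringed space $(|Y|,\widetilde{f}_*\cO_{\widetilde{X}})$, where $\widetilde{f}$ denotes the map on underlying topological spaces. Pushing the flatness sequence $0\to\cO_X\xrightarrow{\cdot p}\cO_{\widetilde{X}}\to\cO_X\to 0$ forward by $\widetilde{f}$ and using $f_*\cO_X=\cO_Y$ together with $R^1f_*\cO_X=0$ yields $0\to\cO_Y\to\widetilde{f}_*\cO_{\widetilde{X}}\to\cO_Y\to 0$, so $\widetilde{f}_*\cO_{\widetilde{X}}$ is $W_2(k)$-flat with reduction $\cO_Y$. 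The endomorphism $\widetilde{F}_X^{\#}$ of $\cO_{\widetilde{X}}$ is compatible with $\widetilde{f}$ on underlying spaces and hence induces an endomorphism of $\widetilde{f}_*\cO_{\widetilde{X}}$ lifting $F_Y^{\#}$, producing $\widetilde{F}_Y$ once $\widetilde{Y}$ is known to be a scheme. For the fiber $F=f^{-1}(y)$ over a closed point I would first produce a $\widetilde{F}_Y$-fixed lift $\widetilde{y}\in\widetilde{Y}(W_2(k))$: lifts of $y$ form a torsor under $T_{Y,y}$ and, because $dF_Y=0$, the action of $\widetilde{F}_Y$ on this torsor is a translation, so a unique fixed point exists. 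Then $\widetilde{F}:=\widetilde{X}\times_{\widetilde{Y}}\widetilde{y}$ is a flat $W_2(k)$-lift of $F$, and $\widetilde{F}_X$ restricts to a Frobenius lift on $\widetilde{F}$ since it maps the fiber over $\widetilde{y}$ to the fiber over $\widetilde{F}_Y(\widetilde{y})=\widetilde{y}$.

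The hardest step will be verifying that the ringed space $(|Y|,\widetilde{f}_*\cO_{\widetilde{X}})$ above actually defines a scheme; flatness of the structure sheaf over $W_2(k)$ is easy, but one must exhibit affine charts. My plan is to take an $f$-ample line bundle $\sL$ on $X$, lift it to $\widetilde{\sL}$ on $\widetilde{X}$, and realize $\widetilde{Y}$ as the relative Proj $\Proj_{|Y|}\bigoplus_{n\ge 0}\widetilde{f}_*\widetilde{\sL}^{\otimes n}$; compatibility with $Y=\Proj_{|Y|}\bigoplus_n f_*\sL^{\otimes n}$ then follows from cohomology and base change, using $R^1f_*\cO_X=0$ to ensure that the graded pieces $\widetilde{f}_*\widetilde{\sL}^{\otimes n}$ are the expected flat $W_2(k)$-lifts of $f_*\sL^{\otimes n}$.
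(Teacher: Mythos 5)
The paper itself gives no argument here: both parts are quoted from \cite{AWZ17} (Lemma~3.3.5 for part (1), Theorem~3.3.6(b) and Corollary~3.5.4 for part (2)). Your part (1) is correct and is precisely the standard proof behind that citation: unique infinitesimal lifting of \'etale morphisms, plus the fact that the relative Frobenius of an \'etale cover is an isomorphism, identifies $\widetilde{Y}\times_{\widetilde{X},\widetilde{F}_X}\widetilde{X}$ with $\widetilde{Y}$ and hands you $\widetilde{F}_Y$. No complaints there.

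Part (2) has the right skeleton --- the ringed space $(|Y|,\widetilde{f}_*\cO_{\widetilde{X}})$ together with the endomorphism $\widetilde{f}_*\widetilde{F}_X^{\#}$ is indeed how the cited result is proved --- but the step you single out as hardest is the one that fails as written. For an $f$-ample $\sL$, the relative Proj of $\bigoplus_n f_*\sL^{\otimes n}$ over $Y$ is $X$, not $Y$, so your chart-building produces a lift of $X$ over the still-hypothetical $\widetilde{Y}$ rather than exhibiting $\widetilde{Y}$ itself. What you want is a bundle pulled back from an ample $\sA$ on $Y$ and the absolute section ring $\bigoplus_n H^0(\widetilde{X},\widetilde{\sN}^{\otimes n})$ over $W_2(k)$, where $\widetilde{\sN}$ lifts $(f^*\sA)^{\otimes p}$; the $p$-th power is needed because the obstruction to lifting a line bundle to $\widetilde{X}$ lives in $H^2(X,\cO_X)$ and is only killed after multiplication by $p$ --- an obstruction your proposal does not address. (Alternatively, one verifies affine-locally on $Y$ that $\widetilde{f}_*\cO_{\widetilde{X}}$ localizes correctly, using the pushed-forward flatness sequence and the five lemma.) On the fiber statement: the set of $W_2(k)$-lifts of $y$ is a nonempty torsor under $T_{Y,y}$ only when $\widetilde{Y}$ is smooth over $W_2(k)$ at $y$, and $\widetilde{f}^{-1}(\widetilde{y})$ is $W_2(k)$-flat only when $f$ is flat along $F$; neither follows from the stated hypotheses, though both hold in all of the paper's applications, where the contractions are smooth morphisms of smooth varieties. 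Finally, since $dF_Y=0$ the induced self-map of the torsor of lifts is \emph{constant}, not a translation --- a nontrivial translation of a torsor has no fixed point at all; it is the constant-map formulation that yields the unique fixed lift $\widetilde{y}$.
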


\begin{proof}
\ref{prop:f-liftable1} follows from \cite[Lemma~3.3.5]{AWZ17}.
\ref{prop:f-liftable2} follows from \cite[Theorem~3.3.6~(b)]{AWZ17} and \cite[Corollary~3.5.4]{AWZ17}.
\end{proof}

\subsection{Proof of Theorem~\ref{thm:F-liftable}}

\begin{definition}[$F$-liftable and nef tangent bundle]\label{def:FLNT}
Let $X$ be a smooth projective variety. For convenience, let us introduce the following condition:
\begin{enumerate}
\renewcommand{\labelenumi}{\rm{(FLNT)}}
\item \label{cond:FLNT} $X$ is $F$-liftable and the tangent bundle is nef.
\end{enumerate}
\end{definition}

\begin{proposition}[{\cite[Proposition~6.3.2]{AWZ17}}]\label{prop:f-liftable:rho=1}
Let $X$ be a smooth Fano variety satisfying the condition \ref{cond:FLNT}.
If $\rho_X=1$, then $X$ is isomorphic to a projective space.
\end{proposition}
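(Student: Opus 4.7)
The plan is to invoke the characterization that a smooth Fano variety $X$ of dimension $n$ with $\rho_X = 1$ is isomorphic to $\bP^n$ provided it carries a dominating family of rational curves of anticanonical degree $n+1$. The work then reduces to showing that $F$-liftability, combined with nefness of $T_X$, forces the minimal rational curves on $X$ to have this maximal possible degree.

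First I would set up the rational curve geometry. Since $X$ is Fano it is RCC, so Theorem~\ref{thm:RCC=>SRC} implies $X$ is SRC. Because $\rho_X = 1$, the cone $\cNE(X)$ consists of a single $K_X$-negative extremal ray; let $\sM$ be an unsplit family of rational curves of minimal anticanonical degree $d$ spanning this ray. By Theorem~\ref{thm:contraction}, the evaluation map $q \colon \sU \to X$ of the universal family is a smooth morphism, so through a general point of $X$ passes a free rational curve $f \colon \bP^1 \to X$ from $\sM$, and $f^*T_X = \bigoplus_{i=1}^{n} \cO_{\bP^1}(a_i)$ is a nef bundle with $a_i \geq 0$ and $\sum a_i = d$.

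The central step is to exploit $F$-liftability to force $d = n+1$. Following the strategy of \cite[\S6.3]{AWZ17}, a Frobenius lift modulo $p^2$ produces, via Deligne--Illusie type formalism, an extra structure on $\Omega_X^1$ (essentially, a splitting of $F_{X*}\Omega_X^1$ compatible with the Cartier operator). Restricting this datum to a minimal free rational curve $C$ and combining it with Proposition~\ref{prop:nef_bundle} and the positivity of $T_X$, one pins down the splitting type of $f^*T_X$ to be $\cO(2)\oplus \cO(1)^{\oplus n-1}$; equivalently, $d = n+1$. Here one can also leverage the fact that $F$-liftability descends along smooth contractions with $R^1f_*\cO = 0$ (Proposition~\ref{prop:f-liftable}), which via Theorem~\ref{thm:contraction} propagates the constraint throughout the rational curve structure on $X$.

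The main obstacle is precisely this last numerical rigidity: making explicit how the Frobenius lifting constrains the Harder--Narasimhan filtration of $T_X$ along a minimal rational curve, ruling out strictly sub-maximal splitting types such as $\cO(1)^{\oplus n}$. Once $d = n+1$ is established, the positive-characteristic characterization of projective space via dominating unsplit families of rational curves of maximal anticanonical degree (see e.g.\ \cite[Chapter~V]{Kol96}, together with the $\text{SRC}$ property already secured) yields $X \simeq \bP^n$.
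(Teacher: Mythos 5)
This proposition is not proved in the paper at all: it is quoted verbatim from \cite[Proposition~6.3.2]{AWZ17}, so the ``paper's proof'' is just that citation. Your outline does reconstruct the correct overall shape of the argument in \cite{AWZ17} (reduce to showing that a minimal dominating family of rational curves has anticanonical degree $n+1$, then invoke a characterization of $\bP^n$), but it contains a genuine gap at exactly the point where all the content lies.

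The central step --- that $F$-liftability forces the minimal degree $d$ to equal $n+1$, i.e.\ rules out splitting types such as $\cO(2)\oplus\cO(1)^{\oplus p}\oplus\cO^{\oplus q}$ with $p<n-1$ --- is asserted but never argued. You appeal to ``Deligne--Illusie type formalism'' producing ``an extra structure on $\Omega_X^1$'' and then say this ``pins down'' the splitting type, while simultaneously conceding that making this precise is ``the main obstacle.'' That obstacle \emph{is} the proposition: without it, nothing in your argument distinguishes $\bP^n$ from, say, a smooth quadric $Q^n$ ($n\geq 3$), which is Fano with nef tangent bundle, $\rho=1$, SRC, and whose minimal curves have degree $n$; the whole point is that such varieties are not $F$-liftable, and proving that requires the actual mechanism of \cite{AWZ17} (the homomorphism $\xi\colon F^*\Omega_X^1\to\Omega_X^1$ induced by a Frobenius lifting, restricted to minimal rational curves, together with a degree/positivity comparison that fails unless the positive part of $f^*T_X$ has full rank). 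Your invocation of Proposition~\ref{prop:f-liftable} and Theorem~\ref{thm:contraction} to ``propagate the constraint'' is vacuous here, since for $\rho_X=1$ there are no nontrivial contractions to descend along. A complete proof must either reproduce that computation from \cite{AWZ17} or simply cite the result, as the paper does; as written, the proposal does neither.
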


\begin{proposition}[$F$-liftable varieties with $K_X \equiv 0$]\label{prop:K=0}
Let $X$ be a smooth projective $F$-liftable variety.
If the canonical divisor $K_X$ is numerically trivial, then there exists a finite \'etale cover $f\colon Y \to X$ from an ordinary abelian variety $Y$.
\end{proposition}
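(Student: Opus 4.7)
The plan is to prove this result by analyzing the Albanese morphism $\alpha_X \colon X \to A \coloneqq \Alb(X)$ and showing that, after at most a finite \'etale cover, it identifies $X$ with an ordinary abelian variety. The argument naturally splits into three steps: transferring $F$-liftability to $A$, showing $\alpha_X$ is \'etale, and identifying $X$ (or a cover) as the desired ordinary abelian variety.

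First, I would transfer the $F$-liftability from $X$ to $A$. Given a Frobenius lifting $(\widetilde{X}, \widetilde{F_X})$ over $W_2(k)$, the Albanese construction is functorial and compatible with base change, so it produces a lifting $\widetilde{A}$ of $A$ together with a compatible lifting of Frobenius. Hence $A$ is an $F$-liftable abelian variety, and by the classical theorem of Serre--Tate, every $F$-liftable abelian variety is ordinary. Therefore $A$ is ordinary.

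Second, and this is the main step, I would show that $\alpha_X$ is \'etale, possibly after a finite \'etale cover of $X$. The key is to combine $F$-liftability with the hypothesis $K_X \equiv 0$. The Frobenius lifting induces a canonical Cartier-type splitting of $F_{X\ast}\Omega^{\bullet}_X$, which produces strong Hodge-theoretic vanishing and semipositivity for $\Omega^1_X$. Since $K_X = \det \Omega^1_X \equiv 0$, these splittings force $\Omega^1_X$ to be numerically flat, $h^0(X, \Omega^1_X) = \dim X = \dim A$, and the natural pullback $\alpha_X^{\ast}\Omega^1_A \to \Omega^1_X$ to be an isomorphism. The latter makes $\alpha_X$ have everywhere injective differential; together with properness and the fact that the image is an abelian subvariety of $A$ of full dimension, $\alpha_X$ is then finite \'etale and surjective. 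Once this step is in place, a connected finite \'etale cover of an abelian variety inherits an abelian variety structure after choosing a base point, so $X$ (or, if necessary, a further finite \'etale cover) is an abelian variety, and being \'etale over an ordinary one it is itself ordinary; take this to be $Y$.

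The main obstacle is the middle step. In positive characteristic, Kodaira-type vanishing fails in general, so the characteristic-zero Beauville--Bogomolov argument does not transfer directly, and it is not obvious that $K_X \equiv 0$ alone gives numerically flat $\Omega^1_X$. The role of $F$-liftability is precisely to furnish the Cartier-type splittings that substitute for Kodaira vanishing and thereby control $\Omega^1_X$ when $K_X \equiv 0$. In practice I would either invoke the relevant structural results from \cite{AWZ17} directly, or unpack the Cartier-splitting-plus-numerical-flatness argument carefully, taking care that the covers that arise along the way really are finite \'etale (and not merely purely inseparable), as required by the present statement.
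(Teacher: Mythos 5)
The paper does not actually prove this proposition; its ``proof'' is the single line ``See for instance \cite[Theorem~5.1.1]{AWZ17}''. Your sketch is therefore an attempt to reconstruct that argument, and while its overall shape (use $F$-liftability together with $K_X\equiv 0$ to control $\Omega^1_X$, then reduce to abelian varieties) is right, the middle step --- which you yourself identify as the main obstacle --- contains claims that are false as stated, and the idea that actually makes the argument work is missing.

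Concretely, the assertions $h^0(X,\Omega^1_X)=\dim X$ and ``$\alpha_X^{*}\Omega^1_A\to\Omega^1_X$ is an isomorphism'' cannot hold for $X$ itself: an ordinary bielliptic surface in characteristic $p\ge 5$ is a free quotient of an ordinary abelian surface by a tame finite group, hence is $F$-liftable with $K_X$ torsion, yet $h^0(\Omega^1_X)=1$ and $\dim\Alb(X)=1<2$. So all of the content is hidden in your parenthetical ``possibly after a finite \'etale cover'', and you give no mechanism for producing that cover. The mechanism in \cite{AWZ17} is: (i) the Frobenius lifting induces $\xi=\tfrac{1}{p}d\widetilde{F_X}\colon F_X^{*}\Omega^1_X\to\Omega^1_X$, whose determinant is a nonzero section of $\omega_X^{1-p}$; since $\omega_X^{1-p}\equiv 0$, that section is nowhere vanishing, so $\xi$ is an isomorphism and $\Omega^1_X$ is Frobenius-periodic --- this, and not a general ``semipositivity from Cartier-type splittings'' (which fails badly, e.g.\ for toric varieties), is where $K_X\equiv 0$ enters and why $\Omega^1_X$ is numerically flat; (ii) by Lange--Stuhler, a Frobenius-periodic bundle is trivialized on a finite \'etale cover $Y\to X$, so $T_Y$ is trivial; (iii) by Mehta--Srinivas \cite{MS87}, a smooth projective variety with trivial tangent bundle that is in addition \emph{ordinary} admits a finite \'etale cover by an ordinary abelian variety, the ordinarity being supplied by $F$-liftability of $Y$. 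Without this last ordinarity input the Albanese morphism of a variety with trivial tangent bundle can be purely inseparable, so your closing step ``injective differential $\Rightarrow$ \'etale'' is exactly the point that still needs proof. Your first step (the Albanese variety is $F$-liftable, hence ordinary) is correct and is indeed part of the standard argument, but it is the easy part.
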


\begin{proof}
See for instance \cite[Theorem~5.1.1]{AWZ17}.
\end{proof}

\begin{proposition}[FLNT Fano varieties]\label{prop:Fano:product}
Let $X$ be a smooth Fano variety satisfying the condition \ref{cond:FLNT}.
Then $X$ is isomorphic to a product of projective spaces.
\end{proposition}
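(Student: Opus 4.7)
The plan is to induct on the Picard number $\rho(X)$. For $\rho(X) = 1$, Proposition~\ref{prop:f-liftable:rho=1} immediately gives $X \cong \bP^n$, so suppose $\rho(X) \geq 2$. I would invoke Theorem~\ref{thm:RCC=>Fano} to know that $\cNE(X)$ is simplicial, and then apply Theorem~\ref{thm:contraction} to any $K_X$-negative extremal ray $R$ to produce a smooth contraction $f \colon X \to Y$; the goal is to check that both sides of this fibration fall under the inductive hypothesis and then argue that $f$ is in fact a trivial projective bundle.

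For a fiber $F$ of $f$: by Theorem~\ref{thm:contraction}, $F$ is an SRC Fano variety with nef tangent bundle, and $\rho(F)=1$ by Remark~\ref{rem:cont:RCC} (using that $X$ is RCC). Corollary~\ref{cor:SRC} gives $H^1(F,\cO_F)=0$, so $R^1 f_\ast \cO_X = 0$, and Proposition~\ref{prop:f-liftable}~\ref{prop:f-liftable2} upgrades $F$ to an $F$-liftable variety; Proposition~\ref{prop:f-liftable:rho=1} then identifies $F \cong \bP^n$. For the base $Y$: Theorem~\ref{thm:contraction} gives smoothness and nefness of $T_Y$; since $Y$ inherits the RCC property from $X$ as a contraction image, Theorem~\ref{thm:RCC=>Fano} shows $Y$ is Fano; Proposition~\ref{prop:Picard_number} gives $\rho(Y)=\rho(X)-1$; and Proposition~\ref{prop:f-liftable}~\ref{prop:f-liftable2} gives $F$-liftability of $Y$. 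Thus by induction, $Y \cong \prod_i \bP^{m_i}$.

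The main obstacle is the final splitting step: proving that the smooth $\bP^n$-fibration $f \colon X \to \prod_i \bP^{m_i}$ is trivial, so that $X \cong Y \times \bP^n$. One possible route is to show first that $f$ is a projective bundle $X = \bP_Y(\sE)$, using the vanishing of the Brauer obstruction over the simply connected product of projective spaces $Y$, and then to exploit the Frobenius lift together with nefness of $T_X$ to force $\sE$ to be a direct sum of line bundles of the same degree, by a Bott vanishing type argument applied to an appropriate endomorphism bundle. An alternative approach is to apply Corollary~\ref{cor:face} to each extremal ray $R_i$ of $X$ to obtain the contraction $h_i \colon X \to W_i$ of the complementary extremal face, observe that each $W_i$ is FLNT Fano of Picard number one (so $W_i \cong \bP^{a_i}$ by Proposition~\ref{prop:f-liftable:rho=1}), and then argue that the product morphism $(h_1,\ldots,h_m) \colon X \to \prod_i \bP^{a_i}$ is an isomorphism by exploiting that the pullbacks $h_i^\ast H_i$ of the hyperplane classes form a dual basis to the extremal rays $R_i$, so that this morphism is finite and birational onto its image, hence an isomorphism between smooth projective varieties of the same dimension.
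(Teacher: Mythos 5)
Your setup (induction on $\rho_X$, identification of the fibers as projective spaces and of the base as a product of projective spaces via Propositions~\ref{prop:f-liftable:rho=1} and \ref{prop:f-liftable}) matches the paper, but the step you yourself flag as the main obstacle --- splitting the smooth $\bP^n$-fibration --- is exactly where the real content lies, and neither of your two proposed routes closes it. The first route (``a Bott vanishing type argument applied to an appropriate endomorphism bundle'' forcing $\sE$ to split into line bundles of equal degree) is not an argument but a wish; nothing in the paper's toolkit, nor in what you cite, produces such a splitting directly. The second route contains a concrete error: for the contractions $h_i \colon X \to W_i \cong \bP^{a_i}$ of the complementary faces, there is no reason that $\sum a_i = \dim X$, so the product morphism $X \to \prod \bP^{a_i}$ need not be a map between varieties of the same dimension. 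The flag threefold $\bP(T_{\bP^2})$ is the standard counterexample: it is a Fano variety of Picard number two with nef tangent bundle, its two extremal contractions map onto $\bP^2$, and the induced map to $\bP^2\times\bP^2$ embeds it as a hypersurface, not an isomorphism. This is precisely the case your argument must rule out, and it cannot be ruled out by numerical or cone-theoretic considerations alone --- it is only excluded because $\bP(T_{\bP^n})$ fails to be $F$-liftable.

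The paper handles this differently. In the case $\rho_X=2$ it invokes Sato's classification (\cite[Theorem~A]{Sat85}) of Fano varieties with two projective-bundle structures, which leaves only $\prod$ of two projective spaces or $\bP(T_{\bP^n})$, and then kills the latter by \cite[Lemma~6.4.3]{AWZ17} ($\bP(T_{\bP^n})$ is not $F$-liftable). For $\rho_X\geq 3$ it contracts a two-dimensional extremal face $R_1+R_2$ to get $\pi\colon X\to X_{1,2}$, notes that the $\pi$-fibers are FLNT Fano of Picard number two and hence (by induction) products of two projective spaces, normalizes $\sE$ so that it is trivial on the fibers of $g_1\colon X_1\to X_{1,2}$, and uses Grauert's theorem plus the base-change isomorphism $g_1^{\ast}(g_{1\ast}\sE)\cong\sE$ to split $X$ as $\bP^{n_1}\times\bP(g_{1\ast}\sE)$. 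You would need to supply both the $\rho=2$ classification input and this descent argument (or genuine substitutes) to complete your proof.
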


\begin{proof}
We proceed by induction on the Picard number $\rho_X$.
By Proposition~\ref{prop:f-liftable:rho=1}, our assertion holds for the case $\rho_X=1$.
Assume that $\rho_X \geq 2$.
Then there exists a two dimensional extremal face, which is spanned by two extremal rays $R_1$ and $R_2$.
We denote the contraction of the extremal ray $R_i$ by $f_i\colon  X \to X_i$ ($i=1$, $2$), which is a smooth $\bP$-fibration by Theorem~\ref{thm:contraction}, Remark~\ref{rem:cont:RCC} and Proposition~\ref{prop:f-liftable:rho=1}.
By the induction hypothesis, each $X_i$ is a product of projective spaces and hence the Brauer group of $X_i$ vanishes;
this implies that each $f_i$ is given by a projectivization of a vector bundle.
When $\rho_X=2$, applying \cite[Theorem~A]{Sat85} $X$ is isomorphic to a product of two projective spaces or $\bP(T_{\bP^n})$.
However the latter does not occur, because $\bP(T_{\bP^n})$ is not F-liftable by \cite[Lemma~6.4.3]{AWZ17}.
Thus we may assume that $\rho_X \geq 3$.
Let $\pi \colon X \to X_{1,2}$ be the contraction of the extremal face $R_1+R_2$.
By the rigidity lemma \cite[Lemma 1.15]{Deb01}, there is the following commutative diagram:
\[
\xymatrix{
& X \ar[dd]_-{\pi} \ar[dl]_-{f_1} \ar[dr]^-{f_2} & \\
X_1 \ar[dr]_-{g_1} & & X_2 \ar[dl]^-{g_2} \\
&X_{1,2}& \\
}
\]

Let us take a vector bundle $\sE$ on $X_1$ such that $X \cong \bP(\sE)$ and $f_1\colon  X \to X_1$ is given by the natural projection $\bP(\sE) \to X_1$.
For any point $p \in X_{1,2}$, $\pi^{-1}(p)$ is a smooth Fano variety of Picard number two satisfying the condition \ref{cond:FLNT};
by the induction hypothesis it is a product of two projective spaces.
Thus, by tensoring $\sE$ with a line bundle, we may assume that $\sE|_{g_1^{-1}(p)}$ is trivial for any point $p \in X_{1,2}$.
By Grauert's theorem \cite[III. Corollary~12.9]{Har77}, we see that ${g_1}_\ast(\sE)$ is a vector bundle on $X_{1,2}$.
Then it is straightforward to verify that the natural map ${g_1}^{\ast}({g_1}_\ast(\sE)) \to \sE$ is an isomorphism via Nakayama's lemma. Thus we have 
\[
X=\bP(\sE)\cong \bP({g_1}^{\ast}({g_1}_\ast(\sE)))\cong X_1\times_{X_{1,2}} \bP({g_1}_\ast(\sE)).
\]
Again, by the inductive hypothesis, $X_1\cong \prod_{j=1}^{\rho_X-1} \bP^{n_j}$ and $X_{1,2}\cong \prod_{j=2}^{\rho_X-1} \bP^{n_j}$;
we also see that $X_1 \to X_{1,2}$ is a natural projection.
This concludes that $X$ is isomorphic to $\bP^{n_1}\times \bP({g_1}_\ast(\sE))$.
Since $\bP({g_1}_\ast(\sE))$ is a product of projective spaces, our assertion holds.
\end{proof}

\begin{corollary}[Structure theorem of FLNT varieties]\label{cor:str:them:1}
Let $X$ be a smooth projective variety satisfying the condition \ref{cond:FLNT}.
Then there exists a $K_X$-negative contraction $\varphi \colon  X \to M$ which satisfy the following properties:
\begin{enumerate}
 \item $M$ is an \'etale quotient of an ordinary abelian variety $A$.
 \item $\varphi$ is a smooth morphism whose fibers are isomorphic to a product of projective spaces.
\end{enumerate}
In particular, there exist an finite \'etale cover $Y \to X$ and a smooth contraction $f \colon Y \to A$ onto an ordinary abelian variety $A$ such that all $f$-fibers are isomorphic to a product of projective spaces.
\end{corollary}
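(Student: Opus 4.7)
The plan is to apply the decomposition theorem (Theorem~\ref{thm:decomposition}) to $X$ and then transfer $F$-liftability to the base and the fibers of the resulting smooth fibration, so that the previous propositions of this section identify those pieces.

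First, by Theorem~\ref{thm:decomposition}, $X$ admits a smooth contraction $\varphi \colon X \to M$ which is the MRCC fibration, whose fibers are SRC Fano varieties with nef tangent bundle and whose base $M$ has numerically flat tangent bundle. In particular $K_M \equiv 0$. To invoke Proposition~\ref{prop:f-liftable}~\ref{prop:f-liftable2} for the morphism $\varphi$ I need $R^1\varphi_*\cO_X = 0$. This follows from Corollary~\ref{cor:SRC}~(2): each (closed) fiber $F$ of $\varphi$ is SRC with nef tangent bundle, hence $H^1(F,\cO_F)=0$, and by cohomology and base change $R^1\varphi_*\cO_X$ vanishes. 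Therefore $M$ and every fiber $F$ of $\varphi$ are $F$-liftable.

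Next, each fiber $F$ is a smooth Fano variety with nef tangent bundle (Theorem~\ref{thm:decomposition}) and $F$-liftable by the previous paragraph, so Proposition~\ref{prop:Fano:product} gives $F \simeq \prod \bP^{n_j}$. On the other hand, $M$ is smooth projective, $F$-liftable, with $K_M \equiv 0$, so Proposition~\ref{prop:K=0} supplies a finite \'etale cover $A \to M$ with $A$ an ordinary abelian variety. This proves assertions (1) and (2).

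Finally, for the "In particular" statement, form the fiber product
\[
Y \coloneqq X \times_M A.
\]
Since $A \to M$ is finite \'etale, so is $Y \to X$; since $\varphi$ is smooth, the projection $f \colon Y \to A$ is smooth, and each of its fibers is isomorphic to a $\varphi$-fiber, hence to a product of projective spaces. I do not expect serious obstacles here: the only subtlety is the verification $R^1\varphi_*\cO_X = 0$, but this is immediate once Theorem~\ref{thm:RCC=>SRC} and Corollary~\ref{cor:SRC} are in hand, and the rest of the argument is a clean combination of Theorem~\ref{thm:decomposition} with Propositions~\ref{prop:f-liftable}, \ref{prop:K=0} and \ref{prop:Fano:product}.
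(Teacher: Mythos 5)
Your proposal is correct and follows essentially the same route as the paper's own proof: Theorem~\ref{thm:decomposition} to get the MRCC fibration, Corollary~\ref{cor:SRC} to obtain $H^1(F,\cO_F)=0$ and hence $R^1\varphi_*\cO_X=0$, Proposition~\ref{prop:f-liftable} to transfer $F$-liftability to $M$ and the fibers, Propositions~\ref{prop:Fano:product} and \ref{prop:K=0} to identify them, and the fiber product $Y = X\times_M A$ for the final assertion. The only difference is that you spell out the cohomology-and-base-change step explicitly, which the paper leaves implicit.
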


\begin{proof}
By Theorem~\ref{thm:decomposition}, $X$ admits a smooth fibration $\varphi \colon X \to M$ such that all fibers are smooth Fano varieties and the tangent bundle of $M$ is numerically trivial.
Since Corollary~\ref{cor:SRC} implies that $H^1(F, \cO_F) = 0$ for all fiber $F$ of $\varphi$, we have $R^1\varphi _* \cO_X =0$.
Hence all fibers $F$ and the image $M$ are $F$-liftable by Proposition~\ref{prop:f-liftable}.
In particular $F$ is isomorphic to a product of projective spaces and $M$ is an \'etale quotient of an ordinary abelian variety $A$.

Set $Y \coloneqq X \times_M A$ and let $f \colon Y \to A$ be the natural projection.
Then the last assertion follows.
\end{proof}

\begin{proof}[Proof of Theorem~\ref{thm:F-liftable}]
By Corollary~\ref{cor:str:them:1}, there exist a finite \'etale cover $\tau_1\colon  Y_1 \to X$ and a smooth $(\prod \bP^{n_i})$-fibration $f_1\colon  Y_1 \to A_1$ over an ordinary abelian variety.
We may find a finite \'etale cover $\tau_2\colon  Y \to Y_1$ such that the composite $ Y \to Y_1 \to  X$ is an \'etale Galois cover (see for instance \cite[Proposition~5.3.9]{Sza09}).
We denote the Albanese morphism by $\alpha \coloneqq \alpha_{Y} \colon  Y \to A \coloneqq \Alb(Y)$.
By the universal property of the Albanese variety, we obtain a morphism $\tau_3 \colon A \to A_1$ which satisfies the following commutative diagram:
\[\xymatrix{
Y\ar[d]_{\alpha} \ar[r]^{\tau_2} & Y_1 \ar[d]^{f_1} \ar[r]^{\tau_1} & X\\
A \ar[r]_{\tau_3} & A_1 & \\
}\]

Here we prove that $\tau_3$ is \'etale.
All $f_1$-fibers are isomorphic to $\prod \bP^{n_i}$ and hence simply connected.
Thus each $f_1 \circ \tau_2$-fiber is a disjoint union of $\prod \bP^{n_i}$.
In particular, $f_1 \circ \tau_2$-fibers are contracted by $\alpha$.
Therefore $\tau_3$ is finite.
Since $f_1 \circ \tau_2$ is surjective, the morphism $ \tau_3$ is also surjective.
In particular $\tau_3$ is a finite surjective morphism  between abelian varieties.
This in turn implies that $\alpha$ is surjective with equidimensional fibers.
Thus $\alpha$ is flat.
Since $\tau_3 \circ \alpha$ is smooth, $\tau_3$ is \'etale.

Since $\tau _3$ is \'etale and $\tau_3 \circ \alpha$ is smooth, the morphism $\alpha$ is smooth.
Thus it is enough to show that $\alpha$-fibers are connected.
To prove this, let us consider the Stein factorization $Y \xrightarrow{q_1} A' \xrightarrow{q_2} A$, where $q_1$ is a contraction and $q_2$ is a finite morphism. Since $q_2$ is \'etale, $A'$ is an abelian variety (see for instance \cite[Section~18]{Mum70}). This implies that $q_1\colon  Y \to  A'$ factors through $\alpha \colon Y \to A$, that is, there exists a morphism $\beta \colon A\to  A'$ such that $q_1 =\beta \circ \alpha$.
By virtue of the universality of $A$ and the rigidity lemma \cite[Lemma 1.15]{Deb01} for $q_1$, we see that $\beta$ is an isomorphism and the assertion follows.
\end{proof}

\bibliographystyle{amsalpha}
\bibliography{biblio}
\end{document}